\newtheorem{theorem}{Theorem}
\newtheorem{assumption}{Assumption}
\newtheorem{lemma}{Lemma}
\newtheorem{proposition}{Proposition}
\newtheorem{definition}{Definition}
\newtheorem{corollary}{Corollary}
\newtheorem{remark}{Remark}
\newcommand{\bx}{\boldsymbol{x}}
\newcommand{\by}{\boldsymbol{y}}
\newcommand{\re}{\varepsilon}
\renewcommand{\vec}[1]{\boldsymbol{#1}}
\title{What does it mean for a 3D star-shaped scatterer to be small in the time domain?}
\author{M. Kachanovska, A. Savchuk}
\date{}
\begin{document}

	\maketitle\begin{abstract}
In the frequency domain wave scattering problems, obstacles can be effectively replaced by point scatterers as soon as the wavelength of the incident wave exceeds significantly their diameter. The situation is less clear in the time domain, where recent works suggest the presence of an additional temporal scale that quantifies the smallness of the obstacle. In this paper we argue that this is not necessarily the case, and that it is possible to construct asymptotic models with an error that does not deteriorate in time, at least in the case of a sound-soft scattering problem by a star-shaped obstacle in 3D.  
	\end{abstract}
	
\section{Introduction and problem setting}
	\subsection{Introduction}
	This work is motivated by a question and analysis posed in the recent paper by P. Martin \cite{martin}: when considering wave scattering by a particle in the time domain, what does it mean for a particle to be 'small'? Limiting the discussion to the three-dimensional case,  it is well-known that in the frequency domain, for the problem of wave scattering of a wave of wavelength $\lambda$ by a particle of characteristic size $\varepsilon$, one can effectively replace the particle by a point scatterer as soon as $\varepsilon\ll \lambda$, cf. \cite[Section 8.1]{martin_monograph}. Under these conditions, one can expect the asymptotic model to produce an absolute error in the scattered far-field of $O(\varepsilon(\varepsilon/\lambda)^p)$, for some $p\geq 0$, with hidden constants independent of $\lambda$ and $\varepsilon$. Since the magnitude of the scattered field is $O(\re)$, a particle is 'small' if $\varepsilon\ll \lambda$. 
	
	 In the time-domain, there are two parameters: the smallest wavelength $\lambda_{min}$ of the approximately band-limited signal and the final simulation time $T$. In other words, one can expect that the absolute $L^{\infty}(0,T)$-error of an asymptotic model behaves as $O(\varepsilon (\varepsilon/\lambda_{min})^p T^q)$, where $q>0$, see  e.g. \cite{mk_as}.  Although in many works, cf. \cite{sini_wang_yao} for the Foldy-Lax model for scattering by a cluster of small particles, \cite{barucq} for scattering by a small sphere, \cite{phd_matessi} by a small particle in $\mathbb{R}^3$, or  \cite{korikov} by a small particle inside a bounded domain, the estimates are stated without explicit dependence in the final time, from the proofs of these estimates one can deduce the degeneration of the error bounds with time as $t\rightarrow +\infty$.
		 
	Considering a more general class of asymptotic time-dependent problems, the estimates of the above type were also obtained in the context of Minnaert-type resonances in \cite[Theorem 1.1]{li_sini}, \cite[Theorem 5.3] {mantile_posilicano}. An alternative approach is taken in the paper \cite{baldassari_millien_vanel} modelling plasmonic resonances. There, the approximations are based on modal expansions; in this case an extra scale is introduced by an abrupt truncation of the Fourier-transformed signal in frequency. The error estimates are then  based on an interplay between the simulation time, size of the particle and the cut-off frequency. 
	
	Importantly, all these estimates seem to indicate that one has to assign an additional temporal scale to define the 'smallness' of the particle. In other words, we expect the particle to be small if $\re\ll \lambda T^{-\alpha}$, for some $\alpha>0$, where $T$ is a final simulation time. 
	 
	 In this work we concentrate on sound-soft scattering by a small particle, and prove that such a deterioration of the error in time is not intrinsic to asymptotic models that replace a particle by a point scatterer, and that it is possible to construct a model with an error not deteriorating in time, at least in the situation of a single scatterer. The mathematical meaning of 'not deteriorating in time' will be explained in the course of the article. The reason for this is that, roughly speaking, in 3D, the local energy of the scattered field decays fast, and constructing an accurate asymptotic model mimicking such a behaviour is not a difficult task. We provide quantitative bounds on the local energy of the field scattered by a small particle and the error of the field when approximating the particle by a point source valid for long times.
\subsection{Problem setting}
	\label{sec:pb_setting}
	Let $\Omega$ be a smooth domain in $\mathbb{R}^3$ and  $\Omega^c:=\mathbb{R}^3\setminus\overline{\Omega}$ its exterior. We assume that $\Omega\subseteq B_1(\vec{0})$, which implies that $\operatorname{diam}\Omega\leq 1$. Additionally, let $\Omega^c$ be connected. Without loss of generality, let $\vec{0}\in \Omega$. The family of the domains $\Omega^{\re}$ is defined by applying a contraction to $\Omega$: $$\Omega^{\varepsilon}:=\{\vec{x}^{\varepsilon}=\varepsilon\vec{x}, \; \vec{x}\in \Omega\}, \quad \Gamma^{\re}:=\partial\Omega^{\re}, \, 0<\varepsilon\leq 1.$$
	We study the wave equation, posed in the exterior $\Omega^{\varepsilon,c}=\mathbb{R}^3\setminus\overline{\Omega^{\re}}$, more precisely, we look for $u^{\re},\, v^{\re}: \mathbb{R}_{\geq 0}\times \Omega^{\re,c}\rightarrow \mathbb{R}$, s.t. 
	\begin{align}
		\label{eq:wave}
		\begin{split}
		&\partial_t u^{\re}- v^{\re}=0,\qquad 
		\partial_t v^{\re}-\Delta u^{\re}=0\text{ in }\mathbb{R}_{>0}\times \Omega^{\re,c},\\
		&\gamma_0^{\re}u^{\re}\equiv \left. u^{\re}\right|_{\Gamma^{\re}}=0,\quad
		u^{\re}(0,\vec{x})=u_0(\vec{x}), \quad  v^{\re}(0,\vec{x})=v_0(\vec{x}).
		\end{split}
	\end{align}
The initial data $u_0, \, v_0$ is sufficiently regular, defined in all of $\mathbb{R}^3$, but supported away from the scatterer.  
More precisely, let $R_0>r_0>1$, and $\mathcal{O}_0:=B_{R_0}(0)\setminus \overline{B_{r_0}(0)}$ be a spherical shell enclosing $\overline{\Omega}$. We assume that, with a regularity exponent $k_{reg}\in \mathbb{N}_0$,
	\begin{align}
		\label{eq:ic}
	v_0\in H^{k_{reg}}(\mathbb{R}^3), \quad u_0\in H^{k_{reg}+1}(\mathbb{R}^3), \quad \operatorname{supp}u_0,\; \operatorname{supp}v_0\subset \mathcal{O}_0.
	\end{align}
The above implies in particular that  $\left. u_0\right|_{\Omega^{\re,c}}\in H^{k_{reg}+1}_0(\Omega^{\re,c})$. 

In what follows, we will employ the notation $\vec{u}^{\re}:=\left(\begin{matrix}
	u^{\re}\\
	v^{\re}
\end{matrix}\right)$, $\vec{u}_{0}:=\left(\begin{matrix}
u_{0}\\
v_{0}
\end{matrix}\right)$. 
We have the following energy identity, obtained by testing the first equation in \eqref{eq:wave} by  $\Delta u^{\re}$, the second equation by $v^{\re}$, and next integrating by parts:
\begin{align*}
\frac{d}{dt}{E}^{\re}=0, \text{ where } {E}^{\re}(t):=\frac{1}{2}\left(\|\nabla u^{\re}(t)\|^2_{L^2(\Omega^{\re,c})}+\|v^{\re}(t)\|_{L^2(\Omega^{\re,c})}^2\right).
\end{align*}
Thus, the total energy remains constant. On the other hand, the local energy
\begin{align*}
{E}^{\re}(\vec{u}^{\re}(t); K):=\frac{1}{2}\left(\|\nabla u^{\re}(t)\|^2_{L^2(K)}+\|v^{\re}(t)\|_{L^2(K)}^2\right)
\end{align*}
defined on a \textit{bounded} set $K\subset \mathbb{R}^3$ is known to behave differently. From the deep results of \cite{burq}, it follows that for sufficiently regular initial data ${E}^{\re}(\vec{u}^{\re}(t); K)$ decays as $t\rightarrow +\infty$, with a rate of decay being defined by the geometry of the obstacle and the regularity of the data. In particular, when in \eqref{eq:ic} $k_{reg}=1$, it holds that
$
	{E}^{\re}(\vec{u}^{\re}(t); K)\leq C(K;\Omega^{\re})\log^{-1}t.$ 
 This estimate can be improved for star-shaped obstacles \cite{morawetz, lax_phillips,melrose} to $	{E}^{\re}(\vec{u}^{\re}(t); K)\leq C(K;\Omega^{\re})\mathrm{e}^{-c_{\re}t}.$ 

In all these cases the dependence of the estimates on the parameter $\varepsilon$ does not seem to be explicit. On the other hand, it is expected that when $\varepsilon\rightarrow 0$, the field $\vec{u}^{\re}(\vec{x},t)$ should approach the solution $\vec{u}^{inc}$ of the initial-value problem \eqref{eq:wave}  posed in the space $\mathbb{R}^3$ (i.e. the obstacle 'disappears'), for which the strong Huygens principle holds true: the local energy  $E^{\re}(\vec{u}^{inc}(t); K)=0$ for $t>t_K>0$. Thus, the effect of the obstacle on the time-dependence of the field $\vec{u}^{\re}$ can be  measured by studying the scattered field, namely the difference between the solution of the Cauchy-problem in the free space and the solution to \eqref{eq:wave}. We formalize it below.

First, let $\vec{u}^{inc}: \, \mathbb{R}_{\geq 0}\times \mathbb{R}^3\rightarrow \mathbb{R}$ be a solution to the free space wave equation:
\begin{align}
	\label{eq:incident}
	\begin{split}
		&\partial_t u^{inc}- v^{inc}=0,\qquad \partial_t v^{inc}-\Delta u^{inc}=0\text{ in }\mathbb{R}_{>0}\times \mathbb{R}^3,\\
		&u^{inc}(0,\vec{x})=u_0, \quad  v^{inc}(0,\vec{x})=v_0.
	\end{split}
\end{align}
The scattered field $\vec{u}_{sc}^{\re}:=\vec{u}^{\re}-\left.\vec{u}^{inc}\right|_{\Omega^{\varepsilon,c}}$ then solves the boundary-value problem:
\begin{align}
	\label{eq:scattered_field}
	\begin{split}
		&\partial_t u^{\re}_{sc}- v^{\re}_{sc}=0,\qquad 
		\partial_t v^{\re}_{sc}-\Delta u^{\re}_{sc}=0\text{ in }\mathbb{R}_{>0}\times \Omega^{\re,c},\\
		&\gamma_0^{\re}u^{\re}_{sc}=-\gamma_0^{\varepsilon} u^{inc},\qquad
		u^{\re}_{sc}(0,\vec{x})=0, \quad  v^{\re}_{sc}(0,\vec{x})=0.
	\end{split}
\end{align}
\subsection{Questions studied in the paper}
In this manuscript we will restrict our attention (see Remark \ref{rem:reason}) to a sub-class of obstacles satisfying a so-called non-trapping assumption, see Proposition 13.3.1 in \cite{petkov_stoyanov}.
\begin{assumption}
	\label{assumption:star_shaped}
	$\Omega$ is a $C^{\infty}$ star-shaped obstacle; more precisely, there exists $\vec{x}_0\in \Omega$, s.t. for all $\hat{s}\in \mathbb{S}^2$ there exists a unique  $r_{\hat{s}}>0$, s.t. $\vec{x}_0+r_{\hat{s}}\hat{s}\in \partial\Omega$. Moreover, we assume additionally that the normal curvature of $\Omega$ does not vanish to an infinite order in any direction.
\end{assumption}
We fix $R_{\operatorname{ff}}>r_{\operatorname{ff}}>1$, and define the far-field region
\begin{align*}
K_{\operatorname{ff}}:=B_{R_{\operatorname{ff}}}(\vec{0})\setminus\overline{ B_{r_{\operatorname{ff}}}(\vec{0})}, \text{ so that }\operatorname{dist}({\Omega}^{\re}, K)\geq r_{\operatorname{ff}}-\re.
\end{align*}
\textbf{Question 1. How does the rate of decay of the $L^2$-norm of the solution $\|u^{\re}_{sc}(t)\|_{L^2(K_{\operatorname{ff}})}$ (i.e. measured locally) depend on $t$ and $\re$? }

 Our answer to this question is given in Theorem \ref{theorem:answer1}. Remark that in what follows we concentrate solely on one component $u^{\re}_{sc}$  of the field $\vec{u}^{\re}_{sc}$.

Recent research in the field of time-domain asymptotic methods has given rise to new models, cf. e.g. \cite{sini_wang_yao, mk_as,barucq,phd_matessi}, for the problem \eqref{eq:wave}. Such models allow to simplify the computations significantly, cf. \cite{barucq}, by incorporating  important information about the dynamics of the problem \eqref{eq:wave}, however, up to our knowledge, the question of the long-time behaviour of the errors has not so far been addressed for any of the works. We do so for the one-particle asymptotic model proposed in \cite{sini_wang_yao}. Denoting by $\vec{u}^{\re}_{app}$ the solution given by an asymptotic model, and by ${e}^{\re}:= {u}^{\re}_{app}-{u}^{\re}$ the error between the $u^{\re}$-components of the exact and approximate solution, we aim at answering the following question. 

\textbf{Question 2. How does the local error of the solution  $\|{e}^{\re}_{\operatorname{app}}\|_{L^2({K_{\operatorname{ff}}})}$ depend on $t$ and $\re$?}

An answer to this question can be found in Theorem \ref{theorem:answer2}.

This paper is organized as follows. In Section \ref{sec:lcl} we describe a strategy to estimate the local energy decay with the help of the resolvent estimates in the Laplace domain. Next we obtain a suitable estimate on the resolvent; the key ingredients are existing estimates in the $\re$-independent case, a scaling argument and a suitable Krein-type resolvent formula; those estimates are then translated into the time domain. Section  \ref{sec:asymptotic} is dedicated to derivation of such estimates for two chosen asymptotic models.
\section{Local energy decay estimates}
\label{sec:lcl}
Let us define the energy of the initial data, which quantifies the regularity of the incident field: 
\begin{align*}
	E_0^{k_{reg}}:=	\frac{1}{2}\left(\|u_0\|_{H^{k_{reg}+1}(\mathbb{R}^3)}^2+\|v_0\|_{H^{k_{reg}}(\mathbb{R}^3)}^2\right).
\end{align*}
The key result of this section reads. 
\begin{theorem}\label{theorem:answer1}
Assume that $k_{reg}\geq 6$. 
There exists $\re_0>0$, s.t. for all $0<\varepsilon<\re_0$, for the problem described in Section \ref{sec:pb_setting}, the following energy decay bounds on the scattered field hold true. With $\gamma_{\Omega}>0$ that depends on the obstacle only, the scattered field measured in the far-field region $K_{\operatorname{ff}}$ satisfies, with  $t=R_0+R_{\operatorname{ff}}+\tau$, 
\begin{align*}
	\|u^{\re}_{sc}(t)\|_{L^{2}(K_{\operatorname{ff}})}\leq 		{C}_{\operatorname{ff}}	\left\{
	\begin{array}{ll}
	 \re   \sqrt{E_0^{k_{reg}}},& \tau\leq 0,	 		\\	
	 \re^{k_{reg}-1}\mathrm{e}^{-\frac{\gamma_{\Omega}}{\re}\tau}\sqrt{E_0^{k_{reg}}}, &\tau>0,
	\end{array}
	\right.
\end{align*}
where the constant $C_{\operatorname{ff}}$ depends on $R_{\operatorname{ff}}$, $r_{\operatorname{ff}}$, $R_0$, $r_0$, $k_{reg}$, $\gamma_{\Omega}$, but is independent of $\varepsilon$, $t$. 
\end{theorem}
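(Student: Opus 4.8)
The plan is to work in the Laplace domain, exploiting the fact that the scattered field solves a wave equation with zero initial data and boundary data $-\gamma_0^\varepsilon u^{inc}$, and that this boundary data is itself controlled by the free-space solution, which by the strong Huygens principle is supported in an annulus expanding at unit speed. First I would Laplace-transform \eqref{eq:scattered_field} in time, obtaining for each frequency $s$ with $\operatorname{Re} s > 0$ a Helmholtz-type resolvent problem on $\Omega^{\varepsilon,c}$ with Dirichlet data $\widehat{g}^\varepsilon(s) = -\gamma_0^\varepsilon \widehat{u^{inc}}(s)$. The solution can be written via a scattering (Krein-type) resolvent formula relating the exterior resolvent of the obstacle $\Omega^\varepsilon$ to the free resolvent. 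The central analytic input is a bound on this resolvent, holomorphic in a region $\operatorname{Re} s \geq -\gamma_\Omega/\varepsilon$ with at most polynomial growth in $|s|$, with constants explicit in $\varepsilon$. Here I would invoke the known $\varepsilon$-independent non-trapping resolvent estimates (Morawetz/Lax--Phillips/Melrose, valid under Assumption \ref{assumption:star_shaped}) for the fixed obstacle $\Omega$, and transfer them to $\Omega^\varepsilon$ by the scaling $\vec{x} \mapsto \vec{x}/\varepsilon$, which maps frequency $s$ to $\varepsilon s$ and produces the factor $\gamma_\Omega/\varepsilon$ in the width of the resolvent-free strip and powers of $\varepsilon$ in the prefactors.

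Next I would quantify the smallness of the boundary data. Since $\operatorname{supp} u_0, \operatorname{supp} v_0 \subset \mathcal{O}_0$ and $r_0 > 1 \geq \operatorname{diam}\Omega \geq \varepsilon\operatorname{diam}\Omega$, the free-space solution $u^{inc}$ vanishes near $\Omega^\varepsilon$ until time roughly $r_0 - \varepsilon$, and $\gamma_0^\varepsilon u^{inc}$ is, for each fixed time, a trace over a set of diameter $O(\varepsilon)$ of a function whose $H^{k_{reg}+1}$-type norm is controlled by $\sqrt{E_0^{k_{reg}}}$; a Taylor expansion around $\vec{0}$ together with $u^{inc}(t,\vec{0})$ being smooth gives that the relevant norm of $\widehat{g}^\varepsilon(s)$ picks up a factor $\varepsilon^{k_{reg}-1}$ (one loses a couple of powers relative to $k_{reg}+1$ for trace and derivative bookkeeping, matching the exponent in the statement). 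Combining the resolvent bound with this data bound yields, for each $s$ on the contour $\operatorname{Re} s = -\gamma_\Omega/\varepsilon$, a bound on $\|\widehat{u^\varepsilon_{sc}}(s)\|_{L^2(K_{\operatorname{ff}})}$ of the form $\varepsilon^{k_{reg}-1}(\text{polynomial in }|s|)\sqrt{E_0^{k_{reg}}}$, where the polynomial degree is absorbed by the regularity $k_{reg} \geq 6$ (this is why that threshold appears).

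The final step is the inverse Laplace transform / contour-shift argument. By analyticity of $\widehat{u^\varepsilon_{sc}}(s)$ in $\operatorname{Re} s > -\gamma_\Omega/\varepsilon$ and the polynomial bounds, I would shift the Bromwich contour from $\operatorname{Re} s = c > 0$ to $\operatorname{Re} s = -\gamma_\Omega/\varepsilon$, picking up a factor $\mathrm{e}^{-(\gamma_\Omega/\varepsilon)t}$; more precisely, since the scattered field near $K_{\operatorname{ff}}$ cannot be nonzero before the incident wave has travelled from $\mathcal{O}_0$ to the obstacle and the scattered wave back out to $K_{\operatorname{ff}}$, I would use the causal delay $t = R_0 + R_{\operatorname{ff}} + \tau$ to reorganize the estimate: for $\tau \leq 0$ the field is small simply because it is still essentially zero (hence the crude $O(\varepsilon)$ bound from, e.g., the energy identity and trace smallness without the exponential), and for $\tau > 0$ the contour shift produces the claimed $\varepsilon^{k_{reg}-1}\mathrm{e}^{-(\gamma_\Omega/\varepsilon)\tau}\sqrt{E_0^{k_{reg}}}$. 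The main obstacle I anticipate is the first step: obtaining the resolvent estimate on $\Omega^\varepsilon$ with fully explicit $\varepsilon$-dependence, in particular verifying that the Krein resolvent formula does not hide $\varepsilon$-dependent constants that would spoil the exponent, and controlling the growth of the resolvent on the shifted contour uniformly down to $\operatorname{Re} s = -\gamma_\Omega/\varepsilon$ — this is where the scaling argument and the non-vanishing-curvature hypothesis do the real work.
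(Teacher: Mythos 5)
Your overall route—Laplace transform of the scattered-field problem, a Krein-type resolvent formula, transfer of the $\varepsilon$-independent non-trapping resolvent bounds for $\Omega$ to $\Omega^{\varepsilon}$ by the scaling $\vec{x}\mapsto\vec{x}/\varepsilon$ (giving analyticity in $\Re s>-c_{\Omega}/\varepsilon$ and the width $\gamma_{\Omega}/\varepsilon$), and a shift of the Bromwich contour to $\Re s=-\gamma_{\Omega}/\varepsilon$—is the same as the paper's. The genuine gap is your explanation of where the factor $\varepsilon^{k_{reg}-1}$ comes from. You attribute it to a spatial Taylor expansion of the boundary data $\gamma_0^{\varepsilon}\widehat{u^{inc}}$ around $\vec{0}$. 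This cannot work: $\widehat{u^{inc}}(s,\vec{0})$ does not vanish, so the trace on $\Gamma^{\varepsilon}$ consists of a constant part of size $O(\varepsilon)$ (coming only from the surface measure, cf.\ Proposition \ref{prop:nz_aux}) plus a mean-free part of size $O(\varepsilon^{3/2})$; a Taylor expansion buys at most one extra power of $\varepsilon$, never $k_{reg}-1$ of them. Indeed, if the data were $O(\varepsilon^{k_{reg}-1})$ in the relevant norm, the scattered field would be that small \emph{uniformly in time}, contradicting the sharp $O(\varepsilon)$ behaviour for $\tau\le 0$ (which is consistent with the frequency-domain asymptotics). In the paper, the Laplace-domain bound on $\chi_{r,R}\mathcal{D}^{\varepsilon}_D(s)\chi_{R_0}$ carries only a single power of $\varepsilon$ (Proposition \ref{prop:bounds}); the remaining $\varepsilon^{k_{reg}-2}$ is produced by the contour integral itself. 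The representation formula is written with the weight $(\mu_*-s)^{-p}$, $p=k_{reg}$, the data being replaced by $(\mu_*-\mathbb{B})^{p}\vec{u}_0$ (this is where the regularity hypothesis is spent, cf.\ Remark \ref{rem:bmap}); on the shifted line every frequency satisfies $|s|\gtrsim\gamma_{\Omega}/\varepsilon$, so the frequency integral $S^{\varepsilon}_p$ of $|\mu_*+\gamma_{\Omega}/\varepsilon+i\omega|^{-p}\max(|\gamma_{\Omega}/\varepsilon+i\omega|,1)(1+|\omega\varepsilon|)^{3}$ is $\lesssim\varepsilon^{p-2}$, and its convergence requires $p>5$, which is exactly why $k_{reg}\ge 6$ appears—not ``trace and derivative bookkeeping.'' Without this mechanism your argument yields at best $\varepsilon\,\mathrm{e}^{-\gamma_{\Omega}\tau/\varepsilon}$ for $\tau>0$, and moreover the polynomial growth of the resolvent bound in $|s|$ on the shifted contour is not integrable unless it is beaten by the decay furnished by the data's regularity.

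Two smaller corrections. First, for $\tau\le 0$ the field in $K_{\operatorname{ff}}$ is \emph{not} ``essentially zero'': for times between roughly $r_0+r_{\operatorname{ff}}$ and $R_0+R_{\operatorname{ff}}$ the scattered wave has already reached $K_{\operatorname{ff}}$ and is genuinely of size $O(\varepsilon)$; the paper obtains the $O(\varepsilon)$ bound from the same resolvent-difference estimate evaluated on the contour $\Re s=0$, not from causality or a global energy identity (which would not give the correct $\varepsilon$-scaling in the far field). Second, the delay $R_0+R_{\operatorname{ff}}$ does not need a separate causality argument: it is encoded in the factors $\mathrm{e}^{|\Re s|_{-}(R_0+R+\widetilde{c}^{+}_{\Omega}\varepsilon)}$ of the explicit free-resolvent kernel bounds, which on the shifted contour partially offset $\mathrm{e}^{-\gamma_{\Omega}t/\varepsilon}$ and leave exactly $\mathrm{e}^{-\gamma_{\Omega}\tau/\varepsilon}$.
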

Let us comment on the above result. For small times, the scattered field behaves as $O(\re)$, which is consistent with the known asymptotic frequency-domain behaviour \cite[Section 8.2]{martin_monograph}. Here is what happens for longer times. The wave emitted by the source  $\vec{u}_0$ located in $\mathcal{O}_0=B_{R_0}\setminus\overline{B_{r_0}}$, reaches the boundary of the small obstacle at the time $t\approx R_{0}$ (remark that the speed of the wave propagation is $1$), next gets reflected and reaches the farthest observation point on $\partial B(\vec{0},R_{\operatorname{ff}})$ at $t_*=R_0+R_{\operatorname{ff}}+O(\re)$, where $O(\re)$ accounts for the size of the obstacle, and finally decays exponentially fast with $(t-t_*)$. The decay rate depends on the obstacle size, and is more pronounced as $\re\rightarrow 0$. In other words, the behaviour is getting close to the free space case, where the wave should vanish after this point in time due to the Huygens principle. To understand the exponential decay rate stated in the theorem, we remark that $\mathrm{e}^{-\frac{\gamma_{\Omega}}{\re}(t+\frac{1}{\gamma_{\Omega}}m\re\log \re)}=\mathrm{e}^{-\frac{\gamma_{\Omega}}{\re}(t+\frac{1}{\gamma_{\Omega}}m\re\log \re^{-1})}=\re^m\mathrm{e}^{-\frac{\gamma_{\Omega}}{\re}t}$, in other words, after the time $m\re\log\re^{-1}$ the wave becomes by a factor of $O(\re^m)$ smaller. An interesting by-product of the analysis is the effect of the regularity of the field on the subsequent behaviour of the wave, which shows that higher regularity fields seem to have a significantly smaller amplitude after $t_*$ than fields of a lower regularity. 
\begin{remark}
	\label{rem:reason}
	Up to the best of the authors' knowledge, in the literature most estimates of such type are obtained using either of the two approaches: based on the  Morawetz estimates \cite{morawetz_first,morawetz,morawetz_ralston_strauss,lax_morawetz_phillips,lax_morawetz_phillips2} (for star-shaped obstacles), or by Laplace contour deformation techniques in the spirit of \cite{burq}, that, however, rely on the use of bounds on the cut-off resolvent of the underlying operator. Obtaining such bounds in the case when scatterers are 'trapping' relies on microlocal/semi-classical analysis techniques, and optimal estimates depend on the region $K$ where the scattered wave is measured, see \cite{burq2}. The effect of trapping can be shown to be less pronounced in the far-field then in the near-field. We do not believe that quite elementary techniques used in this paper allow to account properly for such subtle effects in this case, but think that our bounds are close to optimal in the non-trapping case of Assumption \ref{assumption:star_shaped}.
\end{remark}
\subsection{Strategy of the proof of Theorem \ref{theorem:answer1} and auxiliary notation}
We follow the following approach as used e.g. in  \cite{burq}. We analyze the analyticity properties of the Laplace transform of the scattered field and exploit them in a deformation of the inverse Laplace integral, to obtain close to optimal in the time domain estimates. These analyticity properties depend in particular on the geometry of the domain $\Omega^{\varepsilon}$. 
\subsubsection{Representation formula}
Let us define a family of operators on the space $V^{\re}:=H^1_0(\Omega^{\re,c})\times L^2(\Omega^{\re,c})$:
\begin{align*}
	\mathbb{B}^{\re}:=\left(
	\begin{matrix}
		0& \operatorname{Id}\\
		\Delta & 0
	\end{matrix}
	\right),\quad D(\mathbb{B}^{\re})=\{\vec{v}=\left(\begin{matrix}
		v_1\\
		v_2 \end{matrix}\right)\in V^{\re}: \, \Delta v_1\in L^2(\Omega^{\re,c}), \, v_2\in H^1_0(\Omega^{\re, c})\}.
\end{align*}
The problem \eqref{eq:wave} satisfied by $\vec{u}^{\re}$ rewrites as a Cauchy problem
\begin{align*}
	&\frac{d}{dt}\vec{u}^{\re}-\mathbb{B}^{\re}\vec{u}^{\re}=0,\quad t>0, \quad \vec{u}^{\re}(0)=\left.\vec{u}_0\right|_{\Omega^{\re,c}}.
\end{align*}
Applying the Laplace transform, defined as follows for $\varphi\in L^1(0,\infty)$,
\begin{align*}
	\hat{\varphi}(s):=\int_0^{\infty}\mathrm{e}^{-st}\varphi(t)dt, \quad \Re s\geq 0,
\end{align*}
to the above expression yields the problem satisfied by the Laplace transform of $\hat{\vec{u}}^{\re}$:
\begin{align*}
	&(s-\mathbb{B}^{\re})\hat{\vec{u}}^{\re}=\left.\vec{u}_0\right|_{\Omega^{\re,c}}.
\end{align*}
Our starting point is the following representation formula for the solution of \eqref{eq:wave}: 
\begin{align}
	\label{eq:representation_formula}
	\vec{u}^{\re}(t)=\frac{1}{2\pi i}\int_{i\mathbb{R}+\mu}\frac{\mathrm{e}^{st}}{(\mu_*-s)^p}(s-\mathbb{B}^{\re})^{-1}(\mu_*-\mathbb{B}^{\re})^p\left.\vec{u}_0\right|_{\Omega^{\re,c}}ds, \quad 0<\mu<\mu_*
\end{align}
where $p>1$, 
as proven in \cite{burq} or \cite[Lemma 3.3]{vacossin}. Taking formally $p=0$ in the above, we recognize the Bromwich inversion formula. Choosing $p>1$,  provided that the initial conditions are sufficiently regular, i.e. $k_{reg}\geq p$ in \eqref{eq:ic}, ensures  convergence of the above integral as a Bochner integral of a function from $V^{\re}$.

We would like to rewrite this representation formula for the scattered field. We will start with the incident field. Let us introduce a family of operators on the space which we denote with an abuse of notation by  $V^{0}:=H^1_0(\mathbb{R}^3)\times L^2(\mathbb{R}^3)$:
\begin{align*}
	\mathbb{B}:=\left(
	\begin{matrix}
		0& \operatorname{Id}\\
		\Delta & 0
	\end{matrix}
	\right),\quad D(\mathbb{B})=\{\vec{v}=\left(\begin{matrix}
		v_1\\
		v_2 \end{matrix}\right)\in V^0: \, \Delta v_1\in L^2(\mathbb{R}^3), \, v_2\in H^1_0(\mathbb{R}^3)\}.
\end{align*}
Then the incident field can be written analogously to \eqref{eq:representation_formula}:
\begin{align}
	\label{eq:representation_formula_inc}
	\vec{u}^{inc}(t)=\frac{1}{2\pi i}\int_{i\mathbb{R}+\mu}\frac{\mathrm{e}^{st}}{(\mu_*-s)^p}(s-\mathbb{B})^{-1}(\mu_*-\mathbb{B})^p\vec{u}_0ds, \quad 0<\mu<\mu_*, \quad p>1.
\end{align}
We will compare the total field to the incident field; since the latter one is defined on the $\mathbb{R}^3$ and the former only on $\Omega^{\re,c}$, we introduce the operator of extension by zero:  
\begin{align}
	\mathbb{E}^{\re}: \, L^2(\Omega^{\re,c})\rightarrow L^2(\mathbb{R}^3), \quad \mathbb{E}^{\re}u=u\mathbb{1}_{\Omega^{\re,c}}.
\end{align}
Similarly, let $\mathbb{R}^{\re}$ be its left-inverse, namely, an operator of restriction to $\Omega^{\re,c}$, i.e. 
\begin{align*}
	\mathbb{R}^{\re}:\, L^2(\mathbb{R}^3)\rightarrow L^2(\Omega^{\re,c}), \quad \mathbb{R}^{\re}u=\left. u\right|_{\Omega^{\re,c}}. 
\end{align*}
With these notations, it holds that (recall that $\mathbb{B}$ and $\mathbb{B}^{\re}$ are local operators, expressed via the Laplacian): $\mathbb{R}^{\re}(z-\mathbb{B})^k\vec{u}_0=(z-\mathbb{B}^{\re})^k\mathbb{R}^{\re}\vec{u}_0$, $k\in \mathbb{N}$. Then the $\vec{u}^{\re}_{sc}$ writes
\begin{align}
\nonumber
	\vec{u}^{\re}_{sc}(t)&=\frac{1}{2\pi i}\int_{i\mathbb{R}+\mu}\frac{\mathrm{e}^{st}}{(\mu_*-s)^p}\left((s-\mathbb{B}^{\re})^{-1}\mathbb{R}^{\re}-\mathbb{R}^{\re}(s-\mathbb{B})^{-1}\right)(\mu_*-\mathbb{B})^p\vec{u}_0ds\\
	\label{eq:representation_formula_scat2}
	&= \frac{1}{2\pi i}\int_{i\mathbb{R}+\mu}\frac{\mathrm{e}^{st}}{(\mu_*-s)^p}\left((s-\mathbb{B}^{\re})^{-1}-\mathbb{R}^{\re}(s-\mathbb{B})^{-1}\mathbb{E}^{\re}\right)\mathbb{R}^{\re}(\mu_*-\mathbb{B})^p\vec{u}_0ds.
\end{align}
To proceed, we will need a more convenient expression of $(s-\mathbb{B}^{\re})^{-1}, \, (s-\mathbb{B})^{-1}$.

\subsubsection{Convenient expressions of resolvents of $\mathbb{B}^{\re}$, $\mathbb{B}$}
Let 
\begin{align*}
	\Delta_D^{\re}v=\Delta v=\sum_{j}\partial_{x_j}^2v, \quad D(\Delta_D^{\re}):=H^1_0(\Delta; \Omega^{\re,c})=H^1_0(\Omega^{\re,c})\cap H^2(\Omega^{\re,c})
\end{align*}
be the usual Dirichlet Laplacian. 
For $s\in \mathbb{C}^+:=\{z\in \mathbb{C}: \, \Re z>0\}$, the resolvent of the Dirichlet Laplacian written with respect to $s^2$ is defined as $\mathcal{R}_D^{\re}(s):=(s^2-\Delta_D^{\re})^{-1}$. It is analytic as $\mathcal{L}(L^2(\Omega^{\re,c}), H^1_0(\Omega^{\re,c}))$-valued function on $\mathbb{C}^+$, because $\Delta^{\re}_D$ is non-positive. The resolvent of $\mathbb{B}^{\re}$ can be expressed solely via $\mathcal{R}^{\re}_D$; the proof is left to the reader.  
\begin{lemma}[A convenient expression of the resolvent of $\mathbb{B}^{\re}$]
	\label{lem:resolvent_Bre}
	For all $s\in \mathbb{C}^+$,
	$$(s-\mathbb{B}^{\re})^{-1}=\left(
	\begin{matrix}
		s\mathcal{R}_D^{\re}(s)& \mathcal{R}_D^{\re}(s)\\
		s^2\mathcal{R}_D^{\re}(s)-\operatorname{Id} & s\mathcal{R}_D^{\re}(s)
	\end{matrix}\right).$$
\end{lemma}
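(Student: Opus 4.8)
The plan is to verify the claimed identity by direct computation, i.e.\ to denote by $M^{\re}(s)$ the operator matrix on the right-hand side and to check that it is a two-sided inverse of $s-\mathbb{B}^{\re}$ with the correct mapping properties. The first step is to recall why $\mathcal{R}_D^{\re}(s)$ is available on all of $\mathbb{C}^+$: since $\Delta_D^{\re}$ is self-adjoint and non-positive, $\sigma(\Delta_D^{\re})\subset(-\infty,0]$, and for $s\in\mathbb{C}^+$ one has $s^2\in\mathbb{C}\setminus(-\infty,0]$ (indeed $\operatorname{Im} s^2=2(\operatorname{Re} s)(\operatorname{Im} s)$ vanishes only when $\operatorname{Im} s=0$, in which case $s^2=(\operatorname{Re} s)^2>0$); hence $\mathcal{R}_D^{\re}(s)=(s^2-\Delta_D^{\re})^{-1}$ is a bounded operator from $L^2(\Omega^{\re,c})$ onto $D(\Delta_D^{\re})=H^1_0(\Omega^{\re,c})\cap H^2(\Omega^{\re,c})$, the $H^2$-regularity coming from the smoothness of $\Gamma^{\re}$ together with elliptic regularity up to the boundary.

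Next I would carry out the elimination that produces $M^{\re}(s)$. Given $\vec{f}=(f,g)^{T}\in V^{\re}=H^1_0(\Omega^{\re,c})\times L^2(\Omega^{\re,c})$, solving $(s-\mathbb{B}^{\re})(v_1,v_2)^{T}=(f,g)^{T}$ amounts to $s v_1-v_2=f$ and $-\Delta v_1+s v_2=g$; substituting $v_2=s v_1-f$ into the second relation gives $(s^2-\Delta)v_1=sf+g$, so $v_1=\mathcal{R}_D^{\re}(s)(sf+g)=s\mathcal{R}_D^{\re}(s)f+\mathcal{R}_D^{\re}(s)g$ and $v_2=s v_1-f=(s^2\mathcal{R}_D^{\re}(s)-\operatorname{Id})f+s\mathcal{R}_D^{\re}(s)g$, which is precisely the action of $M^{\re}(s)$. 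One then checks that $M^{\re}(s)\vec{f}\in D(\mathbb{B}^{\re})$: indeed $v_1\in H^1_0\cap H^2$, so $\Delta v_1\in L^2$, and $v_2=s v_1-f\in H^1_0(\Omega^{\re,c})$ since both $v_1$ and $f$ lie in $H^1_0(\Omega^{\re,c})$; in particular $M^{\re}(s)$ maps $V^{\re}$ into $D(\mathbb{B}^{\re})$, and by construction $(s-\mathbb{B}^{\re})M^{\re}(s)=\operatorname{Id}$ on $V^{\re}$.

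Finally I would establish the reverse identity through injectivity: if $\vec{v}=(v_1,v_2)^{T}\in D(\mathbb{B}^{\re})$ and $(s-\mathbb{B}^{\re})\vec{v}=0$, the same elimination with $f=g=0$ yields $(s^2-\Delta_D^{\re})v_1=0$, hence $v_1=0$ and then $v_2=s v_1=0$; thus $s-\mathbb{B}^{\re}$ is injective on its domain, which together with the surjectivity already obtained gives $M^{\re}(s)=(s-\mathbb{B}^{\re})^{-1}$ and, in particular, $M^{\re}(s)(s-\mathbb{B}^{\re})=\operatorname{Id}$ on $D(\mathbb{B}^{\re})$. There is no real obstacle here — this is essentially an algebraic Schur-complement computation — and the only point requiring a little care is bookkeeping of function spaces, namely making sure that the $H^2$-regularity of $\mathcal{R}_D^{\re}(s)$ is what places the second component in $H^1_0(\Omega^{\re,c})$ as demanded by $D(\mathbb{B}^{\re})$, and that each application of $\Delta$ and of $\mathcal{R}_D^{\re}(s)$ stays within the stated domains.
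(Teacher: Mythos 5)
Your verification is correct and is precisely the argument the paper intends: the lemma's proof is explicitly left to the reader, and the expected route is exactly this elimination $v_2=sv_1-f$, $(s^2-\Delta)v_1=sf+g$, together with the domain bookkeeping via $D(\Delta_D^{\re})=H^1_0(\Omega^{\re,c})\cap H^2(\Omega^{\re,c})$ and injectivity from $s^2\notin(-\infty,0]\supseteq\sigma(\Delta_D^{\re})$ for $s\in\mathbb{C}^+$. No gaps; your handling of the two-sided inverse and of the membership of the second component in $H^1_0(\Omega^{\re,c})$ is exactly the care the statement requires.
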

From Lemma \ref{lem:resolvent_Bre}, we see that analyticity properties of the operator-valued function $s\mapsto (s-\mathbb{B}^{\re})^{-1}$  are inherited from the analyticity properties of $s\mapsto\mathcal{R}_D^{\re}(s)$.

To write an analogous expression for the operator $\mathbb{B}$, we will need the resolvent w.r.t. $s^2$ of the free Laplacian
\begin{align}
	\label{eq:resolvent_free}
\mathcal{R}_0: \, L^2(\mathbb{R}^3)\rightarrow L^2(\mathbb{R}^3), \quad \mathcal{R}_0(s)=(s^2-\Delta_0)^{-1}, \quad D(\Delta_0)=H^2(\mathbb{R}^3).
\end{align}
Then we have the identity analogous to Lemma \ref{lem:resolvent_B} holding true.
\begin{lemma}[A convenient expression of the resolvent of $B$]
	\label{lem:resolvent_B}
	For all $s\in \mathbb{C}^+$,
	$$(s-\mathbb{B})^{-1}=\left(
	\begin{matrix}
		s\mathcal{R}_0(s)& \mathcal{R}_0(s)\\
		s^2 \mathcal{R}_0(s)-\operatorname{Id} & s\mathcal{R}_0(s)
	\end{matrix}\right).$$
\end{lemma}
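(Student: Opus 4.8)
The plan is to replicate the (omitted) proof of Lemma~\ref{lem:resolvent_Bre} verbatim, with the Dirichlet resolvent $\mathcal R_D^{\re}(s)$ replaced throughout by the free resolvent $\mathcal R_0(s)$. First I would solve the resolvent equation $(s-\mathbb B)\vec v=\vec f$ componentwise and read off the block form of the solution operator; then I would check directly that the resulting operator maps $V^0$ into $D(\mathbb B)$ and is a two-sided inverse of $s-\mathbb B$.

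Concretely: fix $s\in\mathbb C^+$ and $\vec f=(f_1,f_2)\in V^0$, viewed as a column vector, and look for $\vec v=(v_1,v_2)\in D(\mathbb B)$ solving $(s-\mathbb B)\vec v=\vec f$. By definition of $\mathbb B$ this is the system $sv_1-v_2=f_1$, $sv_2-\Delta v_1=f_2$. Eliminating $v_2=sv_1-f_1$ and substituting into the second equation gives $(s^2-\Delta_0)v_1=sf_1+f_2$, hence $v_1=\mathcal R_0(s)(sf_1+f_2)$ and $v_2=s\mathcal R_0(s)(sf_1+f_2)-f_1$. Collecting the coefficients of $f_1$ and $f_2$ in $(v_1,v_2)$, i.e. $v_1=s\mathcal R_0(s)f_1+\mathcal R_0(s)f_2$ and $v_2=(s^2\mathcal R_0(s)-\operatorname{Id})f_1+s\mathcal R_0(s)f_2$, gives exactly the block matrix in the statement.

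It remains to justify that this operator, say $\mathcal S(s)$, takes values in $D(\mathbb B)$ and is a genuine inverse. Since $\Re s>0$, $\mathcal R_0(s)$ is bounded from $L^2(\mathbb R^3)$ into $D(\Delta_0)=H^2(\mathbb R^3)$, so for $\vec f\in V^0$ one has $v_1\in H^2(\mathbb R^3)\subset H^1_0(\mathbb R^3)$ with $\Delta v_1=\Delta_0v_1\in L^2(\mathbb R^3)$, while $v_2=sv_1-f_1\in H^1(\mathbb R^3)=H^1_0(\mathbb R^3)$; hence $\mathcal S(s)\vec f\in D(\mathbb B)$. A direct computation using $(s^2-\Delta_0)\mathcal R_0(s)=\operatorname{Id}$ on $L^2(\mathbb R^3)$ and $\mathcal R_0(s)(s^2-\Delta_0)=\operatorname{Id}$ on $D(\Delta_0)$ then shows $(s-\mathbb B)\mathcal S(s)=\operatorname{Id}$ on $V^0$ and $\mathcal S(s)(s-\mathbb B)=\operatorname{Id}$ on $D(\mathbb B)$, so that $s\in\rho(\mathbb B)$ and $\mathcal S(s)=(s-\mathbb B)^{-1}$. (Equivalently, one could first invoke that $\mathbb B$ generates the polynomially bounded free-space wave $C_0$-group, so that $\mathbb C^+\subset\rho(\mathbb B)$, and then merely check that $\mathcal S(s)$ is a right inverse.) The proof presents no genuine difficulty; the only step requiring a line of care is the $D(\mathbb B)$-mapping property, which is just whole-space elliptic regularity of $\mathcal R_0(s)$. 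Finally, the frequency-domain analyticity used in the sequel — $s\mapsto(s-\mathbb B)^{-1}$ is analytic on $\mathbb C^+$ as an $\mathcal L(V^0)$-valued (indeed $\mathcal L(V^0,D(\mathbb B))$-valued) map — is inherited from the analyticity of $s\mapsto\mathcal R_0(s)$, exactly as noted after Lemma~\ref{lem:resolvent_Bre}.
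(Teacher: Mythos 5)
Your proof is correct: eliminating $v_2$ to get $(s^2-\Delta_0)v_1=sf_1+f_2$, reading off the block matrix, and verifying the $D(\mathbb{B})$-mapping property and the two-sided inverse via $(s^2-\Delta_0)\mathcal{R}_0(s)=\operatorname{Id}$ is exactly the elementary computation the paper has in mind (it leaves the proof of the analogous Lemma~\ref{lem:resolvent_Bre} to the reader and states Lemma~\ref{lem:resolvent_B} without proof). No gaps; the argument can stand as written.
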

In view of \eqref{eq:representation_formula_scat2}, 
let us introduce a restriction of the free resolvent to $\Omega^{\varepsilon,c}$, itself acting on functions supported on $\Omega^{\varepsilon,c}$. In other words, let 
\begin{align*}
	\mathcal{R}_0^{\re}(s):=\mathbb{R}^{\re}\mathcal{R}_0(s)\mathbb{E}^{\re}: \, L^2(\Omega^{\varepsilon,c})\rightarrow L^2(\Omega^{\varepsilon,c}).
\end{align*}
Combining Lemmas \ref{lem:resolvent_Bre} and \ref{lem:resolvent_B}, as well as the above definition, into \eqref{eq:representation_formula_scat2}, and defining  
\begin{align*}
	\vec{u}_0^{p}:=(\mu_*-\mathbb{B})^p\vec{u}_0,\quad 
\end{align*}
and using the same notation for the restriction of this function to $\Omega^{\re,c}$, 
allows us to obtain an explicit expression for the first component of the vector $\vec{u}^{\re}_{sc}$:
\begin{align}
	\label{eq:representation_formula_main}
	u^{\varepsilon}_{sc}(t)=\frac{1}{2\pi i}\int_{i\mathbb{R}+\mu}\frac{\mathrm{e}^{st}}{(\mu_*-s)^p}(\mathcal{R}^{\varepsilon}_D(s)-\mathcal{R}_0^{\re}(s))(su_0^{p}+v_0^{p})ds. 
\end{align}
Let us remark that the identity \eqref{eq:representation_formula_main} can be obtained by considering the second-order wave equation for $u^{\varepsilon}(t), \, u^{inc}(t)$, cf. \cite{mantile_posilicano}.  \begin{remark}
	\label{rem:bmap}
By a direct computation, $$(\mu_*-\mathbb{B})^p: \, H^{k+1}(\mathbb{R}^3)\times H^{k}(\mathbb{R}^3)\rightarrow H^{k+1-p}(\mathbb{R}^3)\times H^{k-p}(\mathbb{R}^3).$$
\end{remark}
Because we are interested in the behaviour of  $u^{\varepsilon}_{sc}$ in a compact around $\Omega^{\varepsilon,c}$, let us define an appropriate  cutoff function.
\begin{definition}
	We fix a regular function $\chi\in C^{\infty}(\mathbb{R}^3; [0, 1])$, s.t. $\chi(\bx)=1$ for $|\bx|<1$, $\chi(\bx)=0$ for $|\bx|\geq 2$. Let  $\chi_a(\vec{x}):=\chi(a^{-1}\vec{x})$, for all $\vec{x}\in \mathbb{R}^3$, $a>0$. 
\end{definition}
Evidently, $\chi_a=1$ on $B_a(\vec{0})$ and vanishes on $B_{2a}^c(\vec{0})$. In what follows we will use the shortened notation: 
\begin{align*}
	B_{\rho}:=B_{\rho}(\boldsymbol{0}), \quad \rho>0,  \qquad B_{\rho_1, \rho_2}:=B_{\rho_2}\setminus \overline{B_{\rho_1}}, \quad \rho_2>\rho_1>0. 
\end{align*}
From the results of \cite{burq}, it is known that the operator-valued function $\mathbb{C}^+\ni s\mapsto \chi_b \mathcal{R}_D^{\re}(s)\chi_a\in \mathcal{L}(L^2(B_{2a}\cap \Omega^{\varepsilon,c}), L^2(B_{2b}\cap \Omega^{\varepsilon,c}))$ admits an analytic continuation in a certain region $W^{\re}$ of the complex plane that intersects $\mathbb{C}^{-}$. The size of this region is determined by the geometry of the obstacle. On the other hand, the function $s\mapsto \chi_b \mathcal{R}^{\re}_0(s)\chi_a\in \mathcal{L}(L^2(B_{2a}\cap \Omega^{\varepsilon,c}), L^2(B_{2b}\cap \Omega^{\varepsilon,c}))$ is entire. Those analyticity properties enable us to deform the integral contour of \eqref{eq:representation_formula_main} and to obtain estimates close to optimal for large times.

Let us introduce the notation: 
\begin{align*}
	\chi_{r,R}:=\chi_R(1-\chi_{r/2}), \quad \chi_{0,R}:=\chi_R, 
\end{align*}
{ so that }$\chi_{r,R}=1 \text{ on }B_R\setminus {B_r}$ and vanishes  for $\|\bx\|<r/2$ and $\|\bx\|>2R$. We will be particularly interested in the behaviour of the scattered field away from $\Omega^{\re}$:
\begin{align}
	\label{eq:ueps}
	\chi_{\rho_1, \rho_2}u^{\varepsilon}_{sc}(t)=\frac{1}{2\pi i}\int_{i\mathbb{R}+\mu}\frac{e^{st}}{(\mu_*-s)^p}\chi_{\rho_1, \rho_2}\left(\mathcal{R}^{\varepsilon}_D(s)-\mathcal{R}_0^{\re}(s)\right)\chi_{R_0}(su_0^{p}+v_0^{p})ds,
\end{align}
where $\rho_2>\rho_1>\re$ are given, and 
where we used the fact that $\operatorname{supp}\vec{u}_0^p\subseteq \operatorname{supp}\vec{u}_0\subset B(0,R_0)$, as argued in \eqref{eq:ic}. The key idea is the following:
\begin{itemize}
	\item we start with the expression \eqref{eq:ueps} and study the analyticity properties and $\varepsilon$-dependent bounds for the map 
$
		s\mapsto \chi_{\rho_1, \rho_2}\left(\mathcal{R}^{\varepsilon}_D(s)-\mathcal{R}_0^{\re}(s)\right)\chi_{R_0}.$
	\item using these results, we will show that the integration contour \eqref{eq:ueps} can be deformed in a way that will enable us to obtain the bounds on $\chi_{\rho_1, \rho_2}u^{\varepsilon}_{sc}(t). $
\end{itemize} 
In view of the above discussion, let us define the following operator, for $s\in \mathbb{C}^+$, 
\begin{align}
	\label{eq:ded}
	\mathcal{D}^{\re}_D(s):=\mathcal{R}^{\re}_D(s)-\mathcal{R}_0^{\re}(s), \quad \mathcal{D}^{\re}_D(s): \, L^2(\Omega^{\re,c})\rightarrow L^2(\Omega^{\re, c}). 
\end{align}
Our goal now is to obtain estimates on $
	s\mapsto \chi_{b}\mathcal{D}^{\re}_D(s)\chi_{a}$ 
explicit in $\re$, in certain operator norms. One could have argued that applying a simple scaling argument $\vec{x}\rightarrow \frac{\vec{x}}{\varepsilon}$ and using the results on $\mathcal{R}^{1}_D(s)$, $\mathcal{R}_0^1(s)$, available in the literature should yield the desired bounds. Unfortunately, this is not the case, since  such estimates require having bounds of the type $\chi_{b/\re} \mathcal{R}^{1}_D(\re s)\chi_{a/\re}$. At the same time, most available results in the literature for $\chi_b R^1_D(s) \chi_a$ are implicit in $b$ and $a$ (see, however, \cite{galkowski_spence_wunsch}), and thus we cannot pursue this path. Instead, we derive a Krein-type resolvent formula for $\mathcal{D}^{\re}_D$, cf. \cite{mantile_posilicano_sini}. 
\subsection{A Krein-like resolvent formula for $\mathcal{D}^{\re}_D$}
Let $s\in \mathbb{C}^+$,  $
	f\in L^2(\Omega^{\re,c}), \; p^{\re}:=\mathcal{D}^{\varepsilon}_D(s)f.$ 
Then, by definition, $p^{\re}\in H^2(\Omega^{\re,c})$ is a unique solution to
\begin{align}
	\label{eq:ald}
	\begin{split}
	(s^2-\Delta)p^{\re}=0 \text{ in }\Omega^{\re,c},\qquad 
	\gamma_0^{\re}p^{\re}=-\gamma_0^{\re}\mathcal{R}_0(s)f.
	\end{split}
\end{align}
Let us introduce the solution operator for the exterior Dirichlet problem:
\begin{align*}
\mathcal{N}^{\re}_D(s): \, H^{1/2}(\Gamma^{\re})\rightarrow H^1(\Omega^{\re,c}), \quad 	\mathcal{N}^{\varepsilon}_Dg:=v_g, 
\end{align*}
where $v_g\in H^1(\Omega^{\re,c})$ is a unique solution to 
\begin{align*}
&(s^2-\Delta) v_g=0 \text{ in }\Omega^{\re,c},\qquad 
\gamma_0^{\re} v_g=g.
\end{align*}
In this case, by definition, 
\begin{align}
	\label{eq:hatug}
p^{\re}=\mathcal{D}^{\varepsilon}_D(s)f=-\mathcal{N}^{\varepsilon}_D(s)\gamma_0^{\varepsilon}\mathcal{R}_0(s)f.
\end{align}
Our goal is to express $\mathcal{N}^{\varepsilon}_D(s)\gamma_0^{\varepsilon}\mathcal{R}_0(s)$ via   $\chi_{a\varepsilon}\mathcal{R}^{\re}_D(s)\chi_{b\varepsilon}$, with $a, b$ fixed, which will contain an important geometric information about the obstacle (and for which the estimates available in the literature can be easily adapted). We see immediately that the heterogeneity is concentrated on the boundary rather than in the right-hand side; on the other hand, to use $\mathcal{R}^{\varepsilon}_D(s)$, we need to have vanishing boundary conditions. To deal with this, we will use a well-chosen lifting of $\gamma_0^{\re}\mathcal{R}_0(s)f$, concentrated in the vicinity of $\Omega^{\varepsilon}$. 
\begin{figure}
	\centering
	\includegraphics[width=0.45\textwidth]{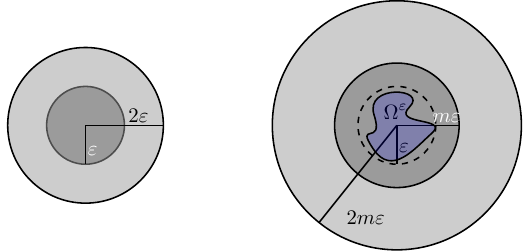}
	\caption{Left: support of $\chi_{\varepsilon}$ on the plane $z=0$. In dark gray we mark the sphere where $\chi_{\varepsilon}=1$, and in light gray where $\chi_{\varepsilon}$ varies between $0$ and $1$. Outside of the sphere of radius $2\varepsilon$, $\chi_{\varepsilon}=0$.
	 Right: support of $\chi_{m\varepsilon}$, $m\geq 2$, vs the obstacle $\Omega^{\varepsilon}$, $z=0$.}
 	\label{fig:illustration_to_Krein}
\end{figure}

\textit{Reducing the inhomogeneous Dirichlet problem to the homogeneous one.} Let us set $m$ as below and rewrite $p^{\re}$ as the following sum:
\begin{align}
	\label{eq:presc}
p^{\re}(s)=\chi_{m\varepsilon} p^{\re}_{sc}(s,z)+{r}^{\re}(s,z),\qquad m\geq 2, 	
\end{align}
 where $p^{\re}_{sc}(s,z)$ satisfies \eqref{eq:ald} with $s$ replaced by $z$ in the first equation, i.e. 
 \begin{align}
 	\label{eq:pre}
 	p^{\re}_{sc}(s,z)=-\mathcal{N}^{\varepsilon}_D(z)\gamma_0^{\varepsilon}\mathcal{R}_0(s)f.
 \end{align}
It appears that for $z>0$, one can estimate  $p^{\re}_{sc}(s,z)$ in a manner close to optimal (this will be discussed later). On the other hand, using that $(z^2-\Delta)p^{\re}_{sc,z}=0$ in $\Omega^{\re,c}$, we verify that  $r^{\re}$ satisfies
\begin{align}
	\label{eq:ald2}
	(s^2-\Delta){r}^{\re}={\left([\Delta, \chi_{m\varepsilon}]-(s^2-z^2)\chi_{m\varepsilon}\right)p^{\re}_{sc}(s,z)}\quad \text{ in }\Omega^{\re, c},
	\qquad \gamma_0^{\re}{r}^{\re}=0.
\end{align}
The equality on the trace follows by remarking that $\gamma_0^{\re}r^{\varepsilon}=\gamma_0^{\re}p^{\re}-\gamma_0^{\re}\chi_{m\varepsilon}p^{\re}_{sc}$, $\Omega^{\varepsilon}\subseteq B_{\varepsilon}$, and $\chi_{m\varepsilon}=1$ on $\partial\Omega^{\varepsilon}$ due to $m\geq 2$, see Figure \ref{fig:illustration_to_Krein}.  Let us now introduce a new differential operator
\begin{align*}
	L_{\delta}(s,z):=[\Delta, \chi_{\delta}]-(s^2-z^2)\chi_{\delta}=\left(\Delta\chi_{\delta}\right)+2\nabla\chi_{\delta}\nabla-(s^2-z^2)\chi_{\delta}.
\end{align*}
Thus, using the above notation and the definition of $\mathcal{R}^{\varepsilon}_D$, 
\begin{align}
	\label{eq:rRe}
	r^{\re}(s,z)&=\mathcal{R}^{\re}_D(s)	L_{m\varepsilon}(s,z)p^{\re}_{sc}(s,z)\equiv \mathcal{R}^{\re}_D(s)\chi_{2m\varepsilon}	L_{m\varepsilon}(s,z)p^{\re}_{sc}(s,z),
\end{align}
where we use the fact that $\operatorname{supp}L_{m\varepsilon}(s,z)\varphi\subseteq \operatorname{supp}\chi_{m\varepsilon}$ for all $\varphi$, and that $\chi_{2m\varepsilon}=1$ on $\operatorname{supp}\chi_{m\varepsilon}$, cf. Figure \ref{fig:illustration_to_Krein}. We have made appear the quantity $\mathcal{R}^{\re}_D(s)\chi_{2m\varepsilon}$, but our goal is to have   $\chi_{b\varepsilon}\mathcal{R}^{\re}_D(s)\chi_{2m\varepsilon}$.

\textit{A new expression for $r^{\varepsilon}$ involving only the truncated resolvent $\chi_{c\varepsilon}\mathcal{R}^{\re}_D\chi_{b\varepsilon}$. }
The key idea is to study the difference between $r^{\re}$ and $\chi_{n\re}r^{\re}$, defined below:
\begin{align}
	\label{eq:assumption_n}
r^{\varepsilon}_{\operatorname{ff}}:=	r^{\varepsilon}-r^{\re}_{\operatorname{nf}}, \quad r^{\re}_{\operatorname{nf}}:=\chi_{n\varepsilon}r^{\varepsilon}, \quad n\geq 2.
\end{align}
With \eqref{eq:rRe}, we see that 
\begin{align}
	\label{eq:nfe}
r^{\varepsilon}_{\operatorname{nf}}= \chi_{n\varepsilon}\mathcal{R}^{\varepsilon}_D(s)\chi_{2m\varepsilon}L_{m\varepsilon}(s,z)p^{\varepsilon}(s,z),
\end{align}
where now the resolvent $\mathcal{R}_D^{\re}(s)$ appears surrounded with two $\varepsilon$-dependent truncations. 

On the other hand, we remark that  $r^{\varepsilon}_{\operatorname{ff}}=(1-\chi_{n\varepsilon})r^{\varepsilon}$, and thus $\left.r^{\varepsilon}_{\operatorname{ff}}\right|_{B_{n\varepsilon}}=0$; since $\Omega^{\varepsilon}\subseteq B_{\varepsilon}\subsetneq B_{n\varepsilon}$, $r^{\varepsilon}_{\operatorname{ff}}\in H^2_0(\Omega^{\varepsilon,c})$. Therefore, its continuation by zero inside $\Omega^{\re,c}$ is of regularity $\mathbb{E}^{\re}r^{\re}_{\operatorname{ff}}\in H^2(\mathbb{R}^3)$, i.e.  $\mathbb{E}^{\re}r^{\re}_{\operatorname{ff}}\in D(\Delta_0)$. 
The key idea is then to express $r^{\re}_{\operatorname{ff}}$ using the cut-off free resolvent $\mathcal{R}_0^{\re}(s)=\mathbb{R}^{\re}\mathcal{R}_0(s)\mathbb{E}^{\re}$. 

We rewrite 
\begin{align}
	\label{eq:auxiliary_r}
	(s^2-\Delta_0)\mathbb{E}^{\re}r^{\re}_{\operatorname{ff}}&=\mathbb{E}^{\re}(s^2-\Delta_0)\mathbb{E}^{\re}r^{\re}_{\operatorname{ff}}=\mathbb{E}^{\re}(s^2-\Delta)r^{\re}_{\operatorname{ff}}.
\end{align}
From $r^{\re}_{\operatorname{ff}}=(1-\chi_{n\re})r^{\re}$ it follows that (remark that $\Delta$ below is a differentiation operator):
\begin{align}
	\label{eq:rff}
	(s^2-\Delta)r^{\re}_{\operatorname{ff}}&=(1-\chi_{n\re})(s^2-\Delta)r^{\re}+[\Delta, \chi_{n\re}]r^{\re} \text{ in }\Omega^{\re,c}.
\end{align}
We simplify the right-hand side; we choose $n=2m$, and use \eqref{eq:ald2}: 
\begin{align*}
(1-\chi_{2m\re})(s^2-\Delta)r^{\re}=(1-\chi_{2m\re})([\Delta, \chi_{m\re}]-(s^2-z^2)\chi_{m\re})p^{\re}_{sc}(s,z)=0,
\end{align*}
since $1-\chi_{2m\re}=0$ on $\overline{B_{2m\re}}\supseteq\operatorname{supp}\chi_{m\re}$. This allows to rewrite \eqref{eq:rff} as follows:
 \begin{align*}
 	(s^2-\Delta)r^{\re}_{\operatorname{ff}}&=[\Delta, \chi_{2m\re}]r^{\re} \text{ in }\Omega^{\re,c}.
 \end{align*}
Next, we replace in the above $r^{\re}$ by \eqref{eq:rRe}; moreover, we recall the identity \eqref{eq:auxiliary_r}:
 \begin{align*}
	(s^2-\Delta_0)\mathbb{E}^{\re}r^{\re}_{\operatorname{ff}}&=\mathbb{E}^{\re}[\Delta, \chi_{2m\re}]\mathcal{R}^{\re}_D(s)\chi_{2m\re}L_{m\re}(s,z)p^{\re}_{sc}(s,z).
\end{align*}
Therefore, 
\begin{align*}
r^{\re}_{\operatorname{ff}}=\mathcal{R}^{\re}_0(s)[\Delta, \chi_{2m\re}]\mathcal{R}^{\re}_D(s)\chi_{2m\re}L_{m\re}(s,z)p^{\re}_{sc}(s,z).
\end{align*}
Combining the above expression with \eqref{eq:nfe} into \eqref{eq:assumption_n} yields
\begin{align*}
r^{\varepsilon}(s,z)&=
\left(\mathcal{R}_0^{\re}(s)[\Delta, \chi_{2m\re}]+\chi_{2m\re}\right)\mathcal{R}^{\re}_D(s)\chi_{2m\re}L_{m\re}(s,z)p^{\re}_{sc}(s,z)\\
&=\left(\mathcal{R}_0^{\re}(s)[\Delta, \chi_{2m\re}]+\chi_{2m\re}\right)\chi_{4m\re}\mathcal{R}^{\re}_D(s)\chi_{2m\re}L_{m\re}(s,z)p^{\re}_{sc}(s,z),
\end{align*}
where we used that $\chi_{4m\re}=1$ on $\operatorname{supp}\chi_{2m\re}$. Replacing in \eqref{eq:presc} $r^{\re}(s,z)$ by the rhs of the above and $p^{\re}_{sc}(s,z)$ by the rhs of \eqref{eq:pre}, and recalling the definition \eqref{eq:hatug} yields
\begin{proposition}[Krein-like formula for the resolvent]
	\label{prop:De}
	For all $s,z\in \mathbb{C}^+$, $0<\re<1$, $m\in \mathbb{N}: \, m\geq 2$, the following identity holds true
	\begin{align*}
	&\mathcal{D}^{\re}_D(s)=\mathcal{R}^{\re}_D(s)-\mathcal{R}_0^{\re}(s)=-\left(\chi_{m\re}+Q_{\re}(s,z)\right)\mathcal{N}^{\re}_D(z)\gamma_0^{\re}\mathcal{R}_0(s), \\ 
	&Q_{\re}(s,z)=-(\mathcal{R}_0^{\varepsilon}(s)[\Delta,\chi_{2m\varepsilon}]+\chi_{2m\re})\chi_{4m\varepsilon}\mathcal{R}^{\re}_D(s)\chi_{2m\re}L_{m\re}(s,z),\\
	&L_{m\re}(s,z)=[\Delta, \chi_{m\varepsilon}]-(s^2-z^2)\chi_{m\varepsilon}.
	\end{align*}
\end{proposition}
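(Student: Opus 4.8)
The plan is to assemble the identity by tracking the two decompositions that were already set up in the text and verifying that the pieces recombine correctly. First I would recall the starting point \eqref{eq:hatug}: for $f\in L^2(\Omega^{\re,c})$ and $s\in\mathbb{C}^+$, the function $p^{\re}=\mathcal{D}^{\re}_D(s)f$ is the unique $H^2(\Omega^{\re,c})$ solution of the homogeneous Helmholtz equation with Dirichlet datum $-\gamma_0^{\re}\mathcal{R}_0(s)f$, and that this is precisely $p^{\re}=-\mathcal{N}^{\re}_D(s)\gamma_0^{\re}\mathcal{R}_0(s)f$. The whole point of the computation is that $s$ in $\mathcal{N}^{\re}_D(s)$ can be traded for an arbitrary $z\in\mathbb{C}^+$ at the cost of explicit correction terms, so that the geometry enters only through truncated resolvents $\chi\,\mathcal{R}^{\re}_D(s)\,\chi$.

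Next I would carry out the two-step splitting exactly as in the paragraphs preceding the proposition. Write $p^{\re}=\chi_{m\re}p^{\re}_{sc}(s,z)+r^{\re}(s,z)$ with $p^{\re}_{sc}(s,z)=-\mathcal{N}^{\re}_D(z)\gamma_0^{\re}\mathcal{R}_0(s)f$ as in \eqref{eq:pre}; since $m\geq 2$ gives $\chi_{m\re}\equiv 1$ on $\partial\Omega^{\re}$, the remainder $r^{\re}$ has zero trace, and plugging into $(s^2-\Delta)p^{\re}=0$ together with $(z^2-\Delta)p^{\re}_{sc}=0$ yields \eqref{eq:ald2}, hence $r^{\re}=\mathcal{R}^{\re}_D(s)L_{m\re}(s,z)p^{\re}_{sc}(s,z)$, and the support of $L_{m\re}(s,z)\varphi$ sits inside $\operatorname{supp}\chi_{m\re}$, which lets us insert $\chi_{2m\re}$ and get \eqref{eq:rRe}. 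Then I would perform the second split $r^{\re}=\chi_{n\re}r^{\re}+(1-\chi_{n\re})r^{\re}$ with $n=2m$: the near-field part is \eqref{eq:nfe}, and the far-field part $r^{\re}_{\operatorname{ff}}=(1-\chi_{2m\re})r^{\re}$ vanishes on $B_{2m\re}\supset\Omega^{\re}$, so $\mathbb{E}^{\re}r^{\re}_{\operatorname{ff}}\in H^2(\mathbb{R}^3)=D(\Delta_0)$. Applying $(s^2-\Delta_0)$, using \eqref{eq:auxiliary_r}, commuting through $(1-\chi_{2m\re})$ and killing the term $(1-\chi_{2m\re})(s^2-\Delta)r^{\re}=0$ via \eqref{eq:ald2}, I get $(s^2-\Delta)r^{\re}_{\operatorname{ff}}=[\Delta,\chi_{2m\re}]r^{\re}$ in $\Omega^{\re,c}$, hence $r^{\re}_{\operatorname{ff}}=\mathcal{R}^{\re}_0(s)[\Delta,\chi_{2m\re}]\mathcal{R}^{\re}_D(s)\chi_{2m\re}L_{m\re}(s,z)p^{\re}_{sc}(s,z)$ after substituting \eqref{eq:rRe}.

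Finally I would reassemble: adding \eqref{eq:nfe} and the expression for $r^{\re}_{\operatorname{ff}}$ gives
\[
r^{\re}(s,z)=\bigl(\mathcal{R}^{\re}_0(s)[\Delta,\chi_{2m\re}]+\chi_{2m\re}\bigr)\mathcal{R}^{\re}_D(s)\chi_{2m\re}L_{m\re}(s,z)p^{\re}_{sc}(s,z),
\]
and inserting a harmless $\chi_{4m\re}\equiv 1$ on $\operatorname{supp}\chi_{2m\re}$ brings it to the bracketed form in $Q_{\re}(s,z)$. Substituting this and $p^{\re}_{sc}(s,z)=-\mathcal{N}^{\re}_D(z)\gamma_0^{\re}\mathcal{R}_0(s)f$ into $p^{\re}=\chi_{m\re}p^{\re}_{sc}(s,z)+r^{\re}(s,z)$, and noting the arbitrariness of $f$, produces exactly the claimed operator identity $\mathcal{D}^{\re}_D(s)=-(\chi_{m\re}+Q_{\re}(s,z))\mathcal{N}^{\re}_D(z)\gamma_0^{\re}\mathcal{R}_0(s)$.

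The main obstacle, and the thing I would be most careful about, is the bookkeeping of domains and regularity that legitimizes every application of $\mathcal{R}^{\re}_D(s)$ and $\mathcal{R}^{\re}_0(s)$: one must check that $L_{m\re}(s,z)p^{\re}_{sc}(s,z)\in L^2(\Omega^{\re,c})$ (which uses $p^{\re}_{sc}\in H^1$ and the smoothness and compact support of the cutoff, so the commutator is only first order and lands in $L^2$), that $\mathbb{E}^{\re}r^{\re}_{\operatorname{ff}}$ genuinely lies in $D(\Delta_0)$ so that $\mathcal{R}^{\re}_0(s)$ inverts $(s^2-\Delta_0)$ on it, and that the nesting of cutoffs $\chi_{m\re}\prec\chi_{2m\re}\prec\chi_{4m\re}$ is compatible with $\Omega^{\re}\subseteq B_{\re}$ (Figure \ref{fig:illustration_to_Krein}). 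Everything else is algebraic manipulation of commutators and substitution; the key structural point — that the mismatch between the $z$-equation satisfied by $p^{\re}_{sc}$ and the $s$-equation satisfied by $p^{\re}$ is absorbed into the compactly supported operator $L_{m\re}(s,z)$ — has already been isolated in the derivation, so the proof is essentially a matter of writing out these steps cleanly and invoking uniqueness for the exterior Dirichlet Helmholtz problem in $\mathbb{C}^+$.
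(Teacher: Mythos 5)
Your proposal is correct and follows essentially the same route as the paper: it reproduces the two-step decomposition $p^{\re}=\chi_{m\re}p^{\re}_{sc}(s,z)+r^{\re}$ and the subsequent near-field/far-field splitting of $r^{\re}$ with $n=2m$, together with the extension-by-zero argument for $r^{\re}_{\operatorname{ff}}$ and the insertion of the redundant cutoff $\chi_{4m\re}$, exactly as in the derivation preceding the proposition. The regularity and support bookkeeping you flag is indeed the only nontrivial checking required, and your handling of it matches the paper's.
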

In what follows, we will estimate all the terms occurring in the above expression for $\mathcal{D}^{\re}_D(s)$. While we have defined the above expression for $s\in \mathbb{C}^+$, we will argue that it is possible to continue it analytically in a certain topology for a range of $s\in \mathbb{C}^{-}$, and derive estimates for this, extended, range of $s$. 
\subsection{Behavior of $\mathcal{D}^{\re}_D$ on $\mathbb{C}$}
Let $
	|\Re s|_{-}:=\max(0, -\Re s).
$
We then have 
\begin{proposition}
	\label{prop:bounds}
Let $0<\rho<\infty$, $0<\re<1$. The operator-valued function 
	\begin{align*}
		s\mapsto \mathcal{D}^{\re}_D(s)\in \mathcal{L}(L^2(\Omega^{\re,c}), H^1_0(\Omega^{\re,c})\cap H^2(\Omega^{\re,c})),
	\end{align*}
is analytic on $\mathbb{C}^+$. Viewed as the function
$
	s\mapsto \chi_{\rho}\mathcal{D}^{\re}_D(s)\chi_{\rho}\in \mathcal{L}(L^2(\Omega^{\re,c}), L^2(\Omega^{\re,c}))$, 
it admits an analytic extension into the following region of the complex plane:
\begin{align*}
\mathscr{D}^{\re}=\{s\in \mathbb{C}:\,	\Re s>-c_{\Omega}\re^{-1}\},
\end{align*}
where $c_{\Omega}$ is a strictly positive constant that is independent of $\re$, $\rho$, but depends on the geometry of the scatterer $\Omega$. 

Moreover, there exist $\re_0, \, C>0$, s.t. for all $0<\re<\re_0$, $s\in \mathbb{C}_{\Omega}$,  $\mathcal{D}^{\re}_D$ satisfies the following bounds. In the far-field, for $R>r>1$, 
\begin{align*}
	\|\chi_{r,R}\mathcal{D}^{\re}_D(s)\chi_{R_0}\|_{L^2(\Omega^{\re,c})\rightarrow L^2(\Omega^{\re,c})}\leq CC_{r,R} c_{\mathcal{O}_0}\re \mathrm{e}^{|\Re s|_{-}(R_0+R+\widetilde{c}_{\Omega}^+\re)}(1+|s\re|)^3,
\end{align*}
where $c_{\mathcal{O}_0}=R_0^{3/2}\max(1, r_0^{-2})$, $C_{r,R}=R^{3/2}r^{-1}$, and $\widetilde{c}_{\Omega}^+>0$ depends solely on the geometry of the scatterer $\Omega$. 
\end{proposition}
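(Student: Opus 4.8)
\textbf{Proof plan for Proposition \ref{prop:bounds}.} The plan is to exploit the Krein-type factorization of Proposition \ref{prop:De}, treating the three building blocks separately: the free-resolvent trace term $\mathcal{N}^{\re}_D(z)\gamma_0^{\re}\mathcal{R}_0(s)$, the truncated Dirichlet resolvent $\chi_{4m\re}\mathcal{R}^{\re}_D(s)\chi_{2m\re}$ hidden inside $Q_{\re}(s,z)$, and the cutoff free resolvent $\mathcal{R}^{\re}_0(s)$. The analyticity of $s\mapsto\mathcal{D}^{\re}_D(s)$ on $\mathbb{C}^+$ is immediate from the analyticity of both $\mathcal{R}^{\re}_D(s)$ and $\mathcal{R}_0(s)$ there, so the content is the meromorphic/analytic continuation into $\mathscr D^{\re}$ together with the explicit $\re$-dependent bound. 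The first step is to make a \emph{clean choice of the auxiliary parameter} $z$: since the formula in Proposition \ref{prop:De} holds for every $z\in\mathbb{C}^+$, I would freeze $z$ to a fixed positive real number (say $z=1$, or $z$ of order $1$), which makes $p^{\re}_{sc}(s,z)=-\mathcal{N}^{\re}_D(z)\gamma_0^{\re}\mathcal{R}_0(s)f$ a product of a \emph{fixed} coercive (real-frequency) elliptic solve with the free resolvent evaluated at the running spectral parameter $s$. This kills the only place where a large spectral parameter could enter the $\mathcal{N}^{\re}_D$ factor.

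The second step is the scaling argument on $\chi_{4m\re}\mathcal{R}^{\re}_D(s)\chi_{2m\re}$. Setting $\bx=\re\tilde\bx$ turns $\mathcal{R}^{\re}_D(s)$ into $\re^2$ times $\mathcal{R}^1_D(\re s)$, and the cutoffs $\chi_{4m\re},\chi_{2m\re}$ become $\chi_{4m},\chi_{2m}$ — \emph{$\re$-independent} cutoffs, which is exactly why the Krein reduction was performed in the first place (the remark in the excerpt about not being able to scale naively because of $\chi_{b/\re}$ now does not bite). Then I invoke the known resolvent estimates for the unit obstacle — via Assumption \ref{assumption:star_shaped} these are the Morawetz/Melrose-Sjöstrand non-trapping bounds, giving a pole-free strip $\Re\sigma>-c_\Omega$ for $\sigma\mapsto\chi_{4m}\mathcal{R}^1_D(\sigma)\chi_{2m}$ with polynomial growth $\lesssim(1+|\sigma|)^{\#}\mathrm e^{C|\Re\sigma|_-}$. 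Substituting $\sigma=\re s$ converts the strip $\Re\sigma>-c_\Omega$ into $\Re s>-c_\Omega\re^{-1}$, which is precisely $\mathscr D^{\re}$, and converts $(1+|\sigma|)$ into $(1+|s\re|)$; the exponential $\mathrm e^{C|\Re\sigma|_-}=\mathrm e^{C\re|\Re s|_-}$ contributes to the $\widetilde c_\Omega^+\re$ term in the exponent. Tracking the powers of $\re$ from the scaling ($\re^2$ from the resolvent, compensated partially by derivative losses in $[\Delta,\chi_{2m\re}]$ and $L_{m\re}$, each $\nabla\chi_{c\re}$ costing $\re^{-1}$ and each $\chi_{c\re}$ on a ball of radius $O(\re)$ contributing $\re^{3/2}$ in $L^2$) is the bookkeeping that produces the final single power $\re$ and the cubic factor $(1+|s\re|)^3$.

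The third step handles the free-resolvent pieces. For $\mathcal{R}_0(s)$ in three dimensions one has the explicit kernel $\mathrm e^{-s|\bx-\by|}/(4\pi|\bx-\by|)$; since the source is supported in $\mathcal O_0\subset B_{R_0,r_0}$ and the trace is taken on $\Gamma^{\re}\subset B_{\re}$, the distance $|\bx-\by|$ is bounded below by $r_0-\re$ and above by $R_0+\re$, which is the origin of the factor $c_{\mathcal O_0}=R_0^{3/2}\max(1,r_0^{-2})$ and of the travel-time exponential $\mathrm e^{|\Re s|_-(R_0+\dots)}$; the analogous near/far distances between $\operatorname{supp}\chi_{2m\re}$ (a ball of radius $O(\re)$) and $B_{r,R}$, resp. $B_{R_0}$, give the factor $C_{r,R}=R^{3/2}r^{-1}$. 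The cut-off free resolvent $\mathcal{R}^{\re}_0(s)=\mathbb{R}^{\re}\mathcal{R}_0(s)\mathbb{E}^{\re}$ is entire in $s$ and, with both ends localized by compactly supported cutoffs at mutually bounded distance, obeys the same type of bound with an exponential in $|\Re s|_-$ times a polynomial in $|s|$; the commutator $[\Delta,\chi_{2m\re}]$ again costs a controlled power of $\re$. Multiplying the three estimates (outer cutoff $\chi_{r,R}$ applied, inner cutoff $\chi_{2m\re}=1$ on the relevant support so it can be freely inserted, and $\chi_{R_0}$ absorbing $\gamma_0^{\re}\mathcal{R}_0(s)$) and collecting constants yields the stated bound, with $\widetilde c_\Omega^+$ the sum of the $\re$-coefficients coming from the unit-obstacle strip width and from the $O(\re)$ geometry of the inner cutoffs.

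\textbf{Main obstacle.} The delicate point is the \emph{analytic continuation in the right topology}: $\mathcal{R}^{\re}_D(s)$ itself has no extension past $\Re s=0$ as an $L^2\to L^2$ operator, only the \emph{cut-off} resolvent $\chi_b\mathcal{R}^{\re}_D(s)\chi_a$ does, and only for $\re$-independent cutoffs after scaling. One must therefore be careful that every $\mathcal{R}^{\re}_D(s)$ appearing in Proposition \ref{prop:De} is genuinely sandwiched between such cutoffs before continuing — the reason the formula was engineered to produce $\chi_{4m\re}\mathcal{R}^{\re}_D(s)\chi_{2m\re}$ rather than a bare resolvent — and that the composition with the entire objects $\mathcal{R}^{\re}_0(s)$, $\mathcal{N}^{\re}_D(z)\gamma_0^{\re}\mathcal{R}_0(s)$ (which are fine on all of $\mathbb{C}$, resp. on $\mathbb{C}^+$ in the $s$ variable with $z$ frozen) preserves analyticity on $\mathscr D^{\re}$. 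A secondary technical nuisance is verifying that the unit-obstacle non-trapping resolvent bound genuinely has the polynomial-in-$|\sigma|$ and exponential-in-$|\Re\sigma|_-$ form claimed, with the exponent constant depending only on the cutoff supports (hence on $m$, hence only on $\Omega$) — this is where Assumption \ref{assumption:star_shaped}, via \cite{melrose} and the references in Remark \ref{rem:reason}, is used, and one must quote it in a form uniform in the small half-plane $\Re\sigma>-c_\Omega$.
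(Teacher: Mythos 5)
Your overall architecture is the same as the paper's: analyticity on $\mathbb{C}^+$ from the factorization, continuation into $\mathscr{D}^{\re}$ via the sandwiched resolvent $\chi_{4m\re}\mathcal{R}^{\re}_D(s)\chi_{2m\re}$ rescaled to the unit obstacle (where the non-trapping bounds of Assumption \ref{assumption:star_shaped} give a pole-free strip $\Re\sigma>-c_\Omega$, turned into $\Re s>-c_\Omega\re^{-1}$ by $\sigma=\re s$), explicit kernel bounds for the free-resolvent pieces, and multiplication of the factors with commutator bookkeeping. That part of the plan is sound and is exactly what the paper does.

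The genuine gap is your choice to freeze the auxiliary parameter $z$ at a fixed value of order one. The paper takes $z=\re^{-1}$, and this is not cosmetic: the available estimate on the exterior lifting operator (Theorem \ref{prop:nz}) gives $\|\mathcal{N}^{\re}_D(\nu)\mathbb{P}^{\re}_0\varphi\|_{L^2}\lesssim \re^{-1/2}\nu^{-1}\sqrt{\max(1,\nu\re)}\,\|\mathbb{P}^{\re}_0\varphi\|_{H^{1/2}}$, which at $\nu=\re^{-1}$ yields the factor $\re^{1/2}$ but at $\nu=1$ only $\re^{-1/2}$ — a loss of a full power of $\re$ in the term $A^{\re}_0=\mathcal{N}^{\re}_D(z)\gamma_0^{\re}\mathcal{R}_0(s)$ (whose boundary data is $O(\re)$ by Proposition \ref{prop:nz_aux}). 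Since the operator $L_{m\re}(s,z)$ unavoidably costs $\re^{-2}$ (from $\Delta\chi_{m\re}$, independently of $z$), and the outer factor $\chi_{r,R}\mathcal{R}^{\re}_0(s)\chi_{8\re}$ contributes only $\re^{3/2}$, your bookkeeping with $z=O(1)$ lands at $O(1)$ in the far field instead of the stated $O(\re)$; even replacing Theorem \ref{prop:nz} by the sharp small-obstacle behaviour of $\mathcal{N}^{\re}_D(1)$ (capacitance-type decay $\sim\re/|\bx|$) only improves this to $O(\re^{1/2})$, still short of the claim, and insufficient for Theorems \ref{theorem:answer1}--\ref{theorem:answer2}. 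The point of $z=\re^{-1}$ (i.e.\ unit frequency after rescaling, obtained in the paper by optimizing over $z>0$) is that the auxiliary field $p^{\re}_{sc}(s,z)$ is then exponentially confined to the $O(\re)$-neighbourhood where the commutators act, so that $\|A^{\re}_0\|\sim\re^{3/2}$, $\|\nabla A^{\re}_0\|\sim\re^{1/2}$, and the $\re^{-2}$, $\re^{-1}$ commutator costs are compensated, producing the single power $\re$ and the factor $(1+|s\re|)^3$. Your stated motivation ("kills the only place where a large spectral parameter could enter the $\mathcal{N}^{\re}_D$ factor") is also off target: $\mathcal{N}^{\re}_D$ never sees the running parameter $s$, and the bound of Theorem \ref{prop:nz} is explicit and uniform in $\nu>0$, so nothing is gained by keeping $z$ small — to rescue your route at fixed $z$ you would instead need a genuinely local (near-$\Gamma^{\re}$) estimate of $\mathcal{N}^{\re}_D(z)$ on the annulus $\operatorname{supp}\nabla\chi_{m\re}$, which is not among the tools you invoke.
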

	The proof of this proposition can be found in Section \ref{sec:proof}; it is based on several preliminary results. In particular, we start with the expression for $\mathcal{D}^{\re}_D$ of Proposition \ref{prop:De}. 
Bounding norm of $\chi_{\rho}\mathcal{D}^{\re}_D(s)\chi_{R_0}$ amounts to fixing $z\in \mathbb{C}^+$, $m\geq 2$, and bounding norms of the following operators (possibly localized): $\gamma_0^{\re}\mathcal{R}_0(s)$ (Proposition \ref{prop:nz_aux}), $\mathcal{N}_D^{\re}(z)$ (Theorem \ref{prop:nz}),  $\chi_{4m\re}\mathcal{R}^{\re}_D(s)\chi_{2m\re}$ (Co\-rollary \ref{cor:analyticity}) and $\mathcal{R}_0^{\re}(s)$ (Proposition \ref{prop:free_resolvent}). This is a subject of the following sections. 
\subsubsection{Estimates involving the free resolvent}
In this section we summarize all the estimates related to the free resolvent. In our statements we will often resort to the fact that the free (cut-off) resolvent $s\mapsto \chi_R\mathcal{R}_0(s)\chi_R\in \mathcal{L}(L^2(\mathbb{R}^3), H^j(\mathbb{R}^3))$, $j=0,1,2$, is an entire function of $s\in \mathbb{C}$, see e.g. \cite[Theorem 3.1]{dyatlov_zworski}. We start by studying $L^2$-bounds in $\Omega^{\re,c}$.
\begin{proposition}[Bounds in $\Omega^{\re,c}$]
	\label{prop:free_resolvent}
	Let $c>0$, $R>r>c\re$. Then for all $v\in L^2(\mathbb{R}^3)$, s.t. $\operatorname{supp}v\subseteq \overline{B_{c\re}}$,  
	\begin{align*}
		\|\mathcal{R}_0(s)v\|_{L^2(B_{r,R})}\lesssim \mathrm{e}^{|\Re s|_{-}(R+c\re)} \frac{R^{3/2}}{r-c\varepsilon}(c\varepsilon)^{3/2}\|v\|_{L^2(B_{c\re})}, \quad s\in \mathbb{C}.
	\end{align*} 
\end{proposition}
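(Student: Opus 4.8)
\textbf{Proof plan for Proposition \ref{prop:free_resolvent}.}

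The plan is to work with the explicit convolution kernel of the free resolvent in $\mathbb{R}^3$. Recall that for $s\in\mathbb{C}^+$ one has $\mathcal{R}_0(s)v=G_s\ast v$ with the outgoing Green's function $G_s(\bx)=\frac{\mathrm{e}^{-s|\bx|}}{4\pi|\bx|}$, and that this formula (and the resulting bound) extends to all $s\in\mathbb{C}$ by analytic continuation once $v$ has compact support and we measure the output on a bounded set. First I would fix $\bx\in B_{r,R}$ and $\by\in\operatorname{supp}v\subseteq\overline{B_{c\re}}$; since $|\bx|\ge r>c\re\ge|\by|$, the two points never collide, so the kernel is smooth on the relevant domain and $|\bx-\by|\ge |\bx|-|\by|\ge r-c\re>0$. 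This gives the pointwise bound
\begin{align*}
|G_s(\bx-\by)|=\frac{\mathrm{e}^{-\Re s\,|\bx-\by|}}{4\pi|\bx-\by|}\le \frac{\mathrm{e}^{|\Re s|_-\,|\bx-\by|}}{4\pi(r-c\re)}\le \frac{\mathrm{e}^{|\Re s|_-(R+c\re)}}{4\pi(r-c\re)},
\end{align*}
using $|\bx-\by|\le |\bx|+|\by|\le R+c\re$ in the exponent and monotonicity of $t\mapsto\mathrm{e}^{|\Re s|_- t}$.

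Next I would estimate the $L^2$ norm of the convolution directly. Writing $(\mathcal{R}_0(s)v)(\bx)=\int_{B_{c\re}}G_s(\bx-\by)v(\by)\,d\by$ and applying Cauchy--Schwarz in $\by$ against the constant bound above,
\begin{align*}
|(\mathcal{R}_0(s)v)(\bx)|\le \frac{\mathrm{e}^{|\Re s|_-(R+c\re)}}{4\pi(r-c\re)}\,|B_{c\re}|^{1/2}\,\|v\|_{L^2(B_{c\re})}
\lesssim \frac{\mathrm{e}^{|\Re s|_-(R+c\re)}}{r-c\re}(c\re)^{3/2}\|v\|_{L^2(B_{c\re})},
\end{align*}
where $|B_{c\re}|^{1/2}=(\tfrac{4}{3}\pi)^{1/2}(c\re)^{3/2}\lesssim (c\re)^{3/2}$. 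Then integrating the square of this (constant in $\bx$) bound over $B_{r,R}\subseteq B_R$ contributes a factor $|B_R|^{1/2}\lesssim R^{3/2}$, yielding exactly
\begin{align*}
\|\mathcal{R}_0(s)v\|_{L^2(B_{r,R})}\lesssim \mathrm{e}^{|\Re s|_-(R+c\re)}\,\frac{R^{3/2}}{r-c\re}\,(c\re)^{3/2}\,\|v\|_{L^2(B_{c\re})}.
\end{align*}

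There is no serious obstacle here; the only point requiring a word of care is the justification for $s$ with $\Re s\le 0$, where $G_s\ast v$ is no longer the resolvent applied to an $L^2$ function on all of $\mathbb{R}^3$ but rather the analytic continuation of $\chi_R\mathcal{R}_0(s)\chi_{c\re}$, which is entire (cited from \cite[Theorem 3.1]{dyatlov_zworski}). For such $s$ the convolution integral still converges absolutely because $v$ is compactly supported and $\bx$ is confined to a bounded set away from $\operatorname{supp}v$, and by uniqueness of analytic continuation it agrees with the operator in the statement on $B_{r,R}$; hence the pointwise kernel bound and the two Cauchy--Schwarz steps above apply verbatim for all $s\in\mathbb{C}$. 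I would remark that the separation $|\bx-\by|\ge r-c\re$ is precisely where the hypothesis $r>c\re$ enters, ensuring the $\frac{1}{r-c\re}$ factor is finite and that no singularity of the kernel is encountered. This completes the proof.
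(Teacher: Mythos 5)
Your proposal is correct and follows essentially the same route as the paper: the explicit kernel formula, a pointwise bound using $r-c\re\leq\|\bx-\by\|\leq R+c\re$, Cauchy--Schwarz in $\by$ to produce the factor $(c\re)^{3/2}\|v\|_{L^2}$, and the volume factor $R^{3/2}$ from passing from $L^{\infty}(B_{r,R})$ to $L^2(B_{r,R})$. Your extra remark on justifying the kernel formula for $\Re s\leq 0$ via analytic continuation of the cut-off resolvent is a welcome clarification the paper leaves implicit.
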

\begin{proof}
	We start with the explicit identity
	\begin{align}
		\label{eq:identR0}
		\left(\mathcal{R}_0(s)v\right)(\bx)=\int_{B_{c\re}}\frac{\mathrm{e}^{-s\|\bx-\by\|}}{4\pi\|\bx-\by\|}v(\by)d\by,
	\end{align}
	which we then estimate in two different ways. First of all, using a very rough $L^{\infty}$-bound on the integral kernel, and next the Cauchy-Schwarz inequality, we obtain that
	\begin{align*}
		\|\mathcal{R}_0(s)v\|_{L^{\infty}(B_{r,R})}\leq \frac{\mathrm{e}^{|\Re s|_{-}(R+c\re)}}{4\pi (r-c\re)}\|v\|_{L^1(B_{c\re})}\lesssim \frac{\mathrm{e}^{|\Re s|_{-}(R+c\re)}}{ (r-c\re)}(c\re)^{3/2}\|v\|_{L^2(B_{c\re})}.
	\end{align*}  
The desired far-field bound follows from    $\|\mathcal{R}_0(s)v\|_{L^{2}(B_{r,R})}\lesssim R^{3/2}\|\mathcal{R}_0(s)v\|_{L^{\infty}(B_{r,R})}$.
	
\end{proof}
Next, we will need bounds on $\gamma_0^{\re}\mathcal{R}_0(s)f$. We will need them written in a rather peculiar manner, adapted to the statement of Theorem \ref{prop:nz} that will follow later. 
Let us introduce auxiliary projection operators defined onto the space of constants and functions orthogonal to constants, namely,
\begin{align*}\mathbb{S}_0 := \operatorname{span}\{1\}, \quad H^{1/2}_{\perp}(\Gamma^{\re}) := \{ \psi \in H^{1/2}(\Gamma^{\re}) : (\psi, 1)_{L^2(\Gamma^{\re})} = 0 \}.
\end{align*}
Let $\mathbb{P}_0^{\re}$ be an $L^2$-orthogonal projection onto the space $\mathbb{S}_0$, and 
$\mathbb{P}_{\perp}^{\re} = \mathbb{I} - \mathbb{P}_0^{\re}$ onto the space $H^{1/2} _{\perp}(\Gamma^{\re})$. The proposition below shows that $\mathbb{P}_0^{\re}\gamma_0^{\re}\mathcal{R}_0(s)f$ scales as $O(\re)$, while $\mathbb{P}_{\perp}^{\re}\gamma_0^{\re}\mathcal{R}_0(s)f$ as $O(\re^{3/2})$. 
\begin{proposition}
	\label{prop:nz_aux}
	Assume that $\operatorname{supp}f \subseteq \overline{\mathcal{O}}_0=\overline{B_{r_0,R_0}}$, where $R_0>r_0>1$ are like in \eqref{eq:ic}. 
Then, for $0<\re\leq 1$, $s\in \mathbb{C}$, 
\begin{align*}
	&\|\mathbb{P}_0^{\re}\gamma_0^{\re}\mathcal{R}_0(s)f\|_{L^2(\Gamma^{\re})}\lesssim \re R_0^{3/2}(r_0-\re)^{-1}\mathrm{e}^{|\Re s|_{-}(R_0+\re)}\|f\|_{L^2(\mathcal{O}_0)},\\
	&\|\mathbb{P}_{\perp}^{\re}\gamma_0^{\re}\mathcal{R}_0(s)f\|_{H^{1/2}(\Gamma^{\re})}\lesssim \re^{3/2}R_0^{3/2}(r_0-\re)^{-1}\mathrm{e}^{|\Re s|_{-}(R_0+\re)}(|s|+(r_0-\re)^{-1})\|f\|_{L^2(\mathcal{O}_0)}.		
\end{align*}	
\end{proposition}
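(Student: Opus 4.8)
The plan is to work directly from the explicit convolution representation of the free resolvent already used in the proof of Proposition~\ref{prop:free_resolvent},
\begin{align*}
(\mathcal{R}_0(s)f)(\bx)=\int_{\mathcal{O}_0}\frac{\mathrm{e}^{-s\|\bx-\by\|}}{4\pi\|\bx-\by\|}f(\by)\,d\by .
\end{align*}
Since $\Gamma^{\re}=\partial\Omega^{\re}\subset\overline{B_{\re}}$ whereas $\operatorname{supp}f\subseteq\overline{\mathcal{O}}_0=\overline{B_{r_0,R_0}}$, for $\bx\in\Gamma^{\re}$, $\by\in\mathcal{O}_0$ one has $r_0-\re\leq\|\bx-\by\|\leq R_0+\re$, so the kernel and all its $\bx$-derivatives stay bounded on $B_{\re}$ with fully explicit constants. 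As in Proposition~\ref{prop:free_resolvent}, I would first record, using a rough $L^\infty$-bound on the kernel, Cauchy--Schwarz, $|\mathcal{O}_0|\lesssim R_0^3$, and then differentiating the kernel together with $|\nabla_{\bx}(\mathrm{e}^{-s\|\bx-\by\|}\|\bx-\by\|^{-1})|\lesssim(|s|+\|\bx-\by\|^{-1})\mathrm{e}^{|\Re s|_{-}\|\bx-\by\|}\|\bx-\by\|^{-1}$,
\begin{align*}
\|\mathcal{R}_0(s)f\|_{L^{\infty}(B_{\re})}&\lesssim\frac{\mathrm{e}^{|\Re s|_{-}(R_0+\re)}}{r_0-\re}\|f\|_{L^1(\mathcal{O}_0)}\lesssim\frac{R_0^{3/2}\,\mathrm{e}^{|\Re s|_{-}(R_0+\re)}}{r_0-\re}\|f\|_{L^2(\mathcal{O}_0)},\\
\|\nabla\mathcal{R}_0(s)f\|_{L^{\infty}(B_{\re})}&\lesssim\frac{R_0^{3/2}\,\mathrm{e}^{|\Re s|_{-}(R_0+\re)}}{r_0-\re}\Big(|s|+\frac{1}{r_0-\re}\Big)\|f\|_{L^2(\mathcal{O}_0)}.
\end{align*}

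For the first estimate, I would note that $\mathbb{P}_0^{\re}g$ is the constant function with value $\bar g:=|\Gamma^{\re}|^{-1}\int_{\Gamma^{\re}}g\,d\sigma$, so $\|\mathbb{P}_0^{\re}\gamma_0^{\re}\mathcal{R}_0(s)f\|_{L^2(\Gamma^{\re})}=|\bar g|\,|\Gamma^{\re}|^{1/2}$. Because $\Gamma^{\re}$ is the dilation of the fixed surface $\Gamma$ by $\re$, $|\Gamma^{\re}|^{1/2}=\re\,|\Gamma|^{1/2}\lesssim\re$, while $|\bar g|\leq\|g\|_{L^{\infty}(\Gamma^{\re})}\leq\|\mathcal{R}_0(s)f\|_{L^{\infty}(B_{\re})}$; inserting the first pointwise bound gives the claimed $O(\re)$ estimate.

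For the second estimate I would use $\mathbb{P}_{\perp}^{\re}g=g-\bar g$ and pass to the reference configuration via $\bx\mapsto\bx/\re$. The rescaled function $\hat g(\hat\bx):=g(\re\hat\bx)-\bar g$, $\hat\bx\in\Gamma$, has zero mean on $\Gamma$ and satisfies $\|\nabla_{\Gamma}\hat g\|_{L^{\infty}(\Gamma)}\leq\re\|\nabla\mathcal{R}_0(s)f\|_{L^{\infty}(B_{\re})}$. The zero-mean Poincaré inequality on the fixed compact connected surface $\Gamma$ together with the embedding $H^1(\Gamma)\hookrightarrow H^{1/2}(\Gamma)$ then yields $\|\hat g\|_{H^{1/2}(\Gamma)}\lesssim\|\nabla_{\Gamma}\hat g\|_{L^{\infty}(\Gamma)}$, with a constant depending only on $\Omega$. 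It remains to track how the $H^{1/2}(\Gamma^{\re})$-norm transforms under the dilation: a change of variables shows that its $L^2$-component picks up a factor $\re$ and its Gagliardo seminorm a factor $\re^{1/2}$, so for $\re\leq1$ one obtains $\|\mathbb{P}_{\perp}^{\re}g\|_{H^{1/2}(\Gamma^{\re})}\lesssim\re^{1/2}\|\hat g\|_{H^{1/2}(\Gamma)}\lesssim\re^{3/2}\|\nabla\mathcal{R}_0(s)f\|_{L^{\infty}(B_{\re})}$, and the gradient estimate above closes the bound.

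The only genuinely delicate point is the last one, namely pinning down the scaling of the $H^{1/2}(\Gamma^{\re})$-norm under $\bx\mapsto\re\bx$ and making sure that every surviving constant depends on the fixed geometry $\Omega$ (equivalently, on $\Gamma$) and not on $\re$: with the intrinsic Gagliardo norm this is the elementary computation sketched above, and if $H^{1/2}(\Gamma^{\re})$ is instead normed through layer potentials or the Laplace--Beltrami operator one should compare it with an $\re$-independent reference norm on $\Gamma$ and quote the equivalence constants. Everything else — the explicit kernel and gradient bounds, the Cauchy--Schwarz step, and the Poincaré inequality on $\Gamma$ — is routine.
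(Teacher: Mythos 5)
Your proposal is correct, and its skeleton matches the paper's: the paper proves exactly your two pointwise bounds on $\mathcal{R}_0(s)f$ and $\nabla\mathcal{R}_0(s)f$ on $B_{\re}$ (its Lemma \ref{lem:rz}, by the same kernel/Cauchy--Schwarz argument), handles the $\mathbb{P}_0^{\re}$-part exactly as you do (mean value of the trace times $\|1\|_{L^2(\Gamma^{\re})}\lesssim\re$), and then combines these with an auxiliary trace lemma (Lemma \ref{lem:rz0}). The one place where you genuinely deviate is the $\mathbb{P}_{\perp}^{\re}$-part: the paper stays on $\Gamma^{\re}$, quotes an $\re$-uniform norm--seminorm equivalence $\|\mathbb{P}_{\perp}^{\re}\gamma_0^{\re}u\|_{H^{1/2}(\Gamma^{\re})}\lesssim|\gamma_0^{\re}u|_{H^{1/2}(\Gamma^{\re})}$ from \cite[Proposition 4.3]{mk_as}, and then bounds the Sobolev--Slobodeckij seminorm directly via the mean value theorem and a change of variables giving the factor $\re^{3/2}\|\nabla u\|_{L^{\infty}(B_{\re})}$; you instead rescale to the fixed surface $\Gamma$, use the zero-mean Poincar\'e inequality and $H^1(\Gamma)\hookrightarrow H^{1/2}(\Gamma)$ there, and track the scaling of the $L^2$-part (factor $\re$) and Gagliardo seminorm (factor $\re^{1/2}$) separately. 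Both routes yield the same $\re^{3/2}$; yours is self-contained (it effectively re-proves the uniform equivalence the paper cites) at the price of invoking connectedness of $\Gamma$ (true here, since $\Omega$ is a bounded smooth domain with connected exterior) and of fixing the Gagliardo realization of the $H^{1/2}(\Gamma^{\re})$-norm, a caveat you correctly flag and which is consistent with the paper's own use of the Sobolev--Slobodeckij seminorm.
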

This proposition is immediate from Lemmas \ref{lem:rz0}, \ref{lem:rz}, proven below, thus we leave its proof to the reader.  
\begin{lemma}
	\label{lem:rz0}
	Let $u\in W^{1,\infty}(\Omega^{\re})$. Then 
	\begin{align*}
		&\|\mathbb{P}_0^{\re}\gamma_0^{\re}u\|_{L^2(\Gamma^{\re})}\lesssim \varepsilon\|u\|_{L^{\infty}(\Omega^{\re})},\qquad 
		\|\mathbb{P}^{\re}_{\perp}\gamma_0^{\re}u\|_{H^{1/2}(\Gamma^{\re})}\lesssim \varepsilon^{3/2}\|\nabla u\|_{L^{\infty}(B_{\re})}.
	\end{align*}
\end{lemma}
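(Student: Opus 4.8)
The plan is to treat the two bounds separately, the first being elementary and the second relying on a dilation to the $\re$-independent obstacle combined with a Poincar\'e-type estimate on $\Gamma:=\partial\Omega$.

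\textbf{First bound.} Since $\mathbb{P}_0^{\re}$ is the $L^2(\Gamma^{\re})$-orthogonal projection onto constants, $\mathbb{P}_0^{\re}\gamma_0^{\re}u$ is the constant function whose value is the average of $\gamma_0^{\re}u$ over $\Gamma^{\re}$; as $\mathbb{P}_0^{\re}$ is an $L^2$-contraction one gets
\[
\|\mathbb{P}_0^{\re}\gamma_0^{\re}u\|_{L^2(\Gamma^{\re})}\le\|\gamma_0^{\re}u\|_{L^2(\Gamma^{\re})}\le|\Gamma^{\re}|^{1/2}\|\gamma_0^{\re}u\|_{L^{\infty}(\Gamma^{\re})}\le|\Gamma^{\re}|^{1/2}\|u\|_{L^{\infty}(\Omega^{\re})},
\]
using that $u\in W^{1,\infty}(\Omega^{\re})$ has a continuous representative up to $\Gamma^{\re}$. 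Since $\Gamma^{\re}=\re\Gamma$ we have $|\Gamma^{\re}|=\re^{2}|\Gamma|$ with $|\Gamma|$ depending only on $\Omega$, which yields the claim.

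\textbf{Reduction and scaling for the second bound.} Set $\tilde u(\bx):=u(\re\bx)$, so that under the dilation $\bx\mapsto\re\bx$ the quantity $\mathbb{P}_{\perp}^{\re}\gamma_0^{\re}u=\gamma_0^{\re}u-\langle\gamma_0^{\re}u\rangle_{\Gamma^{\re}}$ corresponds to $w:=\gamma_0\tilde u-\langle\gamma_0\tilde u\rangle_{\Gamma}$, i.e. the trace of $\tilde u$ with its mean over $\Gamma$ removed. The first step is to record how the $H^{1/2}(\Gamma^{\re})$ norm behaves: in the Gagliardo characterization on the surface, the substitution $\bx=\re\hat\bx$ replaces $d\sigma$ by $\re^{2}d\sigma$ and $|\bx-\by|^{3}$ by $\re^{3}|\hat\bx-\hat\by|^{3}$, giving $|g|_{H^{1/2}(\Gamma^{\re})}=\re^{1/2}|\hat g|_{H^{1/2}(\Gamma)}$ for the seminorm and $\|g\|_{L^2(\Gamma^{\re})}=\re\|\hat g\|_{L^2(\Gamma)}$ for the $L^2$ part, where $\hat g(\hat\bx):=g(\re\hat\bx)$; since $\re\le1$, altogether $\|g\|_{H^{1/2}(\Gamma^{\re})}\le\re^{1/2}\|\hat g\|_{H^{1/2}(\Gamma)}$. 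Hence it suffices to prove $\|w\|_{H^{1/2}(\Gamma)}\lesssim\|\nabla\tilde u\|_{L^{\infty}}$, after which the chain rule $\|\nabla\tilde u\|_{L^{\infty}(B_1)}=\re\|\nabla u\|_{L^{\infty}(B_{\re})}$ supplies the extra factor $\re$, producing $\re^{1/2}\cdot\re=\re^{3/2}$.

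\textbf{Poincar\'e estimate on the fixed surface.} The function $\tilde u$ is Lipschitz on $\overline{\Omega}$ with constant $L\lesssim\|\nabla\tilde u\|_{L^{\infty}}$ (using smoothness of $\Omega$), and $\operatorname{diam}\Gamma\le\operatorname{diam}\Omega\le1$. Thus $|w(\hat\bx)|\le L\operatorname{diam}\Gamma\le L$, whence $\|w\|_{L^2(\Gamma)}\le|\Gamma|^{1/2}L\lesssim L$; and since constants have zero seminorm, $|w|_{H^{1/2}(\Gamma)}^{2}=|\gamma_0\tilde u|_{H^{1/2}(\Gamma)}^{2}\le L^{2}\iint_{\Gamma\times\Gamma}|\hat\bx-\hat\by|^{-1}d\sigma(\hat\bx)d\sigma(\hat\by)$, the last integral being finite because $\Gamma$ is a smooth compact surface (the kernel $|\hat\bx-\hat\by|^{-1}$ is integrable in two surface dimensions, uniformly in $\hat\bx$). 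Combining with the scaling relations of the previous paragraph gives the asserted $\re^{3/2}\|\nabla u\|_{L^{\infty}(B_{\re})}$ bound with a constant depending only on $\Omega$.

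\textbf{Main obstacle.} Nothing here is deep, but the delicate point is keeping the $H^{1/2}$ scaling honest: the $L^2$ and the Gagliardo-seminorm parts scale with different powers of $\re$, so one must make sure the worse power $\re^{1/2}$ is retained, and, crucially, that the mean is subtracted on the fixed surface $\Gamma$ (not on $\Gamma^{\re}$) so that the Lipschitz/Poincar\'e argument is applied on an $\re$-independent geometry and all constants remain independent of $\re$. The rest is bookkeeping.
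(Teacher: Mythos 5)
Your proof is correct. For the first bound you use that $\mathbb{P}_0^{\re}$ is an $L^2$-contraction together with $|\Gamma^{\re}|^{1/2}\lesssim\re$, which is interchangeable with the paper's computation of the projection value followed by $\|1\|_{L^2(\Gamma^{\re})}\lesssim\re$. For the second bound your route differs in one genuine respect: the paper first invokes an $\re$-uniform Poincar\'e-type equivalence from an external reference (Proposition 4.3 of the cited work of the first author), namely $\|\mathbb{P}_{\perp}^{\re}\gamma_0^{\re}u\|_{H^{1/2}(\Gamma^{\re})}\lesssim|\gamma_0^{\re}u|_{H^{1/2}(\Gamma^{\re})}$ with a constant independent of $\re$, and only then estimates the Slobodeckij seminorm by the mean-value theorem and the change of variables $\bx=\re\hat\bx$, producing the factor $\re^{3}$ under the square. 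You instead rescale to the fixed surface $\Gamma$ at the outset, record the exact scalings $\|g\|_{L^2(\Gamma^{\re})}=\re\|\hat g\|_{L^2(\Gamma)}$ and $|g|_{H^{1/2}(\Gamma^{\re})}=\re^{1/2}|\hat g|_{H^{1/2}(\Gamma)}$, and then prove the needed control of the full norm of the mean-zero trace directly on the $\re$-independent geometry (sup bound for the $L^2$ part, Lipschitz bound against the weakly singular kernel $\|\hat\bx-\hat\by\|^{-1}$ for the seminorm). In substance this amounts to reproving the cited uniform equivalence in the special case you need, so your argument is self-contained where the paper's is shorter by citation; both hinge on the same two ingredients (mean-value/Lipschitz estimate of the Gagliardo seminorm and the $\re$-scaling of surface integrals), and both share the paper's mild looseness that $u$ must in fact be defined on all of $B_{\re}$ (as it is in the application) for the gradient bound over $B_{\re}$ and the segment argument to make sense. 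Your bookkeeping of the two different powers of $\re$ in the $L^2$ and seminorm parts, and the use of $\re\le1$ to retain the worse power $\re^{1/2}$, is exactly right.
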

\begin{proof}
	To prove the first inequality, we use an explicit characterization of the projection operator $\mathbb{P}_0^{\re}$:
	\begin{align*}
		|\mathbb{P}_0^{\re} \gamma_0^{\re}u| = \frac{|(\gamma_0^{\re}u, 1)_{L^2(\Gamma^{\re})}|}{\| 1 \|^2_{ L^2(\Gamma^{\re})}} \leq \frac{\| \gamma_0^{\re}u \|_{L^2(\Gamma^{\re})}}{\| 1 \|_{L^2(\Gamma^{\re})}} \leq \| u \|_{L^{\infty}(\Omega^{\re})}.
	\end{align*}
	The desired inequality follows from the above and $\| 1 \|_{L^2(\Gamma^{\re})} \lesssim \re$. 
	
	To show the second bound in the statement, we use \cite[Proposition 4.3]{mk_as}:  
	\begin{align*}
		\| \mathbb{P}_{\perp}^{\re} \gamma^{\re}_0 u \|_{H^{1/2}(\Gamma^{\re})} \leq C |\mathbb{P}_{\perp}^{\re}\gamma^{\re}_0 u|_{H^{1/2}(\Gamma^{\re})}=C|\gamma^{\re}_0 u|_{H^{1/2}(\Gamma^{\re})}, \quad \forall 0<\re\leq 1. 
	\end{align*}
	Using the definition of the Sobolev-Slobodeckij seminorm and the mean-value theorem, 
	\begin{align*}
|\gamma^{\re}_0 u|_{H^{1/2}(\Gamma^{\re})}&= \iint_{\Gamma^{\re} \times \Gamma^{\re}} \frac{|u(\bx) - u(\by)|^2}{\| \bx - \by \|^3} d\Gamma_{\bx} d\Gamma_{\by} \leq  \re^3 \iint_{\Gamma^1 \times \Gamma^1} \frac{  \| \nabla u \|^2_{L^{\infty}({B_{\re}})}}{\| \bx - \by \|} d\Gamma_{\bx} d\Gamma_{\by}.
	\end{align*}
The Lebesgue integral above is finite as a weakly-singular integral. 
\end{proof}
\begin{lemma}
	\label{lem:rz}
	Assume that $f$ is like in Proposition \ref{prop:nz_aux}. Then, for all $s\in \mathbb{C}$, 
	\begin{align*}
	&\|\mathcal{R}_0(s)f\|_{L^{\infty}(B_{\re})}\lesssim R_0^{3/2}(r_0-\re)^{-1}\mathrm{e}^{|\Re s|_{-}(R_0+\re)}\|f\|_{L^2(\mathcal{O}_0)},\\
	&\|\nabla \mathcal{R}_0(s)f\|_{L^{\infty}(B_{\re})}\lesssim R_0^{3/2} (|s|+(r_0-\re)^{-1}) (r_0-\re)^{-1}\mathrm{e}^{|\Re s|_{-}(R_0+\re)}\|f\|_{L^2(\mathcal{O}_0)}.
\end{align*}
\end{lemma}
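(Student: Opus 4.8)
The plan is to work directly from the explicit convolution representation \eqref{eq:identR0} of the free resolvent, exploiting that the observation point $\bx\in B_{\re}$ stays a fixed distance away from $\operatorname{supp}f\subseteq\overline{\mathcal{O}}_0=\overline{B_{r_0,R_0}}$, so that the integral kernel is smooth on the relevant set and no singularity has to be handled.

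First I would fix $\bx\in B_{\re}$ and write
\[
(\mathcal{R}_0(s)f)(\bx)=\int_{\mathcal{O}_0}\frac{\mathrm{e}^{-s\|\bx-\by\|}}{4\pi\|\bx-\by\|}f(\by)\,d\by .
\]
For $\by\in\mathcal{O}_0$ and $\bx\in B_{\re}$ with $\re\leq 1<r_0$ one has $r_0-\re\leq\|\bx-\by\|\leq R_0+\re$, hence $\mathrm{e}^{-\Re s\|\bx-\by\|}\leq \mathrm{e}^{|\Re s|_-(R_0+\re)}$ and $\|\bx-\by\|^{-1}\leq (r_0-\re)^{-1}$. Plugging these two elementary bounds on the kernel, applying the Cauchy--Schwarz inequality, and using $|\mathcal{O}_0|^{1/2}\lesssim R_0^{3/2}$ gives $\|\mathcal{R}_0(s)f\|_{L^\infty(B_{\re})}\lesssim (r_0-\re)^{-1}\mathrm{e}^{|\Re s|_-(R_0+\re)}\|f\|_{L^1(\mathcal{O}_0)}\lesssim R_0^{3/2}(r_0-\re)^{-1}\mathrm{e}^{|\Re s|_-(R_0+\re)}\|f\|_{L^2(\mathcal{O}_0)}$, which is the first estimate.

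For the gradient bound I would differentiate under the integral sign — legitimate because $\operatorname{dist}(B_{\re},\mathcal{O}_0)=r_0-\re>0$ makes the kernel $C^\infty$ on $B_{\re}\times\mathcal{O}_0$ — and use
\[
\nabla_{\bx}\frac{\mathrm{e}^{-s\|\bx-\by\|}}{4\pi\|\bx-\by\|}=-\frac{\mathrm{e}^{-s\|\bx-\by\|}\bigl(s\|\bx-\by\|+1\bigr)}{4\pi\|\bx-\by\|^2}\,\frac{\bx-\by}{\|\bx-\by\|},
\]
so that on $B_{\re}\times\mathcal{O}_0$ the modulus of the kernel of $\nabla\mathcal{R}_0(s)$ is at most $\tfrac{1}{4\pi}\mathrm{e}^{|\Re s|_-(R_0+\re)}\bigl(|s|(r_0-\re)^{-1}+(r_0-\re)^{-2}\bigr)=\tfrac{1}{4\pi}(r_0-\re)^{-1}\mathrm{e}^{|\Re s|_-(R_0+\re)}\bigl(|s|+(r_0-\re)^{-1}\bigr)$. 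The same Cauchy--Schwarz argument then yields the second estimate.

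There is no genuine obstacle here; the only two points deserving a word of care are the justification of differentiation under the integral sign (handled by the strict separation $r_0-\re>0$) and the uniform-in-$s$ control of the exponential factor, which is exactly why the bounds are phrased through the single quantity $|\Re s|_-$: for $\Re s\geq 0$ the exponential is $\leq 1$, while for $\Re s<0$ it is controlled using $\|\bx-\by\|\leq R_0+\re$.
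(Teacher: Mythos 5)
Your proposal is correct and follows essentially the same route as the paper: the explicit kernel representation of $\mathcal{R}_0(s)$, the pointwise bounds $r_0-\re\leq\|\bx-\by\|\leq R_0+\re$ on $B_{\re}\times\mathcal{O}_0$, and Cauchy--Schwarz with the volume factor $R_0^{3/2}$ (the paper writes out only the first bound and leaves the gradient estimate, which you carry out explicitly, as "similar"). Nothing to correct.
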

\begin{proof}
	We show one bound only, since the remaining one is obtained in a similar manner. By the Cauchy-Schwarz inequality,
\begin{align*}
	\|\mathcal{R}_0(s)f\|_{L^{\infty}(B_{\re})}&=\left\|\int_{\mathcal{O}_0}\frac{\mathrm{e}^{- s\|\vec{x}-\vec{y}\|}}{4\pi\|\vec{x}-\vec{y}\|}f(\vec{y})d\vec{y}\right\|_{L^{\infty}(B_{\re})}\\
	&\lesssim \frac{R_0^{3/2}\mathrm{e}^{|\Re s|_{-}(R_0+\re)}}{\operatorname{dist}(\operatorname{supp}f, B_{\re})}\|f\|_{L^2}\lesssim \frac{R_0^{3/2}\mathrm{e}^{|\Re s|_{-}(R_0+\re)}}{r_0-\re}\|f\|_{L^2}. 
\end{align*}
\end{proof}
\subsubsection{Estimates on $\mathcal{N}^{\varepsilon}_D(z)$}
We are interested in the estimates on the operator $\mathcal{N}_D^{\re}(z)$, for the case when $z>0$, where such  bounds are 'easy' to obtain. 
\begin{theorem}
	\label{prop:nz}
	Let $\nu>0$ and $\re\in (0,1]$. Then, for all $\varphi\in H^{1/2}(\Gamma^{\re})$: 
	\begin{align*}
		&\|\mathcal{N}_D^{\varepsilon}(\nu)\mathbb{P}_0^{\re}\varphi\|_{L^2(\Omega^{\re,c})}\lesssim \varepsilon^{-1/2}\nu^{-1}\sqrt{\max(1,\nu\varepsilon)}\|\mathbb{P}_0^{\re}\varphi\|_{H^{1/2}(\Gamma^{\varepsilon})},\\
		&\|\mathcal{N}_D^{\varepsilon}(\nu)\mathbb{P}_{\perp}^{\re}\varphi\|_{L^2(\Omega^{\re,c})}\lesssim \nu^{-1}\sqrt{\max(1,\nu\varepsilon)}\|\mathbb{P}_{\perp}^{\re}\varphi\|_{H^{1/2}(\Gamma^{\varepsilon})},\\
		&| \mathcal{N}_D^{\varepsilon}(\nu)\mathbb{P}_0^{\re}\varphi|_{H^1(\Omega^{\re,c})}\lesssim \varepsilon^{-1/2}\sqrt{\max(1,\nu\varepsilon)}\|\mathbb{P}_0^{\re}\varphi\|_{H^{1/2}(\Gamma^{\varepsilon})},\\
		&|\mathcal{N}_D^{\varepsilon}(\nu)\mathbb{P}_{\perp}^{\re}\varphi|_{H^1(\Omega^{\re,c})}\lesssim \sqrt{\max(1,\nu\varepsilon)}\|\mathbb{P}_{\perp}^{\re}\varphi\|_{H^{1/2}(\Gamma^{\varepsilon})}.
	\end{align*}
\end{theorem}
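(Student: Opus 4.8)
The plan is to prove all four bounds by the standard energy-estimate technique for the exterior Helmholtz problem at a positive (real) spectral parameter $\nu$, combined with a scaling argument that reduces everything to the case $\varepsilon=1$ for the localized estimates, while keeping track of the explicit $\varepsilon$-powers coming from the trace lifting. Concretely, I would first introduce a bounded lifting $E^{\re}:H^{1/2}(\Gamma^{\re})\to H^1(\Omega^{\re,c})$ of the boundary datum $\varphi$ that is supported in a fixed dilate $B_{c\re}$ of the obstacle, and write $w:=\mathcal{N}_D^{\re}(\nu)\varphi = E^{\re}\varphi - \mathcal{R}_D^{\re}(\nu)(\nu^2-\Delta)E^{\re}\varphi$, so that $w$ solves $(\nu^2-\Delta)w=0$ in $\Omega^{\re,c}$ with $\gamma_0^{\re}w=\varphi$.

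The key step is the variational identity: testing $(\nu^2-\Delta)w=0$ against $\overline{w-E^{\re}\varphi}$ (which has zero trace, hence lies in $H^1_0(\Omega^{\re,c})$) and integrating by parts gives
\begin{align*}
\|\nabla w\|_{L^2(\Omega^{\re,c})}^2+\nu^2\|w\|_{L^2(\Omega^{\re,c})}^2 = \int_{\Omega^{\re,c}}\nabla w\cdot\overline{\nabla E^{\re}\varphi}\,d\bx+\nu^2\int_{\Omega^{\re,c}}w\,\overline{E^{\re}\varphi}\,d\bx,
\end{align*}
since $\nu>0$ makes the left-hand side coercive with no radiation term subtlety (the solution is the one selected by $\mathcal{R}_D^{\re}(\nu)$, i.e. $L^2$-bounded). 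By Cauchy--Schwarz and absorption, $\|\nabla w\|_{L^2(\Omega^{\re,c})}^2+\nu^2\|w\|_{L^2(\Omega^{\re,c})}^2\lesssim \|\nabla E^{\re}\varphi\|_{L^2}^2+\nu^2\|E^{\re}\varphi\|_{L^2}^2$. Hence $|w|_{H^1}\lesssim \|\nabla E^{\re}\varphi\|_{L^2}+\nu\|E^{\re}\varphi\|_{L^2}$ and $\nu\|w\|_{L^2}\lesssim \|\nabla E^{\re}\varphi\|_{L^2}+\nu\|E^{\re}\varphi\|_{L^2}$; dividing the second by $\nu$ gives the $L^2$-bound. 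Everything now reduces to estimating the lifting in terms of $\|\varphi\|_{H^{1/2}(\Gamma^{\re})}$ with sharp $\varepsilon$-powers, and this is where the two projections $\mathbb{P}_0^{\re}$, $\mathbb{P}_{\perp}^{\re}$ enter: by scaling $\bx\mapsto\bx/\re$ one has a lifting on the fixed domain $\Omega^{1,c}$, and translating back picks up $\|\nabla E^{\re}\varphi\|_{L^2(\Omega^{\re,c})}\sim \re^{1/2}|\varphi|_{H^{1/2}(\Gamma^1)}$ for the part orthogonal to constants, while a constant $c$ on $\Gamma^{\re}$ lifts to something whose gradient is $O(\re^{-1})$ times its $L^2$-mass over a volume $O(\re^3)$, yielding the extra $\re^{-1/2}$; the $\|E^{\re}\varphi\|_{L^2}$ terms are lower order in $\nu$ and produce the $\sqrt{\max(1,\nu\re)}$ factor after balancing against $\nu\,\re^{1/2}$. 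Combining these scalings with the energy estimate gives the four stated inequalities.

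The main obstacle I anticipate is constructing the lifting $E^{\re}$ with the right behaviour on \emph{both} projection components simultaneously and tracking the dimensionally-correct $\varepsilon$-powers — in particular, justifying that for $\mathbb{P}_0^{\re}\varphi$ (a constant on the scaled sphere) the natural harmonic-type extension costs an extra $\re^{-1/2}$ in $H^1$ relative to the $H^{1/2}$-seminorm, which vanishes on constants, so the estimate must instead be anchored to the full $H^{1/2}$-norm $\|\mathbb{P}_0^{\re}\varphi\|_{H^{1/2}(\Gamma^{\re})}$; and, for the $\mathbb{P}_{\perp}^{\re}$ part, invoking the scale-invariant equivalence $\|\psi\|_{H^{1/2}(\Gamma^{\re})}\sim|\psi|_{H^{1/2}(\Gamma^{\re})}$ on mean-zero data (already used in Lemma \ref{lem:rz0} via \cite[Proposition 4.3]{mk_as}) so that the trace lifting on $\Gamma^1$ is $\re$-uniformly bounded. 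A secondary, more routine point is verifying that the $\mathcal{N}_D^{\re}(\nu)$ defined through the solution operator agrees with $E^{\re}-\mathcal{R}_D^{\re}(\nu)(\nu^2-\Delta)E^{\re}$ as $L^2$-functions, i.e. that the relevant solution is the $L^2$-one, which is immediate since $\nu>0$ lies in the resolvent set of $\Delta_D^{\re}$.
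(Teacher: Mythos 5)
Your overall skeleton (energy identity for the coercive problem at real $\nu>0$, plus a lifting of the trace whose $H^1$/$L^2$ norms are tracked in $\re$ separately for $\mathbb{P}_0^{\re}\varphi$ and $\mathbb{P}_{\perp}^{\re}\varphi$) is the natural route, and it is in spirit what underlies the result; the paper itself, however, does none of this work — its proof is a one-line application of the lifting lemma \cite[Lemma C.1]{mk_as}, which already states the bound $\nu^2\|u\|_{L^2}^2+|u|_{H^1}^2\lesssim \max(1,\nu\re)\bigl(\re^{-1}\|\mathbb{P}_0^{\re}\xi\|_{H^{1/2}}^2+\|\mathbb{P}_{\perp}^{\re}\xi\|_{H^{1/2}}^2\bigr)$, applied with $\xi=\mathbb{P}_0^{\re}\varphi$ and $\xi=\mathbb{P}_{\perp}^{\re}\varphi$. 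So you are reconstructing the cited lemma from scratch, which is legitimate, but the reconstruction has a genuine gap precisely where the factor $\sqrt{\max(1,\nu\re)}$ must come from.

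The gap is in the lifting. Your energy estimate correctly reduces everything to $\|\nabla E^{\re}\varphi\|_{L^2}^2+\nu^2\|E^{\re}\varphi\|_{L^2}^2$, but the lifting you describe is $\nu$-independent and supported in a fixed dilate $B_{c\re}$. For the constant component, say $\mathbb{P}_0^{\re}\varphi=c$ lifted as $c\,\chi(\bx/\re)$, one has $\|\nabla E\|_{L^2}^2\sim |c|^2\re$ and $\nu^2\|E\|_{L^2}^2\sim |c|^2\nu^2\re^3$, while $\|c\|_{H^{1/2}(\Gamma^{\re})}^2\sim|c|^2\re^2$; the energy bound therefore gives a factor $1+(\nu\re)^2$ in place of $\max(1,\nu\re)$, i.e. it is weaker than the stated inequalities by $(\nu\re)^{1/2}$ whenever $\nu\re\gg 1$ (the same defect occurs for the $\mathbb{P}_{\perp}^{\re}$ part). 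The "balancing against $\nu\re^{1/2}$" you invoke is not a proof step and cannot repair this: with a lifting whose support has thickness $\sim\re$, the term $\nu^2\|E\|_{L^2}^2$ genuinely scales like $(\nu\re)^2$ times the gradient term. To obtain $\max(1,\nu\re)$ you must make the lifting frequency-adapted, i.e. concentrate it in a collar of width $\sim\min(\re,\nu^{-1})$ around $\Gamma^{\re}$ (e.g. multiply the extension by a cutoff at scale $\nu^{-1}$ or by $\mathrm{e}^{-\nu\operatorname{dist}(\bx,\Gamma^{\re})}$), so that the gradient term costs at most $\max(1,\nu\re)$ relative to the trace norm while the $\nu^2 L^2$ term is simultaneously controlled by the same factor; this $\nu$-adapted lifting, with the $\re^{-1}$ weight on the constant component and weight $1$ on the mean-zero component (via the scale-invariant equivalence of $\|\cdot\|_{H^{1/2}(\Gamma^{\re})}$ with the seminorm on mean-zero data, which you correctly flag), is exactly the content of \cite[Lemma C.1]{mk_as}. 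Either construct that lifting explicitly or cite the lemma as the paper does; as written, your argument proves the theorem only with $\sqrt{\max(1,\nu\re)}$ replaced by $1+\nu\re$.
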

\begin{proof}
	By the lifting lemma \cite[Lemma C.1.]{mk_as}, there exists $C>0$ such that for all $\xi \in H^{1/2}(\Gamma^{\re})$, $\re \in (0, 1]$, the unique solution $u \in H^1 (\Omega^{\re, c})$ to
	\begin{align*}
		- \Delta u + \nu^2 u = 0 ~\text{in}~\Omega^{\re, c}, \quad \gamma^{\re}_0 u = \xi,
	\end{align*}
i.e. $u=\mathcal{N}^{\re}_D(\nu)\xi$ satisfies the following bound:
	\begin{align*}
		\nu^2 \| u \|^2_{L^2(\Omega^{\re, c})} + |u|^2_{H^1(\Omega^{\re, c})} \leq C \max(1, \nu \re) (\re^{-1} \| \mathbb{P}_0^{\re} \xi \|^2_{H^{1/2}(\Gamma^{\re})} +\| \mathbb{P}_{\perp}^{\re} \xi \|^2_{H^{1/2}(\Gamma^{\re})}).
	\end{align*}
The desired result follows by taking $\xi=\mathbb{P}_0^{\re}\varphi$, $\mathbb{P}_{\perp}^{\re}\varphi$. 
\end{proof}
\subsubsection{Bounds on the cut-off resolvent $\chi_{a\re}\mathcal{R}^{\re}_D(s)\chi_{b\re}$}	
The simple scaling result that allows to obtain a suitable estimate on $\mathcal{R}^{\re}_D(s)$ from $\mathcal{R}^1_D(s)$ is formulated below. 
\begin{proposition}
	\label{prop:scaling}
	Given $R, r>0$, assume that the cut-off resolvent $\chi_R\mathcal{R}_D^1(s) \chi_r$ satisfies the following:
	\begin{itemize}
		\item the function $s\mapsto \chi_R\mathcal{R}_D^1(s) \chi_r$ is $\mathcal{L}\left(L^2(\Omega^{1,c}), H^{1}(\Omega^{1,c})\right)-$analytic in the region 
		\begin{align*}
			\mathscr{D}:=\{s\in \mathbb{C}: \, \Re s>-\varphi(|\Im s|)\},
		\end{align*}
		where $\varphi: \, \mathbb{R}_{\geq 0}\rightarrow \mathbb{R}_{>0}$ is independent of $R, r>0$. 
		\item there exist positive functions $M_{r,R}^j$, $j=0,1$, s.t. for all $s\in \mathscr{D}$, it holds that 
		\begin{align*}
			\|	\chi_R\mathcal{R}^1_D(s)\chi_r\|_{L^2\rightarrow H^{j}}\leq M_{r, R}^j(|s|).
		\end{align*}
	\end{itemize}
	Then for all $0<\re\leq 1$,  $
		s\mapsto 	\chi_{\re R}R^{\varepsilon}_D(s)\chi_{\re r}$ 
	satisfies  
	\begin{itemize}
		\item  the function $s\mapsto \chi_{\re R}\mathcal{R}_D^{\re}(s) \chi_{\re r}$ is $\mathcal{L}\left(L^2(\Omega^{\re,c}), H^1(\Omega^{\re,c})\right)-$analytic in  
		\begin{align*}
			\mathscr{D}^{\re}:=\{s\in \mathbb{C}: \, \Re s>-\re^{-1} \varphi(\re|\Im s|)\}.
		\end{align*}
		\item it holds that, with $j=0,1$,  
		\begin{align*}
			&\|	\chi_{\re R}\mathcal{R}^{\re}_D(s)\chi_{\re r}\|_{L^2\rightarrow H^j}\leq \re^{2-j} M_{r, R}^j(|\re s|).
		\end{align*}
	\end{itemize}
\end{proposition}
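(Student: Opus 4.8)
The plan is to set up the unitary dilation $U_\re$ between $L^2(\Omega^{1,c})$ and $L^2(\Omega^{\re,c})$ and to track how it conjugates the Dirichlet Laplacian, the resolvent parameter, and the cut-off functions. Concretely, for $v\in L^2(\Omega^{1,c})$ define $(U_\re v)(\bx):=\re^{-3/2}v(\bx/\re)$; this is unitary $L^2(\Omega^{1,c})\to L^2(\Omega^{\re,c})$ (the factor $\re^{-3/2}$ is the $3$D Jacobian normalisation), it maps $H^1_0(\Omega^{1,c})$ onto $H^1_0(\Omega^{\re,c})$, and a direct chain-rule computation gives the intertwining $\Delta_D^{\re}\,U_\re=\re^{-2}\,U_\re\,\Delta_D^{1}$ on the appropriate domains. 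Consequently, for $s\in\mathbb{C}^+$,
\begin{align*}
(s^2-\Delta_D^{\re})\,U_\re=\re^{-2}\,U_\re\,\bigl((\re s)^2-\Delta_D^{1}\bigr),
\qquad\text{hence}\qquad
\mathcal{R}_D^{\re}(s)=\re^{2}\,U_\re\,\mathcal{R}_D^{1}(\re s)\,U_\re^{-1}.
\end{align*}
This identity, established first for $s\in\mathbb{C}^+$ where both sides are honestly defined and analytic, is the whole engine of the proof.

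Next I would insert the cut-offs. Because $\chi_a(\bx)=\chi(\bx/a)$, one checks the conjugation rule $U_\re^{-1}\,\chi_{\re a}\,U_\re=\chi_a$ as a multiplication operator (it is literally the same function after rescaling the argument). Therefore
\begin{align*}
\chi_{\re R}\,\mathcal{R}_D^{\re}(s)\,\chi_{\re r}
=\re^{2}\,U_\re\,\bigl(\chi_R\,\mathcal{R}_D^{1}(\re s)\,\chi_r\bigr)\,U_\re^{-1}.
\end{align*}
Analyticity of the left-hand side in the claimed region $\mathscr{D}^{\re}=\{\Re s>-\re^{-1}\varphi(\re|\Im s|)\}$ is then immediate: $U_\re$, $U_\re^{-1}$ are fixed bounded operators, $s\mapsto \re s$ is a biholomorphism mapping $\mathscr{D}^{\re}$ onto $\mathscr{D}$ (since $\Re(\re s)=\re\Re s>-\varphi(\re|\Im s|)=-\varphi(|\Im(\re s)|)$), and $s'\mapsto\chi_R\mathcal{R}_D^{1}(s')\chi_r$ is $\mathcal{L}(L^2,H^j)$-analytic on $\mathscr{D}$ by hypothesis. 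Uniqueness of analytic continuation guarantees that the formula obtained on $\mathbb{C}^+$ persists on all of $\mathscr{D}^{\re}$.

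For the norm bounds, I would compute the operator norm of $U_\re$ as a map $L^2(\Omega^{1,c})\to L^2(\Omega^{\re,c})$ (it is $1$, unitary) and, crucially, its norm as a map $H^j(\Omega^{1,c})\to H^j(\Omega^{\re,c})$, $j=0,1$: the $L^2$-part contributes a factor $1$, while each spatial derivative falls on $v(\bx/\re)$ producing an extra $\re^{-1}$, so $\|U_\re\|_{H^1\to H^1}\lesssim \re^{-1}$ (more precisely $\|U_\re w\|_{H^j(\Omega^{\re,c})}\le \re^{-j}\|w\|_{H^j(\Omega^{1,c})}$ using the homogeneous-norm convention, or up to a harmless constant with the inhomogeneous one — since $\re\le1$, $\re^{-j}\ge1$ and the constant can be absorbed). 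Pairing the prefactor $\re^2$ from the resolvent identity with the output factor $\re^{-j}$ and the (unit) input factor gives
\begin{align*}
\|\chi_{\re R}\mathcal{R}_D^{\re}(s)\chi_{\re r}\|_{L^2\to H^j}
\le \re^{2}\cdot\re^{-j}\cdot 1\cdot\|\chi_R\mathcal{R}_D^{1}(\re s)\chi_r\|_{L^2\to H^j}
\le \re^{2-j}\,M^j_{r,R}(|\re s|),
\end{align*}
which is exactly the asserted estimate. The main obstacle is purely bookkeeping: making the Sobolev-norm scaling of $U_\re$ precise and consistent with whichever convention ($H^j$ homogeneous vs. inhomogeneous) is used elsewhere in the paper, and verifying that the dilation genuinely preserves the domain $H^1_0(\Delta;\Omega^{\re,c})$ of the Dirichlet Laplacian (so that the intertwining identity holds on domains, not merely formally). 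Everything else — analyticity, the contour-region transformation $\varphi\mapsto\re^{-1}\varphi(\re\cdot)$, and the extraction of the $\re^{2-j}$ power — follows mechanically from the conjugation formula.
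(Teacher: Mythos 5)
Your proposal is correct and is essentially the paper's own argument: the paper introduces the dilation $\mathcal{U}^{\re}$ (the inverse of your $U_\re$), derives the same conjugation identity $\chi_{\re R}\mathcal{R}^{\re}_D(s)\chi_{\re r}=\re^2(\mathcal{U}^{\re})^{-1}\chi_R\mathcal{R}^1_D(\re s)\chi_r\mathcal{U}^{\re}$ by rescaling the defining boundary-value problem, and then reads off analyticity and the bounds, using unitarity for $j=0$ and the commutation $\nabla(\mathcal{U}^{\re})^{-1}=\re^{-1}(\mathcal{U}^{\re})^{-1}\nabla$ for $j=1$, exactly as you do with the factor $\re^{-j}$.
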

\begin{proof}
This is a simple scaling argument. Let $f\in L^2(\Omega^{\re,c})$. By definition,  $v^{\re}=\mathcal{R}^{\re}_D(s)\chi_{\varepsilon r} f$ is a solution to 
	\begin{align*}
		&(s^2-\Delta)v^{\re}=\chi_{\varepsilon r}f \quad \text{ in }\Omega^{\varepsilon,c},\\
		&\gamma_0^{\re}v^{\re}=0.
	\end{align*}
	Next, we introduce a dilation 
	\begin{align*}
		\mathcal{U}^{\re}: \, L^2(\Omega^{\re,c})\rightarrow L^2(\Omega^{1,c}), \quad \left(\mathcal{U}^{\re}v\right)(\bx)=\re^{3/2}v(\re \bx), \quad \bx\in \Omega^{1,c},
	\end{align*}
	where the additional scaling factor is chosen so that $\|\mathcal{U}^{\re}v\|_{L^2(\Omega^c)}=\|v\|_{L^2(\Omega^{\re,c})}$. With the parallelogram identity, this implies that $\mathcal{U}^{\re}$ is a unitary operator. Evidently, 
	\begin{align}
		\label{eq:identities}
		\mathcal{U}^{\re}\Delta=\re^{-2}\Delta \mathcal{U}^{\re}, \text{ and }\mathcal{U}^{\re}(\chi_{\rho \re}v)=\chi_{\rho}\mathcal{U}^{\re}v, \qquad \text{ so that }\\
		\nonumber
		\mathcal{U}^{\re}(s^2-\Delta v^{\re})=\mathcal{U}^{\re}\chi_{\varepsilon r}f \text{ in }\Omega^{\re,c}\iff ((s\varepsilon)^2-\Delta)\mathcal{U}^{\re}v^{\re}=\varepsilon^2 \chi_r(\mathcal{U}^{\re}f)  \quad \text{ in }\Omega^{1,c}.
	\end{align}
	Therefore, 
	\begin{align*}
		\mathcal{U}^{\re}v^{\re}=\varepsilon^2 \mathcal{R}^1_D(\varepsilon s)\chi_r \mathcal{U}^{\re}f\implies \chi_R \mathcal{U}^{\re}v^{\re}=\varepsilon^2 \chi_R \mathcal{R}^1_D(\varepsilon s)\chi_r \mathcal{U}^{\re}f,
	\end{align*}
	which, combined with \eqref{eq:identities}, yields the following simple identity: 
	\begin{align*}
		\chi_{R\re}v^{\re}=\re^2(\mathcal{U}^{\re})^{-1}\chi_R \mathcal{R}^1_D(\varepsilon s)\chi_r \mathcal{U}^{\re}f. 
	\end{align*}
	Thus, 
	\begin{align*}
		\chi_{R\re}\mathcal{R}^{\re}_D(s)\chi_{r\re}=\re^2(\mathcal{U}^{\re})^{-1}\chi_R \mathcal{R}^1_D(\varepsilon s)\chi_r \mathcal{U}^{\re}.
	\end{align*}
The analyticity of 	$\chi_{R\re}\mathcal{R}^{\re}_D(s)\chi_{r\re}$ follows from the above identity. As for the bounds, since $\mathcal{U}^{\re}$ is unitary, and using the assumption of the proposition, we conclude that 
\begin{align*}
	\|\chi_{R\re}\mathcal{R}^{\re}_D(s)\chi_{r\re}\|_{\mathcal{L}(L^2(\Omega^{\re,c}), L^2(\Omega^{\re,c}))}\leq \varepsilon^2 M_{r,R}^0(|\re s|).
\end{align*}
The case for $j=1$ follows similarly, by remarking that $\nabla (\mathcal{U}^{\re})^{-1}=\re^{-1}(\mathcal{U}^{\re})^{-1}\nabla$
\begin{align*}
\nabla \chi_{R\re}\mathcal{R}^{\re}_D(s)\chi_{r\re}=\re(\mathcal{U}^{\re})^{-1}\nabla (\chi_R\mathcal{R}^1_D(\re s)\chi_r\mathcal{U}^{\re}),
\end{align*}
and using that $\mathcal{U}^{\re}$ is a unitary operator. 
\end{proof}
The above result allows to relate the norm of the cut-off resolvent $\mathcal{R}^{\re}_D(s)$ in the $\re$-vicinity of $\Omega^{\re}$ to the norm of the unscaled resolvent $\mathcal{R}^1_D(s)$ in the $O(1)$-vicinity $\Omega^1$. Bounds on $\mathcal{R}^1_D(s)$ are readily available in the existing literature, see the result below.
\begin{theorem}
		There exist $c_{\Omega}>0, \, c^+_{\Omega}>0$, s.t. for all $R, r>0$, the cut-off resolvent $\chi_R R^1_D(s)\chi_r\in \mathcal{L}(L^2(\Omega^c), H^{1}(\Omega^c))$, is an analytic operator-valued function on
		\begin{align*}
			\mathscr{D}=\{s\in \mathbb{C}: \, \Re s>-c_{\Omega}\}, 
		\end{align*} 
	and satisfies the following bound:
		\begin{align*}
			&\|\chi_{R}(s^2-\Delta_D^1)^{-1}\chi_r\|_{L^2(\Omega^c)\rightarrow H^{\ell}(\Omega^c)}\lesssim C_{r,R}(1+|s|)^{-1+\ell}\mathrm{e}^{c_{\Omega}^+|\Re s|_{-}}, \quad \ell=0,1.
\end{align*}
	\end{theorem}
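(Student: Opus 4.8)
This bound is classical, and the plan is to assemble it from known non-trapping resolvent estimates for the exterior Dirichlet Laplacian. The only geometric input is that a $C^\infty$ star-shaped obstacle whose curvature does not vanish to infinite order is non-trapping, cf. Proposition 13.3.1 in \cite{petkov_stoyanov}.

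The core ingredient I would invoke is the loss-free resolvent estimate in a pole-free strip. For a non-trapping obstacle the cut-off resolvent $s\mapsto\chi_R\mathcal{R}^1_D(s)\chi_r$, holomorphic and $\mathcal{L}(L^2(\Omega^c),H^1_0(\Omega^c))$-valued on $\mathbb{C}^+$, continues holomorphically to a strip $\{\Re s>-c_{\Omega}\}$ with $c_{\Omega}>0$ depending only on $\Omega$, and satisfies there, for $|s|\geq 1$, a bound with no loss of powers of $|s|$: $\|\chi_R\mathcal{R}^1_D(s)\chi_r\|_{L^2\to L^2}\lesssim C_{r,R}|s|^{-1}\mathrm{e}^{c_{\Omega}^+|\Re s|_-}$. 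For star-shaped $\Omega$, the pole-free strip and the loss-free bound on the imaginary axis follow from Morawetz's multiplier estimates \cite{morawetz,morawetz_ralston_strauss} (see also \cite{lax_phillips,melrose}); the holomorphic continuation into the strip and the high-frequency bound there rest on Vainberg's method and the resolvent estimates underlying \cite{burq}, and versions keeping track of the cut-off radii $R,r$ are available as well (cf. the discussion around \cite{galkowski_spence_wunsch}). The factor $\mathrm{e}^{c_{\Omega}^+|\Re s|_-}$ is the standard artefact of continuing across the imaginary axis: it encodes finite speed of propagation and is already visible in the free resolvent (compare Proposition \ref{prop:free_resolvent} and Lemma \ref{lem:rz}). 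On $\mathscr{D}$ it is itself bounded, but writing it out is precisely what makes the statement survive the rescaling $s\mapsto\re s$ of Proposition \ref{prop:scaling}, after which $|\Re s|_-$ may be as large as $c_{\Omega}/\re$.

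The two remaining points are routine. First, low frequencies: the exterior Dirichlet problem in $\mathbb{R}^3$ carries no zero resonance, so $\chi_R\mathcal{R}^1_D(s)\chi_r$ is holomorphic and bounded in $\mathcal{L}(L^2,L^2)$ on a neighbourhood of $s=0$; combining this with the high-frequency bound and compactness of $\{|s|\leq 1\}\cap\{\Re s\geq -c_{\Omega}\}$ (shrinking $c_{\Omega}$ if needed) upgrades the estimate to $\|\chi_R\mathcal{R}^1_D(s)\chi_r\|_{L^2\to L^2}\lesssim C_{r,R}(1+|s|)^{-1}\mathrm{e}^{c_{\Omega}^+|\Re s|_-}$ on all of $\mathscr{D}$. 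Second, the $H^1$-bound: writing $u=\chi_R\mathcal{R}^1_D(s)\chi_r f$ and using $(s^2-\Delta)u=\chi_r f+[\Delta,\chi_R]\mathcal{R}^1_D(s)\chi_r f$ together with interior elliptic (Caccioppoli) estimates on a slightly larger ball, one trades a derivative for a factor $|s|$, which turns $(1+|s|)^{-1}$ into $(1+|s|)^0$ and gives the case $\ell=1$.

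I expect the only genuinely hard ingredient to be the loss-free estimate $\lesssim C_{r,R}|s|^{-1}\mathrm{e}^{c_{\Omega}^+|\Re s|_-}$ in the pole-free strip: both the absence of resonances near the real axis and the absence of any power loss in $|s|$ are non-elementary, resting on Morawetz-type multiplier identities (for star-shaped bodies) or, in general, on Melrose--Sj\"ostrand propagation of singularities. One could instead reduce arbitrary $R,r$ to cut-offs supported in a fixed ball through the Green representation formula expressing $\mathcal{R}^1_D(s)$ via $\mathcal{R}_0(s)$ and a single-layer potential of the Neumann trace $\gamma_1\mathcal{R}^1_D(s)$, estimating the free and single-layer terms by their explicit kernels as in Proposition \ref{prop:free_resolvent}; but this reduction itself needs loss-free control of $\gamma_1\mathcal{R}^1_D(s)$, which belongs to the same non-trapping package --- so I would simply import the strip estimate as a black box.
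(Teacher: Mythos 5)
Your proposal is correct and, at bottom, follows the same strategy as the paper: the strip estimate for the cut-off non-trapping resolvent is imported as a black box, the low-frequency regime is handled by analyticity near $s=0$ plus compactness, and the large-$|s|$ regime carries the whole weight. The difference is in which black box is invoked and how the $H^1$ case is obtained. The paper assembles everything from \cite{dyatlov_zworski}: meromorphy of the cut-off resolvent (Theorem 4.4), analyticity near zero (Theorem 4.19), and the black-box non-trapping bound of Theorem 4.43 for $|s|\gg 1$, whose hypothesis (smoothness of the truncated wave propagator $t\mapsto \chi_R\frac{\sin(\sqrt{-\Delta_D}\,t)}{\sqrt{-\Delta_D}}\chi_r$ for large $t$) is verified via Melrose's theorem \cite{melrose} for geometrically non-trapping obstacles together with \cite[Proposition 13.3.1]{petkov_stoyanov}; the $\ell=1$ bound is then read off from estimate (4.6.44) there. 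You instead appeal to the Morawetz/Lax--Phillips/Vainberg package for star-shaped bodies and prove the $\ell=1$ upgrade yourself by a Caccioppoli-type argument trading one derivative for a factor $|s|$; that argument is sound (taking real parts in the energy identity handles $\Re(s^2)<0$, at the harmless cost of enlarging the cut-off, which is free since $R,r$ are arbitrary), and it is a reasonable substitute for citing (4.6.44). Your route buys a slightly more self-contained treatment of $\ell=1$ and makes the role of the factor $\mathrm{e}^{c_{\Omega}^+|\Re s|_{-}}$ under the later rescaling explicit; the paper's route buys a single uniform reference covering analyticity, the strip, and both norms, at the price of the extra verification that star-shaped obstacles are black-box non-trapping in the sense of \cite{dyatlov_zworski}.
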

	\begin{remark}
	In \cite{ralston} it was shown that $s\mapsto \chi_R R^1_D(s)\chi_r$ is analytic for $\Re s>-2(\operatorname{diam}\Omega)^{-1}$, see the related discussion in \cite{hintz_zworski}.
	\end{remark}
\begin{proof}
First of all, we remark that the analyticity of the resolvent in the required region follows by combining the following facts:
\begin{itemize}
	\item  the cut-off resolvent is a meromorphic $\mathcal{L}(L^2(\Omega^c), D(\Delta_D))$-function on $\mathbb{C}$, see \cite[Theorem 4.4]{dyatlov_zworski}; 
	\item analyticity of the cut-off resolvent in the vicinity of $0$ \cite[Theorem 4.19]{dyatlov_zworski}; 
	\item the statement of  \cite[Theorem 4.43]{dyatlov_zworski} for $|s|\gg 1$.
\end{itemize}
To apply Theorem 4.43 of \cite{dyatlov_zworski}, we need, however, to argue that star-shaped obstacles are \textit{black-box non-trapping} in the sense of Definition 4.42 \cite{dyatlov_zworski}. More precisely, we need to verify that   $t\mapsto \chi_R\frac{\sin (\sqrt{-\Delta_D}t)}{\sqrt{-\Delta_D}}\chi_r\in C^{\infty}((T_{r,R}; +\infty); \mathcal{L}(L^2(\Omega^{c}); H^1_0(\Omega^c)\cap H^2(\Omega^c)))$ for star-shaped obstacles. As pointed out in Remark 4 after Theorem 4.43, for the validity of Theorem 4.43 it is sufficient that, for some $k\geq 0$, $$t\mapsto \chi_R\frac{\sin (\sqrt{-\Delta_D}t)}{\sqrt{-\Delta_D}}\chi_r\in C^{k}((T_{r,R}; +\infty); \mathcal{L}(L^2(\Omega^{c}); H^1_0(\Omega^c)\cap H^2(\Omega^c))).$$

This latter estimate, for arbitrary $k\geq 0$, was proven in \cite[Theorem 1.4]{melrose} for geometrically non-trapping obstacles, and obstacles satisfying Assumption \ref{assumption:star_shaped} are indeed geometrically non-trapping, see \cite[Proposition 13.3.1]{petkov_stoyanov}.

The respective bound for $\ell=0$ on the resolvent follows from  \cite[Teorem 4.43]{dyatlov_zworski} for $|s|\gg 1$, and for $|s|<\operatorname{const}$ it is immediate from the analyticity of $s\mapsto\chi_R R^1_D(s)\chi_r\in \mathcal{L}(L^2(\Omega^c), H^{\ell}(\Omega^c))$ in the $\mathbb{C}^{-}$-vicinity of the imaginary axis.

To obtain a bound for $\ell=1$, we make use of Remark 1 after Theorem 4.43 of \cite{dyatlov_zworski}; the corresponding estimate is stated as estimate (4.6.44) in \cite{dyatlov_zworski}. \footnote{Remark however a small typo: in (4.6.44) $\alpha\rightarrow \alpha/2$ everywhere but in $\langle \lambda\rangle^{\alpha/2}$.}

\end{proof}
Applying the above bound in Proposition \ref{prop:scaling} yields the desired estimate. 
\begin{corollary}
	\label{cor:analyticity}
There exist $c_{\Omega}, \, c^+_{\Omega}>0$, s.t. for $\ell=0,1$, $R,r>0$, $0<\re\leq 1$, the cut-off resolvent $s\mapsto \chi_{\re R}\mathcal{R}_D^{\re}(s) \chi_{\re r}$ is $\mathcal{L}\left(L^2(\Omega^{\re,c}), H^{\ell}(\Omega^{\re,c})\right)-$analytic in the region $
			\mathscr{D}^{\re}=\{s\in \mathbb{C}: \, \Re s>-\re^{-1} c_{\Omega}\},
$
and satisfies the following bound:
		\begin{align*}
			&\|	\chi_{\re R}\mathcal{R}^{\re}_D(s)\chi_{\re r}\|_{L^2\rightarrow H^{\ell}}\lesssim \re^{2-\ell} (1+|\re s|)^{-(1-\ell)}\mathrm{e}^{\re c_{\Omega}^+| \Re s|_{-}}.
		\end{align*}
\end{corollary}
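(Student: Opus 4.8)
The plan is to invoke the scaling result of Proposition~\ref{prop:scaling} with the unscaled cut-off resolvent bounds just established for $\chi_R\mathcal{R}_D^1(s)\chi_r$. First I would record that the theorem preceding this corollary furnishes, for every $R,r>0$, analyticity of $s\mapsto\chi_R\mathcal{R}_D^1(s)\chi_r$ on the $\re$-independent half-plane $\mathscr{D}=\{\Re s>-c_\Omega\}$ together with the bound
\begin{align*}
	\|\chi_R(s^2-\Delta_D^1)^{-1}\chi_r\|_{L^2(\Omega^c)\to H^{\ell}(\Omega^c)}\lesssim C_{r,R}(1+|s|)^{-1+\ell}\mathrm{e}^{c_\Omega^+|\Re s|_-},\quad \ell=0,1.
\end{align*}
This is exactly the hypothesis of Proposition~\ref{prop:scaling} with the choices $\varphi(\cdot)\equiv c_\Omega$ (a positive constant function, trivially independent of $R,r$) and $M_{r,R}^{\ell}(|s|):=C'\,C_{r,R}(1+|s|)^{-1+\ell}\mathrm{e}^{c_\Omega^+|\Re s|_-}$, absorbing the implicit constant into $C'$.

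Next I would simply read off the two conclusions of Proposition~\ref{prop:scaling}. Its first conclusion gives analyticity of $s\mapsto\chi_{\re R}\mathcal{R}_D^{\re}(s)\chi_{\re r}$ as an $\mathcal{L}(L^2(\Omega^{\re,c}),H^{\ell}(\Omega^{\re,c}))$-valued function on $\mathscr{D}^{\re}=\{\Re s>-\re^{-1}\varphi(\re|\Im s|)\}=\{\Re s>-\re^{-1}c_\Omega\}$, since $\varphi$ is constant; this is precisely the claimed region. Its second conclusion gives, for $\ell=0,1$,
\begin{align*}
	\|\chi_{\re R}\mathcal{R}_D^{\re}(s)\chi_{\re r}\|_{L^2\to H^{\ell}}\leq \re^{2-\ell}M_{r,R}^{\ell}(|\re s|)\lesssim \re^{2-\ell}C_{r,R}(1+|\re s|)^{-(1-\ell)}\mathrm{e}^{c_\Omega^+|\Re(\re s)|_-},
\end{align*}
and since $\re>0$ one has $|\Re(\re s)|_-=\re|\Re s|_-$, so the exponential factor becomes $\mathrm{e}^{\re c_\Omega^+|\Re s|_-}$, matching the stated estimate (the factor $C_{r,R}$, depending only on the fixed scales $r,R$ and not on $\re$, is folded into the $\lesssim$).

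The only mild subtlety — and the one place deserving a sentence of care — is that Proposition~\ref{prop:scaling} is stated for the map $s\mapsto\chi_{\re R}\mathcal{R}_D^{\re}(s)\chi_{\re r}$ with $R,r$ fixed \emph{before} scaling, whereas here we want to allow $a\re,b\re$ with $a,b>0$ arbitrary; but this is immediate, since given any target dilated radii $a\re,b\re$ we just apply the proposition with $R=a$, $r=b$. There is no genuine obstacle: the entire corollary is a bookkeeping consequence of the scaling lemma applied to the literature bound, the one thing to double-check being the elementary identity $|\Re(\re s)|_-=\re\,|\Re s|_-$ for $\re>0$ used to rewrite the exponent. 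I would close by remarking that $c_\Omega$ and $c_\Omega^+$ are inherited verbatim from the preceding theorem and hence depend only on the geometry of $\Omega$.
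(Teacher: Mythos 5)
Your proposal is correct and is exactly the paper's argument: the paper proves the corollary by applying the scaling result of Proposition~\ref{prop:scaling} to the literature bound on $\chi_R\mathcal{R}_D^1(s)\chi_r$, with the constant function $\varphi\equiv c_\Omega$ and the stated $M_{r,R}^{\ell}$, which is what you do. Your extra care with the identity $|\Re(\re s)|_-=\re|\Re s|_-$ and with the roles of the radii is sound and only makes explicit what the paper leaves implicit.
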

\subsubsection{Proof of Proposition \ref{prop:bounds}}
\label{sec:proof}
We use Proposition \ref{prop:De} with $m=2$: 
\begin{align*}
	&\mathcal{D}^{\re}_D(s)=-\left(\chi_{2\re}+Q_{\re}(s,z)\right)\mathcal{N}^{\re}_D(z)\gamma_0^{\re}\mathcal{R}_0(s), \\ 
	&Q_{\re}(s,z)=-(\mathcal{R}_0^{\varepsilon}(s)[\Delta,\chi_{4\varepsilon}]+\chi_{4\re})\chi_{8\varepsilon}\mathcal{R}^{\re}_D(s)\chi_{4\re}\left([\Delta, \chi_{2\varepsilon}]-(s^2-z^2)\chi_{2\varepsilon}\right).
\end{align*}
Since $[\Delta, \chi]=2\nabla \chi\cdot \nabla +\Delta \chi\cdot $, careful examination of the above expression and 
analyticity of the resolvents  $\mathcal{R}_0(s)$, $\mathcal{R}_0^{\re}(s)$, $\mathcal{R}_D^{\re}(s)$ on $\mathbb{C}^+$ as functions from $L^2$ into $H^1$ yields the conclusion about the analyticity of $s\mapsto \mathcal{D}^{\re}_D(s)\in \mathcal{L}(L^2(\Omega^{\re,c}), L^2(\Omega^{\re,c}))$. 

Next, to argue about the analyticity of $s\mapsto\chi_{\rho} \mathcal{D}^{\re}_D(s)\chi_{\rho}$, we rewrite 
\begin{align*}
	\chi_{\rho} \mathcal{D}^{\re}_D(s)\chi_{\rho}=	-\left(\chi_{\rho}\chi_{m\re}+
	\chi_{\rho} Q_{\re}(s,z)\right)\mathcal{N}^{\re}_D(z)\gamma_0^{\re}\left(\chi_{2}\mathcal{R}_0(s)\chi_{\rho}\right),
\end{align*}
where we use that $\chi_2=1$ on $\partial\Omega^{\re}$. We recall that the free resolvent $s\mapsto \mathcal{R}_0(s)\in \mathcal{L}(L^2_{comp}(\mathbb{R}^3), H^1_{loc}(\mathbb{R}^3))$ is entire, see \cite[Theorem 3.1 and item 4 of the proof]{dyatlov_zworski}. The statement about the analyticity of $\chi_{\rho}\mathcal{D}^{\re}_D\chi_{\rho}$ then follows by combining this result together with Corollary \ref{cor:analyticity}.

Let us now obtain the bounds stated in Proposition \ref{prop:bounds}. We set $z=\re^{-1}$ (this choice comes from taking $z>0$ arbitrary, obtaining the desired bound, and then minimizing it in $z$). Let us define
\begin{align*}
	A^{\re}_0(s)&:=\mathcal{N}^{\re}_D(\re^{-1})\gamma_0^{\re}\mathcal{R}_0(s)\mathbb{1}_{\mathcal{O}},\qquad
	A^{\re}_1(s):=\left([\Delta, \chi_{2\varepsilon}]-(s^2-z^2)\chi_{2\varepsilon}\right)A^{\re}_0,\\
	B^{\re}_0(s)&:=\chi_{8\re}\mathcal{R}^{\re}_D(s)\chi_{4\re},\qquad
	B^{\re}_1(s):=[\Delta,\chi_{4\re}]B^{\re}_0,
\end{align*}
so that $
	\mathcal{D}^{\re}_D=-(\chi_{2\re}A^{\re}_0-\mathcal{R}_0^{\re}\chi_{8\re}B^{\re}_1A^{\re}_1-\chi_{4\re}B^{\re}_0A^{\re}_1),
$
where we took into account that $\chi_{8\re}=1$ on $\operatorname{supp}\chi_{4\re}$.

Combining Theorem \ref{prop:nz}, Proposition \ref{prop:nz_aux}, and using assumption that $0<\re<\re_0$ with $\re_0$ sufficiently small, yields:
\begin{align*}
	&\|A_0^{\re}\|_{L^2\rightarrow L^2}\lesssim \re^{1/2}\alpha_{\re}, \quad 	\|\nabla A_0^{\re}\|_{L^2\rightarrow L^2}\lesssim \re^{-1/2} \alpha_{\re}, \\
	&\alpha_{\re}=\|\mathbb{P}_0\gamma_0^{\re}\mathcal{R}_0(s)\|_{L^{2}(\mathcal{O}_0)\rightarrow L^2(\Gamma^{\re})}+\re^{1/2}\|\mathbb{P}_{\perp}\gamma_0^{\re}\mathcal{R}_0(s)\|_{L^{2}(\mathcal{O}_0)\rightarrow H^{1/2}(\Gamma^{\re})},\\
	&\alpha_{\re}\lesssim \re c_{\mathcal{O}_0}\mathrm{e}^{|\Re s|_{-}(R_0+\re)}(1+\re|s|), \quad  c_{\mathcal{O}_0}=R_0^{3/2}\max(1, r_0^{-2}).
\end{align*}
Next, 
\begin{align*}
	\|A_1^{\re}\|&\lesssim \re^{-1}\|\nabla A_0^{\re}\|+(|s\re|^2+1)\re^{-2}\|A_0^{\re}\|\lesssim  \re^{-3/2}(1+|s\re|^2)\alpha_{\re}.
\end{align*}
Using Corollary \ref{cor:analyticity}, 
\begin{align*}
\|B_0^{\re}(s)\|_{L^2\rightarrow H^{\ell}}&\lesssim \re^{2-\ell}(1+|s\re|)^{\ell-1}\mathrm{e}^{c_{\Omega}^+|\Re s|_{-}\re},\\
\|B_1^{\re}\|_{L^2\rightarrow L^2}&\lesssim \re^{-2}\|\chi_{8\re}\mathcal{R}^{\re}_D(s)\chi_{4\re}\|_{L^2\rightarrow L^2}+\re^{-1}\|\chi_{8\re}\mathcal{R}^{\re}_D(s)\chi_{4\re}\|_{L^2\rightarrow H^1}\lesssim \mathrm{e}^{c_{\Omega}^+|\Re s|_{-}\re}.
\end{align*}

Now that we have the preliminary bounds, we can prove the final bound on $\mathcal{D}^{\re}_D$. We start by fixing $R>r>1$ and studying the bound in the ball that excludes $\Omega^{\re}$:
\begin{align*}
	\|\chi_{r,R}\mathcal{D}^{\re}_D(s)\mathbb{1}_{\mathcal{O}}\|_{L^2\rightarrow L^2}\lesssim \|A_0^{\re}(s)\|+\|\chi_{r,R}\mathcal{R}_0^{\re}(s)\chi_{8\re}\|\|B_1^{\re}(s)\|\|A_1^{\re}(s)\|.
\end{align*}
Remark that $\chi_{r,R}\chi_{4\re}=0$ for $8\re<r$, and hence the operator $\chi_{4\re}B_0^{\re}A_1^{\re}$ does not occur in the above bound.

We have all the necessary bounds, and just need to employ Proposition \ref{prop:free_resolvent}. Altogether we obtain the sought far-field bound: 
\begin{align*}
	\|\chi_{r,R}\mathcal{D}_D^{\re}(s)\mathbb{1}_{\mathcal{O}}\|_{L^2\rightarrow L^2}&\lesssim 
\re^{1/2}\alpha_{\re}+R^{3/2} \frac{\mathrm{e}^{|\Re s|_{-}(R+(16+c_{\Omega}^+)\re)}}{r-16\varepsilon}\alpha_{\re}(1+|s\re|)^2\\
&\lesssim  \re R^{3/2}r^{-1}c_{\mathcal{O}_0}\mathrm{e}^{|\Re s|_{-}(R_0+R+17\re+c_{\Omega}^+\re)}(1+|s\re|)^3,
\end{align*}
where we used $R>r>1$. 
We set $\widetilde{c}_{\Omega}^+:=c_{\Omega}^++17$ and obtain the bound stated in the proposition.
\subsection{Proof of Theorem \ref{theorem:answer1}}
We will prove the following bound, which implies the bound stated in the theorem: 
	\begin{align}
		\label{eq:bdd}
	\|u^{\re}_{sc}(t)\|_{L^2(K_{\operatorname{ff}}^{\delta})}\leq C_{\operatorname{ff}}\re\min(1, \re^{k_{reg}-2}\mathrm{e}^{-\frac{\gamma_{\Omega}}{\re}(t-(R_{\operatorname{ff}}+R_0+\widetilde{c}_{\Omega}\re))})\sqrt{E_0^{k_{reg}}},
\end{align}
for some $\widetilde{c}_{\Omega}>0$. 

We fix $R>r>1$, and recall
 \eqref{eq:representation_formula_scat2} and \eqref{eq:ded}:
 \begin{align}
 	\label{eq:ueps2}
 	\chi_{r,R}u^{\varepsilon}_{sc}(t)=\frac{1}{2\pi i}\int_{i\mathbb{R}+\mu}\frac{e^{st}}{(\mu_*-s)^p}\chi_{r,R}\mathcal{D}^{\re}_D(s)\mathbb{1}_{\mathcal{O}}(su_0^p+v_0^p)ds, \quad 0<\mu<\mu_*.
 \end{align}
By Proposition \ref{prop:De}, we  can deform the contour to negative $\mu$. We choose it as a straight line $\Re s=-\gamma_{\Omega}/\re$, where $\gamma_{\Omega}>0$ is a positive constant s.t. $\gamma_{\Omega}<c_{\Omega}$ with $c_{\Omega}$ being like in Proposition \ref{prop:De}. This allows to rewrite 
\begin{align*}
\|\chi_{r,R}u^{\varepsilon}_{sc}(t)\|_{L^2}&\lesssim \int_{\mathbb{R}}\mathrm{e}^{-\gamma_{\Omega}t/\re}|\mu_*+\gamma_{\Omega}/\re+i\omega|^{-p}\|\chi_{r,R}\mathcal{D}^{\re}_D(\gamma_{\Omega}/\re+i\omega)\mathbb{1}_{\mathcal{O}}\|_{L^2\rightarrow L^2}\\
&\times (|\gamma_{\Omega}\re^{-1}+i\omega|\|u_0^p\|+\|u_1^p\|)d\omega,
\end{align*} 
and we use the bound of Proposition \ref{prop:De}. This leads to the following estimate: 
\begin{align*}
\|\chi_{r,R}u^{\re}_{sc}(t)\|_{L^2}\lesssim \re \mathrm{e}^{-\frac{\gamma_{\Omega}}{\re}(t-R-R_0-\tilde{c}_{\Omega}\re)}C_{r,R}c_{\mathcal{O}_0}S^{\re}_p(\|u_0^p\|+\|u_1^p\|),	
\end{align*}
where the integral below converges if $p>5$:
\begin{align*}
	S^{\re}_p&=\int_{\mathbb{R}}|\mu_*+\gamma_{\Omega}\re^{-1}+i\omega|^{-p}\max(|\gamma_{\Omega}\re^{-1}+i\omega|,1)|1+\gamma_{\Omega}+i\omega \re|^3 d\omega\lesssim C_{p,\gamma_{\Omega}}\re^{p-2},
\end{align*}
 The assumption of the theorem and Remark \ref{rem:bmap} enable the choice $p=k_{reg}$. We thus have obtained one part of \eqref{eq:bdd}.

The above bound is clearly convenient to use when $t>R+R_0+\widetilde{c}_{\Omega}\re$, however, otherwise, for a fixed $t$ that does not satisfy this condition, it is non-optimal when $R\rightarrow +\infty$. Thus, in this regime we use the contour with $\Re s=0$:
\begin{align*}
\|\chi_{r,R}&u^{\re}_{sc}(t)\|_{L^2}\lesssim \re C_{r,R}c_{\mathcal{O}_0}\max(1, r_0^{-1})(\|u_0^p\|+\|u_1^p\|)\\
& \times\int_{\mathbb{R}}|\mu_*+i\omega|^{-p}(1+|\omega| \re)^3\max(1, |\omega|)d\omega\lesssim C_p\re C_{r,R}c_{\mathcal{O}_0}(\|u_0^p\|+\|u_1^p\|).
\end{align*}

\section{Long-time convergence of the asymptotic model of \cite{sini_wang_yao}}
\label{sec:asymptotic}
In \cite{sini_wang_yao}, it was suggested to approximate the scattered field by the following expression, considered also in \cite{martin}: 
\begin{align}
	\label{eq:as1}
	u^{\re}_{app}(t,\bx):=-c^{\re}\frac{u^{inc}(t-\|\bx\|, \mathbf{0})}{4\pi\|\bx\|}, 
\end{align}
where $c^{\re}$ is the capacitance of $\Gamma^{\re}$, defined as follows. We start by introducing
\begin{align*}
	\sigma^{\re}\in L^2(\Gamma^{\re}) \text{ as a unique $H^{-1/2}(\Gamma^{\re})$-solution to }\int_{\Gamma^{\re}}\frac{\sigma^{\re}(\by)}{4\pi\|\bx-\by\|}d\Gamma_{\by}=1, \quad \bx\in \Gamma^{\re}.
\end{align*}
The capacitance is then   $c^{\re}:=\int_{\Gamma^{\re}}\sigma^{\re}(\by)d\Gamma_{\by}=\re c^1$.

 As shown in the statement below, it indeed provides an approximation of the scattered field \textit{in the far-field}, which does not deteriorate in accuracy with a long time. 
\begin{theorem}
	\label{theorem:answer2}
 There exists $\re_0>0$, s.t. for all $0<\re<\re_0$, the error between the exact scattered field $u^{\re}_{sc}$ and its  approximation $u^{\re}_{app}$, defined in \eqref{eq:as1}, behaves as follows. For $k_{reg}\geq 7$,   $t=R_0+R_{\operatorname{ff}}+\tau$, 
\begin{align*}
	\|u^{\re}_{sc}(t)-u^{\re}_{app}(t)\|_{L^{2}(K_{\operatorname{ff}})}\lesssim 		 \left\{
	\begin{array}{ll}
		\widetilde{C}_{\operatorname{ff}}\re^2   \sqrt{E_0^{k_{reg}}},& \tau\leq 0,	 		\\	 C_{\operatorname{ff}}\re^{k_{reg}-1}\mathrm{e}^{-\frac{\gamma_{\Omega}\tau}{\re}}\sqrt{E_0^{k_{reg}}}, &\tau >0,
	\end{array}
	\right.
\end{align*}
where  $\gamma_{\Omega}$, $C_{\operatorname{ff}}$ are like in Theorem \ref{theorem:answer1}, 
and the constant $\widetilde{C}_{\operatorname{ff}}$ depends on $R$, $r$, $R_0$, $r_0$ and the obstacle $\Omega$. 
\end{theorem}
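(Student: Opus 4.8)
The strategy is to reduce the claim to a Laplace-domain estimate for the error that parallels Proposition~\ref{prop:bounds}, and then to recycle, almost verbatim, the contour-deformation argument of Section~\ref{sec:proof}. Fix $R>r>1$ with $r\le r_{\operatorname{ff}}$ and $R\ge R_{\operatorname{ff}}$, so that $\chi_{r,R}=1$ on $K_{\operatorname{ff}}$ and it suffices to bound $\chi_{r,R}(u^{\re}_{sc}(t)-u^{\re}_{app}(t))$ in $L^2(\Omega^{\re,c})$. By Lemma~\ref{lem:resolvent_B}, $\widehat{u^{inc}}(s)=\mathcal R_0(s)(su_0+v_0)$ (with the regularisation $(\mu_*-s)^{-p}$, $p=k_{reg}$, reinstated at the end); by elliptic regularity and $H^2(\mathbb R^3)\hookrightarrow C^0$ the value $\widehat{u^{inc}}(s,\vec 0)$ is well defined, and by strong Huygens $t\mapsto u^{inc}(t,\vec 0)$ is supported in $[r_0,R_0]$, whence $|\widehat{u^{inc}}(s,\vec 0)|\lesssim(1+|s|)e^{|\Re s|_-(R_0+\re)}\sqrt{E_0^{k_{reg}}}$. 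With $\Phi_s(\bx,\by):=\frac{e^{-s\|\bx-\by\|}}{4\pi\|\bx-\by\|}$ and $\Phi^\infty_s(\bx):=\Phi_s(\bx,\vec 0)$, the Laplace transform of \eqref{eq:as1} is $\widehat{u^{\re}_{app}}(s,\bx)=-c^{\re}\,\Phi^\infty_s(\bx)\,\widehat{u^{inc}}(s,\vec 0)$. It therefore suffices to prove, for $s$ in the region $\mathscr D^{\re}=\{\Re s>-c_\Omega\re^{-1}\}$ of Proposition~\ref{prop:bounds}, a bound of the shape $\|\chi_{r,R}(\widehat{u^{\re}_{sc}}(s)-\widehat{u^{\re}_{app}}(s))\|_{L^2(\Omega^{\re,c})}\lesssim\re^{2}\,C_{r,R}\,c_{\mathcal O_0}\,(1+|s|)(1+|s\re|)^{k}\,e^{|\Re s|_-(R_0+R+\widetilde c_\Omega\re)}\sqrt{E_0^{k_{reg}}}$ for some fixed integer $k$: the extra factors $\re$ and $(1+|s|)$ over the bound of Proposition~\ref{prop:bounds} are exactly what upgrades the small-time rate from $\re$ to $\re^{2}$ and, together with $p=k_{reg}$, forces $k_{reg}\ge 7$ for convergence of the ensuing $s$-integral.

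\emph{Monopole of the exact field.} Reuse the decomposition of Section~\ref{sec:proof} with $z=\re^{-1}$, $m=2$: since $\chi_{r,R}$ annihilates $\chi_{2\re}$ and $\chi_{4\re}$ once $\re$ is small, $\chi_{r,R}\widehat{u^{\re}_{sc}}(s)=\chi_{r,R}\mathcal R_0^{\re}(s)\psi^{\re}_s$ with $\psi^{\re}_s:=\chi_{8\re}B^{\re}_1(s)A^{\re}_1(s)(su_0^{p}+v_0^{p})$ supported in $B_{8\re}$, and from the bounds assembled for Proposition~\ref{prop:bounds}, $\|\psi^{\re}_s\|_{L^2}\lesssim\re^{-1/2}c_{\mathcal O_0}(1+|s\re|)^{3}e^{|\Re s|_-(R_0+\widetilde c_\Omega\re)}\|su_0^{p}+v_0^{p}\|$, hence $\|\psi^{\re}_s\|_{L^1}\lesssim\re^{3/2}\|\psi^{\re}_s\|_{L^2}\lesssim\re\,(\cdots)$. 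In $\mathcal R_0^{\re}(s)\psi^{\re}_s(\bx)=\int_{B_{8\re}}\Phi_s(\bx,\by)\psi^{\re}_s(\by)\,d\by$ split $\Phi_s(\bx,\by)=\Phi^\infty_s(\bx)+(\Phi_s(\bx,\by)-\Phi_s(\bx,\vec 0))$; the mean value theorem gives $|\Phi_s(\bx,\by)-\Phi_s(\bx,\vec 0)|\lesssim\re(1+|s|)\|\bx\|^{-1}e^{|\Re s|_-(\|\bx\|+8\re)}$ for $\by\in B_{8\re}$, $\bx\in K_{\operatorname{ff}}$, so the associated piece of $\chi_{r,R}\widehat{u^{\re}_{sc}}$ is $O(\re(1+|s|)\|\psi^{\re}_s\|_{L^1})=O(\re^{2})$ in $L^2(K_{\operatorname{ff}})$ with the required weights. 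Thus $\chi_{r,R}\widehat{u^{\re}_{sc}}(s)=\chi_{r,R}\Phi^\infty_s\,M^{\re}(s)+(O(\re^{2})\text{ remainder})$, where $M^{\re}(s):=\int_{B_{8\re}}\psi^{\re}_s$ is analytic on $\mathscr D^{\re}$ and $|M^{\re}(s)|\lesssim\|\psi^{\re}_s\|_{L^1}$.

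\emph{Identifying the monopole moment.} The crux is $M^{\re}(s)=-c^{\re}\,\widehat{u^{inc}}(s,\vec 0)+\mathcal E^{\re}(s)$ on $\mathscr D^{\re}$, with $|\mathcal E^{\re}(s)|\lesssim\re^{2}(1+|s|)(1+|s\re|)^{k}e^{|\Re s|_-(R_0+\re)}\sqrt{E_0^{k_{reg}}}$. I would prove it on $\mathbb C^+$ and continue analytically. On $\mathbb C^+$, $\widehat{u^{\re}_{sc}}$ has a single-layer representation with density $q^{\re}$ solving $\mathcal S^{\re}_s q^{\re}=-\gamma^{\re}_0\widehat{u^{inc}}(s)$ on $\Gamma^{\re}$, where $\mathcal S^{\re}_s$ is the Helmholtz single-layer operator with kernel $\Phi_s$ (for small $\re$ the interior Dirichlet thresholds, sitting at $\pm i\sqrt{\lambda_j}\,\re^{-1}$, stay off $\mathbb C^+$), so that $M^{\re}(s)=\int_{\Gamma^{\re}}q^{\re}$ is the total charge. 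Split $-\gamma^{\re}_0\widehat{u^{inc}}=-\widehat{u^{inc}}(s,\vec 0)\cdot1-(\gamma^{\re}_0\widehat{u^{inc}}-\widehat{u^{inc}}(s,\vec 0))$. The constant part contributes $-\widehat{u^{inc}}(s,\vec 0)\,c^{\re}(s)$, $c^{\re}(s):=\int_{\Gamma^{\re}}(\mathcal S^{\re}_s)^{-1}1$ being the Helmholtz capacitance; the dilation $\bx\mapsto\re\bx$ gives $\mathcal S^{\re}_s\cong\re\,\mathcal S^{1}_{\re s}$, hence $c^{\re}(s)=\re\,c^{1}(\re s)$, and since $s'\mapsto c^{1}(s')$ is analytic with polynomial-in-$|s'|$ bounds on the \emph{fixed} half-plane $\{\Re s'>-c_\Omega\}$ (this is where the unit-obstacle cut-off resolvent from Corollary~\ref{cor:analyticity} enters), Taylor with integral remainder yields $c^{1}(\re s)=c^{1}(0)+\re s\int_0^1(c^{1})'(t\re s)\,dt=c^{1}+O(\re|s|(1+|\re s|)^{k})$; recalling $c^{1}(0)=c^{1}$ and $c^{\re}=\re c^{1}$ gives the leading term $-c^{\re}\widehat{u^{inc}}(s,\vec 0)$ and the $O(\re^{2}|s|(1+|\re s|)^{k})|\widehat{u^{inc}}(s,\vec 0)|$ part of $\mathcal E^{\re}$. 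The non-constant part $\gamma^{\re}_0\widehat{u^{inc}}-\widehat{u^{inc}}(s,\vec 0)$ is $O(\re)$ in $L^\infty(\Gamma^{\re})$ (Lemma~\ref{lem:rz} bounds $\|\nabla\widehat{u^{inc}}\|_{L^\infty(B_\re)}$); decomposing it via $\mathbb P^{\re}_0,\mathbb P^{\re}_\perp$ and using Proposition~\ref{prop:nz_aux} and Theorem~\ref{prop:nz} — its $\mathbb P^{\re}_0$-component being a constant of size $O(\re)$, hence producing an $O(\re)\cdot c^{\re}=O(\re^{2})$ term, and its $\mathbb P^{\re}_\perp$-component carrying the extra $\re^{1/2}$ from $\mathcal N^{\re}_D(\re^{-1})$ — shows it adds only $O(\re^{2})$-type contributions. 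One finally extends the identity from $\mathbb C^+$ to $\mathscr D^{\re}$ by the analytic-continuation bootstrap already used for $\mathcal D^{\re}_D$ in Section~\ref{sec:proof}. Combined with the previous step, $\chi_{r,R}(\widehat{u^{\re}_{sc}}(s)-\widehat{u^{\re}_{app}}(s))=\chi_{r,R}\Phi^\infty_s\big(M^{\re}(s)+c^{\re}\widehat{u^{inc}}(s,\vec 0)\big)+(O(\re^{2})\text{ remainder})$ obeys the announced Laplace-domain bound.

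\emph{Conclusion, and the main obstacle.} With that bound in hand I apply to $\chi_{r,R}(u^{\re}_{sc}(t)-u^{\re}_{app}(t))=\frac1{2\pi i}\int_{i\mathbb R+\mu}\frac{e^{st}}{(\mu_*-s)^{p}}\chi_{r,R}(\widehat{u^{\re}_{sc}}(s)-\widehat{u^{\re}_{app}}(s))\,ds$ exactly the two contour choices of Section~\ref{sec:proof}: for $\tau\le 0$ the vertical contour $\Re s=0$, which yields $\widetilde C_{\operatorname{ff}}\re^{2}\sqrt{E_0^{k_{reg}}}$ (the $\omega$-integral converging since $p=k_{reg}\ge 7$ beats the polynomial degree of the new integrand); for $\tau>0$ the contour $\Re s=-\gamma_\Omega\re^{-1}$ with $\gamma_\Omega<c_\Omega$, where $\re^{2}|s|=\re\sqrt{\gamma_\Omega^{2}+|\re\Im s|^{2}}$, so the extra $|s|$ only inflates the polynomial weight, the $\re$-power stays the same as in Theorem~\ref{theorem:answer1}, and the exponential weights assemble to $e^{-\gamma_\Omega\tau/\re}$, giving $C_{\operatorname{ff}}\re^{k_{reg}-1}e^{-\gamma_\Omega\tau/\re}\sqrt{E_0^{k_{reg}}}$. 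The hard part is the \emph{Identifying the monopole moment} step: pinning the leading monopole of the scattered field to the static capacitance $c^{\re}$ times $\widehat{u^{inc}}(s,\vec 0)$, with all $\re$-dependence explicit and valid on the \emph{enlarged} region $\mathscr D^{\re}$ rather than merely on $\mathbb C^{+}$. Two points make this delicate. First, Sobolev norms on the shrinking surface $\Gamma^{\re}$ scale awkwardly — the obstruction flagged near Remark~\ref{rem:bmap} — so one cannot handle $\mathcal N^{\re}_D(s)$ or $\mathcal S^{\re}_s$ directly for complex $s$, but must keep routing through the $z=\re^{-1}$ Krein formula of Proposition~\ref{prop:De} and the $\mathbb P^{\re}_0/\mathbb P^{\re}_\perp$ decomposition. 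Second, controlling $c^{1}(s')-c^{1}(0)$ for $s'$ in a fixed left half-plane with unbounded imaginary part requires analyticity and polynomial bounds on the unit-obstacle Helmholtz capacitance, which in turn must be extracted from the cut-off resolvent estimates invoked in Corollary~\ref{cor:analyticity}.
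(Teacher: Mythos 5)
Your plan for the $\tau\le 0$ regime is essentially the paper's: represent the error through $\mathcal{D}^{\re}_{app}(s)-\mathcal{D}^{\re}_D(s)$, prove an $O(\re^2)$ Laplace-domain bound (low frequencies by comparing the exact boundary density with $\sigma^{\re}P_{\vec 0}$, high frequencies by trading a power of $|s\re|$ against the smooth data), and integrate on $\Re s=0$ with $p=k_{reg}\ge 7$. But the step you yourself single out as the crux — the monopole identification $M^{\re}(s)=-c^{\re}\,\widehat{u^{inc}}(s,\vec 0)+\mathcal{E}^{\re}(s)$ with $|\mathcal{E}^{\re}(s)|\lesssim \re^2(1+|s|)(1+|s\re|)^k$ \emph{uniformly on all of} $\mathscr{D}^{\re}$ — is not established, and the route you sketch for it does not work as stated. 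The function $s'\mapsto c^{1}(s')=\int_{\Gamma}(\mathbb{S}^{1}(s'))^{-1}1$ is \emph{not} analytic on the fixed half-plane $\{\Re s'>-c_{\Omega}\}$: the single-layer operator $\mathbb{S}^{1}(s')$ fails to be injective at the interior Dirichlet frequencies $s'=\pm i\sqrt{\lambda_j}$, which lie on the imaginary axis inside that region (this is exactly why the paper's Lemma \ref{lem:err_lmbd} is confined to $|s\re|\le\theta$ with $\theta$ small, where a Neumann series around $\mathbb{S}_0$ is available, and why the regime $|s\re|>\theta$ is handled by the crude bound of Proposition \ref{prop:hf} that never touches $(\mathbb{S}^{\re}(s))^{-1}$). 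Moreover, "extending the identity from $\mathbb{C}^+$ to $\mathscr{D}^{\re}$ by analytic continuation" cannot supply the quantitative bound on $\mathcal{E}^{\re}$ along $\Re s=-\gamma_{\Omega}/\re$: inequalities do not continue analytically, and no estimate on $(\mathbb{S}^{1}(s'))^{-1}$ in a left half-plane with unbounded imaginary part is contained in the toolbox you invoke (Corollary \ref{cor:analyticity} bounds the cut-off resolvent, not the boundary operator). As written, the right-hand side of your identity is not even analytic across the spurious frequencies, so the claimed uniform splitting on $\mathscr{D}^{\re}$ is unavailable without substantial new work.

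The gap is compounded by a missed shortcut: for $\tau>0$ no Laplace-domain error analysis is needed at all. Since $u^{inc}(\cdot,\vec 0)$ is supported in $[\,\|\bx\|-R_0,\|\bx\|+R_0\,]$ by the strong Huygens principle, the approximate field \eqref{eq:as1} vanishes identically on $K_{\operatorname{ff}}$ for $t>R_0+R_{\operatorname{ff}}$, so there $\|u^{\re}_{sc}(t)-u^{\re}_{app}(t)\|_{L^2(K_{\operatorname{ff}})}=\|u^{\re}_{sc}(t)\|_{L^2(K_{\operatorname{ff}})}$ and Theorem \ref{theorem:answer1} gives the $\re^{k_{reg}-1}\mathrm{e}^{-\gamma_{\Omega}\tau/\re}$ bound directly; this is the paper's entire argument for that regime. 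Your alternative (shifting the error integral to $\Re s=-\gamma_{\Omega}/\re$) could in principle be salvaged — on that contour $|s\re|\ge\gamma_{\Omega}$, so the crude high-frequency trade-off of Proposition \ref{prop:hf} type would suffice and the delicate capacitance identification would not even be needed there — but you neither notice this nor verify that the constituent bounds (e.g. those of Lemma \ref{lem:estsigma}, stated for $s\in\mathbb{C}^+$) hold on the shifted contour, so as submitted the $\tau>0$ case rests on the unproven uniform estimate above. In short: the $\tau\le 0$ half is on the paper's track but its key low-frequency lemma is only sketched, and the $\tau>0$ half relies on a step that fails as formulated while overlooking the elementary Huygens argument that the paper uses.
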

\begin{proof}
	The proof is decomposed into two parts. 
	
	\textbf{Proof for $t>R_0+R_{\operatorname{ff}}$. }
	By the strong Huygens principle and finite speed of wave propagation, since the initial data $u_0, \, v_0$ are supported inside $B(0, R_0)\setminus \overline{B(0, r_0)}$, we have
	\begin{align*}
	\operatorname{supp}u^{inc}(t)\subseteq \cup_{\vec{x}\in \overline{\mathcal{O}_0}}\{\vec{y}\in \mathbb{R}^3: \|\vec{x}-\vec{y}\|=t\}\subseteq \{\vec{y}\in \mathbb{R}^3: \, t-R_0\leq \|\vec{y}\|\leq t+R_0\}.
	\end{align*}
We conclude that 
$u^{inc}(t-\|\bx\|,\vec{0})=0$  for $t>\|\bx\|+R_0 \text{ and }t<\|\bx\|-R_0$, and hence for $t>R_0+R_{\operatorname{ff}}$, $
\|u^{\re}_{app}(t)\|_{L^2(K_{\operatorname{ff}})}=0.$ 
This shows that for $t>R_{\operatorname{ff}}+R_0$, $
	\|u^{\re}_{sc}(t)-u^{\re}_{app}(t)\|_{L^2(K_{\operatorname{ff}})}= \|u^{\re}_{sc}(t)\|_{L^2(K_{\operatorname{ff}})},$
and it suffices to apply the result of Theorem \ref{theorem:answer1}. 
	
\textbf{Proof for $t\leq R_0+R_{\operatorname{ff}}$.} See Appendix \ref{appendix:error}.	
\end{proof}
This result shows that the long-time error of the asymptotic model behaves like the scattered field, which itself decays rapidly as $t\rightarrow +\infty$, see the discussion after Theorem \ref{theorem:answer1}. For times where we expect the scattered field to be $O(\re)$ (i.e. where using the asymptotic model is of interest), the model produces the absolute error of $O(\re^2)$.  

\textbf{One answer to the question posed in the title of the paper.} Comparing the estimates on the field of Theorem \ref{theorem:answer1} and error estimates of Theorem  \ref{theorem:answer2}, we conclude that obstacles can be considered 'small', in other words, well-approximated by the point scatterer \eqref{eq:as1}, when, with some $\kappa>0$,  
$
	\re \sqrt{E_0^7}\ll \re^{\kappa},
$
which ensures the convergence of the approximated scattered field of $O(\re^{1+\kappa})$. The requirement of the regularity of the initial data is hardly optimal, and stems from numerous estimates non-optimal in terms of the frequency parameter $s$ we have made before in the paper.

\section{Discussion and extension of the results}
We have studied the problem of the sound-soft scattering by a small 3D star-shaped particle of an incident wave that obeys the wave equation with inhomogeneous initial conditions, and have proven that replacing the particle by a point source does not deteriorate the solution error in time.
An interesting extension is the 2D case, where the Huygens principle no longer holds true; we remark that the techniques used in this paper apply almost verbatim, with the difficulty being passing from the Laplace domain to the time domain, see  \cite{christiansen_datchev_yang}. Another possible extension of the work is the analysis of the scattering by many particles, or of the situations of resonances. 
\section*{Acknowledgments}
The first author is grateful to Andrea Mantile (University of Reims, France) for many fruitful discussions, and to Mourad Sini (RICAM, Austria) for asking the question that is in the origin of this work.
\appendix

\section{Proof of the second part of Theorem \ref{theorem:answer2}}
\label{appendix:error}
We proceed as follows. First of all, in Section \ref{sec:td_rep}, we define a convenient representation of the approximated scattered field via a Laplace contour integral, analogous to \eqref{eq:ueps}. The error analysis then will reduce to the frequency-domain error analysis of the integrand, see Section \ref{sec:lpl_dom}. Combining the Laplace domain representation with the frequency domain analysis is a subject of Section \ref{sec:fin_proof}.
\subsection{Time-domain representation formula}
\label{sec:td_rep}
First of all, let us define an approximated frequency-domain single-layer potential:
\begin{align*}
		(\mathcal{S}^{\re}_{app}\varphi)(\bx):=\frac{\mathrm{e}^{-s\|\bx\|}}{4\pi\|\bx\|}\int_{\Gamma^{\re}}\varphi(\bx)d\Gamma_{\bx}, \quad s\in \mathbb{C}^+, \quad \varphi\in L^2(\Gamma^{\re}).
\end{align*}
For sufficiently regular $q$, we define an operator of evaluation in $\vec{0}$: 
$P_{\vec{0}}q=q(\vec{0}).$ 
The following result is evident from the explicit formula for $\hat{u}^{\re}$. 
\begin{proposition}
	\label{prop:holomorph}
	For all $\rho>0$, the function $\mathbb{C}\ni s\mapsto \chi_{\rho}\hat{u}^{\re}_{app}(s,.)$ is $L^2(\Omega^{\re,c})$-entire.
\end{proposition}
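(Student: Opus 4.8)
\textbf{Proof proposal for Proposition \ref{prop:holomorph}.}
The plan is to exhibit $\chi_\rho\hat u^{\re}_{app}(s,\cdot)$ as an explicit, finite combination of elementary $s$-dependent factors, each of which is manifestly entire as an $L^2(\Omega^{\re,c})$-valued (indeed $L^2$-bounded) function, and then invoke the stability of holomorphy under sums, products by scalar holomorphic functions, and composition with bounded linear maps. First I would take the Laplace transform of the defining formula \eqref{eq:as1}: since $u^{\re}_{app}(t,\bx)=-c^{\re}(4\pi\|\bx\|)^{-1}u^{inc}(t-\|\bx\|,\vec0)$, and Laplace transform converts the time shift $t\mapsto t-\|\bx\|$ into multiplication by $\mathrm{e}^{-s\|\bx\|}$, we obtain
\begin{align*}
	\hat u^{\re}_{app}(s,\bx)=-\,c^{\re}\,\frac{\mathrm{e}^{-s\|\bx\|}}{4\pi\|\bx\|}\,\widehat{u^{inc}}(s,\vec0)=-\,c^{\re}\,\big(\mathcal S^{\re}_{app}\sigma^{\re}\big)(\bx)\,P_{\vec0}\,\widehat{u^{inc}}(s,\cdot),
\end{align*}
where I used that $\int_{\Gamma^{\re}}\sigma^{\re}\,d\Gamma=c^{\re}$ so that $\mathcal S^{\re}_{app}\sigma^{\re}$ reproduces the Green's-function profile $\mathrm{e}^{-s\|\bx\|}/(4\pi\|\bx\|)$ up to the constant $c^{\re}$. (Alternatively one may simply write $\hat u^{\re}_{app}(s,\bx)=-c^{\re}\,\mathrm{e}^{-s\|\bx\|}/(4\pi\|\bx\|)\cdot\widehat{u^{inc}}(s,\vec0)$ and skip the single-layer reformulation; the point is that the $\bx$-dependence and $s$-dependence decouple into a scalar amplitude times a fixed profile.)

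Next I would identify the two $s$-dependent ingredients and argue holomorphy of each. The scalar amplitude $s\mapsto\widehat{u^{inc}}(s,\vec0)$ is entire: since $u_0,v_0$ are supported in $\mathcal O_0$, which is at positive distance $r_0>1$ from $\vec0$, the field $u^{inc}(t,\vec0)$ vanishes for $t$ outside a bounded interval (by finite speed of propagation and strong Huygens, exactly as used in the proof of Theorem \ref{theorem:answer2}), so its Laplace transform is the Laplace transform of a compactly supported (and, by the regularity \eqref{eq:ic}, bounded) function, hence entire in $s$; this is precisely the evaluation $P_{\vec0}$ applied to the $\mathbb{C}$-entire map $s\mapsto\chi_\rho\mathcal R_0(s)(su_0+v_0)\in H^{k_{reg}+1-1}_{loc}$, composed with a bounded point-evaluation valid because $k_{reg}$ is large. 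The profile $s\mapsto\chi_\rho(\bx)\,\mathrm{e}^{-s\|\bx\|}/(4\pi\|\bx\|)$, viewed as an $L^2(\Omega^{\re,c})$-valued map, is entire because: on $\operatorname{supp}\chi_\rho\cap\Omega^{\re,c}$ one has $\|\bx\|\ge\re$, so the function $\bx\mapsto\mathrm{e}^{-s\|\bx\|}/(4\pi\|\bx\|)$ lies in $L^2$ of that bounded set for every $s$, with $L^2$-norm locally bounded in $s$; and for each fixed $\bx\ne\vec0$ the map $s\mapsto\mathrm{e}^{-s\|\bx\|}$ is entire, so a dominated-convergence (Morera or difference-quotient) argument upgrades this to $L^2$-valued holomorphy on all of $\mathbb{C}$. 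Finally, the constant $c^{\re}=\re c^1$ is $s$-independent, and the product of an $L^2$-valued entire function with a scalar entire function is $L^2$-valued entire, which yields the claim.

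The main obstacle — such as it is — is purely a matter of bookkeeping the domains: one must be careful that $\chi_\rho\hat u^{\re}_{app}(s,\cdot)$ is genuinely square-integrable on $\Omega^{\re,c}$, i.e. that the singularity of $1/\|\bx\|$ at the origin does no harm. This is automatic since $\vec0\in\Omega^{\re}$ and therefore $\|\bx\|\ge\operatorname{dist}(\vec0,\Gamma^{\re})>0$ on $\Omega^{\re,c}$ (in fact $\Omega^{\re}\subseteq B_{\re}$ may be too crude here; one uses $\vec0\in\Omega^{\re}$ so the origin is excluded from $\Omega^{\re,c}$, giving a strictly positive lower bound on $\|\bx\|$ there), so the integrand is bounded on $\operatorname{supp}\chi_\rho\cap\Omega^{\re,c}$ and the $L^2$ bound, together with its local uniformity in $s$, is immediate. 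With that observation in place the holomorphy follows from the elementary closure properties of operator/vector-valued holomorphic functions, and no further work is needed.
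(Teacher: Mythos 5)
Your proposal is correct and follows exactly the route the paper has in mind: the paper simply declares the result ``evident from the explicit formula'' \eqref{eq:two_eq}, i.e.\ $\hat u^{\re}_{app}(s,\cdot)=-c^{\re}\,\mathrm{e}^{-s\|\cdot\|}/(4\pi\|\cdot\|)\,\hat u^{inc}(s,\vec 0)$, and your argument is precisely the fleshed-out version of that: the amplitude $\hat u^{inc}(s,\vec 0)$ is entire (Laplace transform of a compactly supported, bounded signal, thanks to finite propagation speed and strong Huygens), the cut-off Green's-function profile is an $L^2(\Omega^{\re,c})$-valued entire map, and the product of a scalar entire function with a vector-valued entire function is entire. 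Your only caveat (the $1/\|\bx\|$ singularity) is handled correctly via $\vec 0\in\Omega^{\re}$, and is in any case harmless since $\|\bx\|^{-1}$ is locally square-integrable in $\mathbb{R}^3$.
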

Our true starting point is the following representation formula, which will allow to compare the scattered field given by  \eqref{eq:representation_formula_main} and its approximation. 
\begin{proposition}
The approximate field can be written in the following form:
	\begin{align}
	\label{eq:represenation_formula_uapp}
	{u}^{\re}_{app}(t)=-\frac{1}{2\pi i}\int_{\Re s=\mu}	\frac{\mathrm{e}^{st}}{(\mu_*-s)^p}\mathcal{S}^{\re}_{app}(s)\sigma^{\re}P_{\vec{0}}(\mathcal{R}_0(s)(s{u}_0^p+{v}_0^p)) ds, \quad 
	\end{align}
$p\geq 2, \, 0<\mu<\mu_*$, 
provided that $k_{reg}\geq p$, 
with the integral understood as a Bochner integral in $L^2(\Omega^{\re,c}\cap B_R(0)),$ $R>0, $ for all fixed $t\geq 0$. 
\end{proposition}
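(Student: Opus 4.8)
The plan is to start from the definition \eqref{eq:as1} of $u^{\re}_{app}$ and recognize its spatial profile $\tfrac{\mathrm{e}^{-s\|\bx\|}}{4\pi\|\bx\|}$ as the action of the approximate single-layer potential $\mathcal{S}^{\re}_{app}(s)$ on the density $\sigma^{\re}$, whose total mass is the capacitance $c^{\re}=\int_{\Gamma^{\re}}\sigma^{\re}d\Gamma$. Indeed, by construction $(\mathcal{S}^{\re}_{app}\sigma^{\re})(\bx)=\tfrac{\mathrm{e}^{-s\|\bx\|}}{4\pi\|\bx\|}\,c^{\re}$, so the claimed integrand has spatial dependence $-c^{\re}\tfrac{\mathrm{e}^{-s\|\bx\|}}{4\pi\|\bx\|}$ times the scalar $P_{\vec 0}(\mathcal{R}_0(s)(s u_0^p+v_0^p))$. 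The first step is therefore purely algebraic: rewrite \eqref{eq:as1} as
\begin{align*}
u^{\re}_{app}(t,\bx)=-c^{\re}\frac{1}{4\pi\|\bx\|}\,u^{inc}\!\left(t-\|\bx\|,\vec 0\right),
\end{align*}
and observe that $\bx\mapsto \tfrac{1}{4\pi\|\bx\|}g(t-\|\bx\|)$ is, by the standard retarded-potential/convolution identity, exactly the inverse Laplace transform (in $t$) of $\bx\mapsto \tfrac{\mathrm{e}^{-s\|\bx\|}}{4\pi\|\bx\|}\hat g(s)$, with $g(t)=u^{inc}(t,\vec 0)$.

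The second step is to identify $\widehat{u^{inc}(\cdot,\vec 0)}(s)$. From the representation formula \eqref{eq:representation_formula_inc} together with Lemma \ref{lem:resolvent_B}, the first component of $\vec u^{inc}$ has Laplace transform (up to the regularizing factor $(\mu_*-s)^{-p}$ absorbing the $p$ derivatives, as in \eqref{eq:representation_formula_main}) equal to $\mathcal{R}_0(s)(s u_0^p+v_0^p)$, where $\vec u_0^p=(\mu_*-\mathbb B)^p\vec u_0$. Evaluating at $\vec 0$ gives precisely $P_{\vec 0}(\mathcal{R}_0(s)(s u_0^p+v_0^p))$; this is legitimate since $\mathcal{R}_0(s)(su_0^p+v_0^p)$ is smooth near $\vec 0$ (the source $u_0^p,v_0^p$ is supported in $\mathcal{O}_0$, away from the origin, so the Green-function integral \eqref{eq:identR0} defines a smooth function on $B_{R_0/2}$, cf. Lemma \ref{lem:rz}). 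One then inserts the Bromwich/resolvent representation with the weight $(\mu_*-s)^{-p}$ — valid as a Bochner integral provided $k_{reg}\geq p$ and $p\geq 2$, exactly as argued below \eqref{eq:representation_formula} using Remark \ref{rem:bmap} — and multiplies by $-c^{\re}\tfrac{\mathrm{e}^{-s\|\bx\|}}{4\pi\|\bx\|}=-\mathcal{S}^{\re}_{app}(s)\sigma^{\re}$, yielding \eqref{eq:represenation_formula_uapp}.

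The third step is to justify that the resulting contour integral converges as an $L^2(\Omega^{\re,c}\cap B_R(0))$-valued Bochner integral for each fixed $t\geq 0$ and any $R>0$. The scalar factor $P_{\vec 0}(\mathcal{R}_0(s)(su_0^p+v_0^p))$ is bounded, on the line $\Re s=\mu$, by $C(1+|s|)\mathrm{e}^{\mu R_0}\|\vec u_0^p\|$ via the $L^\infty(B_\re)$-bound of Lemma \ref{lem:rz} (with $\re$ replaced by $R_0/2$, say), i.e. it grows at most linearly in $|s|$; the prefactor $(\mu_*-s)^{-p}$ then provides decay $|s|^{-p}$ with $p\geq 2$, and $\|\tfrac{\mathrm{e}^{-s\cdot}}{4\pi\|\cdot\|}\|_{L^2(B_R)}$ is bounded uniformly on $\Re s=\mu$ (it equals $\mathrm{e}^{\mu\|\bx\|}/(4\pi\|\bx\|)$, square-integrable on bounded sets in $\mathbb R^3$). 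Hence the integrand is $O(|s|^{1-p})$ in $L^2(B_R)$-norm, which is integrable in $\omega=\Im s$, giving the claimed convergence; analyticity of the integrand in $s$ across $\mathbb C^+$ (Proposition \ref{prop:holomorph}) shows the value is independent of $\mu\in(0,\mu_*)$. The main obstacle is a careful but routine bookkeeping one: matching the power $p$ of the weight and the number of derivatives transferred onto the data so that the scalar factor is genuinely the Laplace transform of $u^{inc}(\cdot,\vec 0)$ after the $(\mu_*-\mathbb B)^p$ twist, and confirming the point-evaluation $P_{\vec 0}$ is well defined and depends continuously on $s$ — both follow from the support separation $\operatorname{dist}(\operatorname{supp}\vec u_0,\vec 0)\geq r_0>1$ and the explicit kernel \eqref{eq:identR0}.
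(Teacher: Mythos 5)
You take essentially the same route as the paper's proof: Laplace-transform \eqref{eq:as1} using causality and the retarded-kernel identity to obtain $\hat{u}^{\re}_{app}(s)=-\mathcal{S}^{\re}_{app}(s)\sigma^{\re}P_{\vec{0}}\hat{u}^{inc}(s)$ (this is \eqref{eq:two_eq}), identify the scalar factor with $(\mu_*-s)^{-p}P_{\vec{0}}\bigl(\mathcal{R}_0(s)(su_0^p+v_0^p)\bigr)$, check integrability on the line $\Re s=\mu$, and invert. However, the step you set aside as ``routine bookkeeping'' is precisely the core of the paper's argument, and your intermediate formulation of it is not quite correct: it is \emph{false} that $\hat{u}^{inc}(s)$ equals $(\mu_*-s)^{-p}\mathcal{R}_0(s)(su_0^p+v_0^p)$ as a function of $\bx$; the two differ by $\vec{e}_x\cdot\boldsymbol{d}_p(s)$ with $\boldsymbol{d}_p(s)$ as in \eqref{eq:defdps}, and the paper's proof consists exactly in factoring $(\mu_*-s)^{p}-(\mu_*-\mathbb{B})^{p}=(\mathbb{B}-s)\mathcal{P}(s,\mu_*,\mathbb{B})$ to get \eqref{eq:bdp}, and then using that $\mathcal{P}(s,\mu_*,\mathbb{B})$ is a local (differential) operator together with $\vec{0}\notin\operatorname{supp}\vec{u}_0$ to conclude $P_{\vec{0}}(\vec{e}_x\cdot\boldsymbol{d}_p(s))=0$, so the identity holds only \emph{after} the point evaluation. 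Your suggested mechanism (support separation plus the explicit kernel \eqref{eq:identR0}) does give an equivalent derivation --- integrating by parts against $\frac{\mathrm{e}^{-s\|\by\|}}{4\pi\|\by\|}$, which satisfies $(\Delta_{\by}-s^2)\frac{\mathrm{e}^{-s\|\by\|}}{4\pi\|\by\|}=0$ on $\operatorname{supp}\vec{u}_0$, converts each factor $\mu_*-\mathbb{B}$ acting on the data into a factor $\mu_*-s$ --- but this computation has to be carried out; it is the substance of the proposition, not bookkeeping.

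A minor quantitative point: with only the bound of Lemma \ref{lem:rz}, the integrand is $O(|s|^{1-p})$ in $L^2(B_R\cap\Omega^{\re,c})$-norm, which is not absolutely integrable when $p=2$, so your ``integrable since $p\geq2$'' claim needs either $p>2$ or additional decay of $\hat{u}^{inc}(s,\vec{0})$ in $|\Im s|$ coming from the time regularity of $u^{inc}(\cdot,\vec{0})$ (the paper's own justification is equally coarse here, and in the application $p=k_{reg}\geq 7$, so this is harmless).
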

\begin{proof}
Remark that the function  $
	t\mapsto (\bx\mapsto\frac{u^{inc}(t-\|\bx\|, \vec{0})}{4\pi\|\bx\|}\in L^2(\Omega^{\re,c}))$ 
is causal, since $u^{inc}$ is such. 
Then applying the Laplace transform to  $u^{\re}_{app}=-\frac{u^{inc}(t-\|\bx\|, \vec{0})}{4\pi\|\bx\|}$ and using the definitions of $\mathcal{S}^{\re}$, $c^{\re}$, $P_{\mathbf{0}}$ yields the identity:
\begin{align}
	\label{eq:two_eq}
	\hat{u}^{\re}_{app}=-\frac{\mathrm{e}^{-s\|\bx\|}}{4\pi\|\bx\|}\hat{u}^{inc}(s,\vec{0})c^{\re}=-\mathcal{S}^{\re}_{app}(s)\sigma^{\re}P_{\mathbf{0}}\hat{u}^{inc}(s).
\end{align}
By Lemma \ref{lem:resolvent_B}, $
	\hat{u}^{inc}(s)=\mathcal{R}_0(s)(su_0+u_1)=\vec{e}_{x}\cdot (s-\mathbb{B})^{-1}\vec{u}_0.$ 
We fix $\mu_*$ like in \eqref{eq:representation_formula}, take $p\in \mathbb{N}: \, p\geq 2$, and rewrite 
\begin{align}
	\label{eq:uincsp}
	\hat{u}^{inc}(s)&=\vec{e}_{x}\cdot (s-\mathbb{B})^{-1}(\mu_*-\mathbb{B})^{-p}(\mu_*-\mathbb{B})^p\vec{u}_0=\vec{e}_{x}\cdot (\mu_*-\mathbb{B})^{-p}(s-\mathbb{B})^{-1}\vec{u}_0^p.
\end{align}
Let us now consider the following difference, cf. with the above: 
\begin{align}
	\label{eq:defdps}
\boldsymbol{d}_p(s):=\left((\mu_*-\mathbb{B})^{-p}-(\mu_*-s)^{-p}\right) (s-\mathbb{B})^{-1}\vec{u}_0^p.
\end{align}
Remark that 
\begin{align*}
(\mu_*-\mathbb{B})^{-p}-(\mu_*-s)^{-p}&=(\mu_*-s)^{-p}(\mu_*-\mathbb{B})^{-p}((\mu_*-s)^p-(\mu_*-\mathbb{B})^{p})\\
&=(\mu_*-s)^{-p}(\mu_*-\mathbb{B})^{-p}(\mathbb{B}-s)\mathcal{P}(s,\mu_*,\mathbb{B}),
\end{align*}
where $\mathcal{P}(s,\mu_*,\mathbb{B})$ is a polynomial of order $p-1$ in $\mathbb{B}$, $s$, $\mu_*$. Therefore, 
\begin{align}
	\label{eq:bdp}
	\boldsymbol{d}_p(s)=-(\mu_*-s)^{-p}\mathcal{P}(s,\mu_*,\mathbb{B})(\mu_*-\mathbb{B})^{-p}\vec{u}_0^p=-(\mu_*-s)^{-p}\mathcal{P}(s,\mu_*,\mathbb{B})\vec{u}_0.
\end{align}
By \eqref{eq:defdps} and \eqref{eq:uincsp},
\begin{align*}
	\hat{u}^{inc}(s)&=(\mu_*-s)^{-p}\vec{e}_x\cdot(s-\mathbb{B})^{-1}\vec{u}_0^p+\vec{e}_x\cdot\vec{d}_p(s)\\
	&=(\mu_*-s)^{-p}\mathcal{R}_0(s)(su_0^p+v_0^p)+\vec{e}_x\cdot\vec{d}_p(s),
\end{align*}
see Lemma \ref{lem:resolvent_B} for the last expression. We plug in the above expression into \eqref{eq:two_eq}. We remark first of all that ${P}_{\vec{0}}(\vec{e}_x\cdot \vec{d}_p(s))=0$. This follows from \eqref{eq:bdp}, by recalling that $\mathcal{P}(s,\mu_*, \mathbb{B})$ is a differential operator, thus $\operatorname{supp}\mathcal{P}(s,\mu_*, \mathbb{B})\vec{u}_0\subseteq \operatorname{supp}\vec{u}_0$. Therefore, 
\begin{align*}
	\hat{u}^{\re}_{app}(s)=-(\mu_*-s)^{-p}\mathcal{S}^{\re}_{app}(s)\sigma^{\re}P_{\vec{0}}\left(\mathcal{R}_0(s)(su_0^p+v_0^p)\right).
\end{align*}
Remark that $
	|(\mathcal{S}^{\re}_{app}(s)\sigma^{\re})(\bx)|\leq c^{\re}\frac{\mathrm{e}^{-\Re s\|\bx\|}}{4\pi\|\bx\|}. $
Since, additionally, $p\geq 2$, and $\vec{u}_0^p$ is in $L^2(\mathbb{R}^3)$ (guaranteed by the regularity assumption of the proposition), the function $
	s\mapsto \mathrm{e}^{st}(\mu_*-s)^{-p}P_{\vec{0}}\left(\mathcal{R}_0(s)(su_0^p+v_0^p)\right)$ 
is integrable on a line $\Re s=\mu$, $\mu<\mu_*$, due to the straightforward free resolvent bounds of Proposition \ref{prop:free_resolvent}. 

Therefore, the integral in the lhs is a Bochner $L^2(\Omega^{\re,c}\cap B_R(0))$-integral:
\begin{align*}
	 -\int_{\Re s=\mu}(\mu_*-s)^{-p}\mathrm{e}^{st}\mathcal{S}^{\re}_{app}(s)\sigma^{\re}P_{\vec{0}}\left(\mathcal{R}_0(s)(su_0^p+v_0^p)\right)ds=\int_{\Re s=\mu}\mathrm{e}^{st}\hat{u}^{\re}_{app}(s)ds.
\end{align*}
The rhs is equal to $2\pi i u^{\re}_{app}(t)$ by Theorem 4.2.21 in \cite{vv_lpl}.\qed

\end{proof}
\subsection{Laplace-domain error bounds}
\label{sec:lpl_dom}
Recall that the scattering field can be represented with the help of the operator  $\mathcal{D}^{\re}_D(s)$, as defined in \eqref{eq:ded}, acting on the  initial data. We introduce an approximation of $\mathcal{D}^{\re}_D(s)$, $\Re s>0$, 
\begin{align}
	\label{eq:dapp}
	\mathcal{D}_{app}^{\re}(s)=-c^{\re}\frac{\mathrm{e}^{-s\|\bx\|}}{4\pi\|\bx\|}P_{\mathbf{0}}\mathcal{R}_0(s)\equiv -\mathcal{S}^{\re}_{app}(s)\sigma^{\re}P_{\vec{0}}\mathcal{R}_0(s) \in \mathcal{L}(L^2(\mathcal{O}), L^2(\Omega^{\re,c})). 
\end{align}
Indeed, $\hat{u}^{\re}_{app}=\mathcal{D}^{\re}_{app}(su_0+u_1)$. 

Let us state the representation formula, which combines \eqref{eq:representation_formula_main} and \eqref{eq:represenation_formula_uapp}, and definitions \eqref{eq:dapp} and \eqref{eq:ded}:
\begin{align}
	\label{eq:ere}
	e^{\re}(t):=	u^{\re}_{app}(t)-u^{\re}_{sc}(t)&=\frac{1}{2\pi i}\int_{\Re s=\mu}\frac{\mathrm{e}^{st}}{(\mu_*-s)^p}(\mathcal{D}^{\re}_{app}(s)-\mathcal{D}^{\re}_D(s))(su_0^p+v_0^p)ds,
\end{align}
where $p\geq 2, \, 0<\mu<\mu_*.$

We see that to evaluate the error between the scattered and the approximate fields, we need to quantify  $\mathcal{D}_{app}^{\re}(s)-\mathcal{D}^{\re}_D(s)$; this will be done separately for 'low' and 'high' frequencies. In the context of asymptotic time-domain modelling, the high-frequency error bound can be obtained based on stability estimates, and, in principle, for the data smooth in time can be made arbitrarily small, cf. Proposition \ref{prop:hf}. The low frequency errors are the dominant ones. This has been observed also in \cite{korikov} and \cite{korikov_plamenevskii}. We start with the high-frequency estimates. 
\begin{remark}
We repeatedly use the constants defined in Proposition \ref{prop:bounds}: 
\begin{align*}
	C_{r,R}=R^{3/2}r^{-1}, \quad c_{\mathcal{O}_0}=R_0^{3/2}\max(1,r_0^{-2}).
\end{align*}
For operators ${A}: \, L^2(\mathbb{R}^3)\rightarrow L^2(\Omega^{\re,c})$, we use the obvious notation   $\|A\|_{L^2(K_0)\rightarrow L^2(K)}:=\|\mathbb{1}_KA\mathbb{1}_{K_0}\|_{L^2(\mathbb{R}^3)\rightarrow L^2(\Omega^{\re,c})}$. 
\end{remark}
\begin{proposition}[High-frequency estimate]
	\label{prop:hf}
Let $\ell\geq 0$, $R>r>1$. There exists $\re_0>0$, s.t. for all $\theta>0$, $0<\re<\re_0$, and for all $s\in \mathscr{D}^{\re}$ with $|s\re|>\theta$, 
	\begin{align*}
		\|\mathcal{D}_{app}^{\re}(s)-\mathcal{D}^{\re}_D(s)\|_{L^2(\mathcal{O}_0)\rightarrow L^2(B_{r,R})}&\lesssim \re^{\ell+1} |s|^{\ell}\theta^{-\ell}\\
		&\times C_{r,R}c_{\mathcal{O}_0}
	 \mathrm{e}^{|\Re s|_{-}(R_0+R)}(1+|s\re|)^3,
\end{align*}
where the hidden constant depends on $\Omega$, $\re_0$ and $\theta$ only. 
\end{proposition}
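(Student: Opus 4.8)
The plan is to handle the high-frequency window $|s\re|>\theta$ by elementary, direct estimates: there both operators $\mathcal{D}^{\re}_D(s)$ and $\mathcal{D}^{\re}_{app}(s)$ are individually controlled with an $O(\re)$ prefactor, so a triangle inequality suffices, and the extra powers $\re^{\ell}|s|^{\ell}\theta^{-\ell}$ in the statement are then produced for free from the hypothesis $|s\re|>\theta$. This is exactly the regime where the crude bounds of Section~\ref{sec:lcl} are adequate; the complementary low-frequency range (treated separately) carries the dominant part of the error.

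First I would bound the exact term. Since $\chi_{r,R}=1$ on $B_{r,R}$ and $\chi_{R_0}=1$ on $\operatorname{supp}f\subseteq\mathcal{O}_0$, the sharp truncations in $\|\cdot\|_{L^2(\mathcal{O}_0)\to L^2(B_{r,R})}$ are dominated by the smooth cut-offs, so Proposition~\ref{prop:bounds} gives, for $0<\re<\re_0$ and $s\in\mathscr{D}^{\re}$,
\[
\|\mathcal{D}^{\re}_D(s)\|_{L^2(\mathcal{O}_0)\to L^2(B_{r,R})}\lesssim C_{r,R}\,c_{\mathcal{O}_0}\,\re\,\mathrm{e}^{|\Re s|_{-}(R_0+R+\widetilde{c}^{+}_{\Omega}\re)}(1+|s\re|)^3 .
\]
On $\mathscr{D}^{\re}=\{\Re s>-c_\Omega\re^{-1}\}$ one has $|\Re s|_{-}<c_\Omega\re^{-1}$, hence $\mathrm{e}^{|\Re s|_{-}\widetilde{c}^{+}_{\Omega}\re}<\mathrm{e}^{c_\Omega\widetilde{c}^{+}_{\Omega}}$, an $\re$-independent constant depending only on $\Omega$, which I would absorb into the hidden constant, removing the $\widetilde{c}^{+}_{\Omega}\re$ from the exponent.

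Next I would bound the approximate term directly from \eqref{eq:dapp}: for $\bx\in B_{r,R}$ and $f$ supported in $\mathcal{O}_0$,
\[
(\mathcal{D}^{\re}_{app}(s)f)(\bx)=-c^{\re}\,\frac{\mathrm{e}^{-s\|\bx\|}}{4\pi\|\bx\|}\,(\mathcal{R}_0(s)f)(\vec 0),\qquad c^{\re}=\re c^1 .
\]
Using $\|\bx\|\geq r>1$, $|\mathrm{e}^{-s\|\bx\|}|\leq\mathrm{e}^{|\Re s|_{-}R}$, the volume factor $\|1\|_{L^2(B_{r,R})}\lesssim R^{3/2}$, and the point-value bound $|(\mathcal{R}_0(s)f)(\vec 0)|\lesssim R_0^{3/2}r_0^{-1}\mathrm{e}^{|\Re s|_{-}R_0}\|f\|_{L^2(\mathcal{O}_0)}$ (obtained by Cauchy--Schwarz applied to the explicit kernel, exactly as in Lemma~\ref{lem:rz} but with $\operatorname{dist}(\mathcal{O}_0,\{\vec 0\})=r_0$ in place of $r_0-\re$), I obtain
\[
\|\mathcal{D}^{\re}_{app}(s)\|_{L^2(\mathcal{O}_0)\to L^2(B_{r,R})}\lesssim \re\,R^{3/2}r^{-1}R_0^{3/2}r_0^{-1}\mathrm{e}^{|\Re s|_{-}(R_0+R)}\lesssim \re\,C_{r,R}\,c_{\mathcal{O}_0}\,\mathrm{e}^{|\Re s|_{-}(R_0+R)},
\]
where the last step uses $r_0>1$, so $r_0^{-1}<1\leq\max(1,r_0^{-2})$.

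Adding the two bounds by the triangle inequality, and then multiplying the result by $(|s\re|/\theta)^{\ell}\geq1$ — legitimate precisely because $|s\re|>\theta$ and $\ell\geq0$ — yields
\[
\|\mathcal{D}^{\re}_{app}(s)-\mathcal{D}^{\re}_D(s)\|_{L^2(\mathcal{O}_0)\to L^2(B_{r,R})}\lesssim \re^{\ell+1}|s|^{\ell}\theta^{-\ell}\,C_{r,R}\,c_{\mathcal{O}_0}\,\mathrm{e}^{|\Re s|_{-}(R_0+R)}(1+|s\re|)^3,
\]
with hidden constant depending only on $\Omega$, $\re_0$ (and, harmlessly, $\theta$), which is the claimed estimate. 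No genuine obstacle arises: the one point requiring attention is the $\re$-uniformity of the nuisance factor $\mathrm{e}^{c_\Omega\widetilde{c}^{+}_{\Omega}}$ coming from the $\re$-shrinking strip $\mathscr{D}^{\re}$. The conceptual content — motivating the statement rather than complicating its proof — is that in the high-frequency window powers of $|s\re|$ are freely converted into powers of $\re$, so that later, in the contour integral \eqref{eq:ere}, the high-frequency contribution can be made $O(\re^{\ell+1})$ for arbitrary $\ell$ at the price of assuming $k_{reg}$ large.
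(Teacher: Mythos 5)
Your argument is correct and is essentially the paper's own proof: a triangle inequality, the free factor $(|s\re|/\theta)^{\ell}\geq 1$, the bound of Proposition~\ref{prop:bounds} for $\mathcal{D}^{\re}_D$, and absorption of the $\mathrm{e}^{|\Re s|_{-}\widetilde{c}^{+}_{\Omega}\re}$ correction via $|\Re s|_{-}\re\leq c_{\Omega}$ on $\mathscr{D}^{\re}$. The only cosmetic difference is that you bound $\mathcal{D}^{\re}_{app}$ directly from \eqref{eq:dapp} with $c^{\re}=\re c^1$ and the kernel estimate of Lemma~\ref{lem:rz}, whereas the paper invokes Lemma~\ref{lem:estsigma}, which encapsulates the same computation.
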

Before proving this result, let us state an auxiliary bound.
\begin{lemma}
	\label{lem:estsigma}
	Let $R>r>1$. Then, there exist $\re_0, \, C>0$, s.t. for all $0<\re<\re_0$, all $s\in\mathbb{C}^+$, \begin{align*}
	\|\sigma^{\re}P_{\vec{0}}\mathcal{R}_0(s)\|_{L^2(\mathcal{O}_0)\rightarrow L^2(\Gamma^{\re})}&\leq C \mathrm{e}^{|\Re s|_{-}(R_0+\re)}R_0^{3/2}r_0^{-1},\\
	\|\mathcal{D}^{\re}_{app}(s)\|_{L^2(\mathcal{O}_0)\rightarrow L^2(B_{r,R})}&\leq C \re C_{r,R}\mathrm{e}^{|\Re s|_{-}(R_0+R+2\re)}R_0^{3/2}r_0^{-1}.
	\end{align*}
\end{lemma}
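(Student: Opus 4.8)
The plan is to reduce both estimates to the pointwise bound on the free resolvent near the origin from Lemma~\ref{lem:rz}, together with an elementary scaling identity for the equilibrium density $\sigma^{\re}$. First I would record that, substituting $\by=\re\by'$ in the integral equation defining $\sigma^{\re}$, one obtains $\sigma^{\re}(\by)=\re^{-1}\sigma^{1}(\by/\re)$, and hence $\|\sigma^{\re}\|_{L^2(\Gamma^{\re})}=\|\sigma^{1}\|_{L^2(\Gamma^{1})}=:c_{\Omega}$, a finite constant depending only on $\Omega$ (consistent with $c^{\re}=\re c^{1}$). Next, since $\operatorname{supp}f\subseteq\overline{\mathcal{O}_0}$ sits at distance $\ge r_0-\re$ from $B_{\re}$, the function $\mathcal{R}_0(s)f$ is real-analytic in a neighbourhood of $\vec{0}$, so $P_{\vec{0}}\mathcal{R}_0(s)f=(\mathcal{R}_0(s)f)(\vec{0})$ is well defined and, by Lemma~\ref{lem:rz}, $|P_{\vec{0}}\mathcal{R}_0(s)f|\lesssim R_0^{3/2}(r_0-\re)^{-1}\mathrm{e}^{|\Re s|_{-}(R_0+\re)}\|f\|_{L^2(\mathcal{O}_0)}$; choosing $\re_0\le r_0/2$ lets me replace $(r_0-\re)^{-1}$ by $2r_0^{-1}$ uniformly in $\re<\re_0$.

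For the first bound I would simply observe that $\sigma^{\re}P_{\vec{0}}\mathcal{R}_0(s)f$ is the scalar $(\mathcal{R}_0(s)f)(\vec{0})$ times the fixed $L^2(\Gamma^{\re})$-function $\sigma^{\re}$, so that $\|\sigma^{\re}P_{\vec{0}}\mathcal{R}_0(s)f\|_{L^2(\Gamma^{\re})}=\|\sigma^{\re}\|_{L^2(\Gamma^{\re})}\,|(\mathcal{R}_0(s)f)(\vec{0})|$, and the claim follows from the two facts above, with $C$ absorbing $c_{\Omega}$ and the hidden constant of Lemma~\ref{lem:rz}. For the second bound I would use the explicit expression $(\mathcal{D}^{\re}_{app}(s)f)(\bx)=-\frac{\mathrm{e}^{-s\|\bx\|}}{4\pi\|\bx\|}\,c^{\re}\,(\mathcal{R}_0(s)f)(\vec{0})$, which results from applying $\mathcal{S}^{\re}_{app}(s)$ to $\sigma^{\re}$ times a constant and using $\int_{\Gamma^{\re}}\sigma^{\re}\,d\Gamma=c^{\re}=\re c^{1}$. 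On $B_{r,R}$ one has $|\mathrm{e}^{-s\|\bx\|}|\le\mathrm{e}^{|\Re s|_{-}R}$ and $\|\bx\|^{-1}\le r^{-1}$, hence $\bigl\|\tfrac{\mathrm{e}^{-s\|\cdot\|}}{4\pi\|\cdot\|}\bigr\|_{L^2(B_{r,R})}\lesssim r^{-1}R^{3/2}\mathrm{e}^{|\Re s|_{-}R}=C_{r,R}\mathrm{e}^{|\Re s|_{-}R}$; multiplying by $|c^{\re}|=\re c^{1}$ and by the pointwise resolvent bound yields the stated estimate, the factor $\mathrm{e}^{|\Re s|_{-}\re}$ coming from Lemma~\ref{lem:rz} being harmlessly absorbed into $\mathrm{e}^{2|\Re s|_{-}\re}$.

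There is no genuine difficulty here; the only points requiring care are getting the power of $\re$ right in the scaling $\sigma^{\re}(\by)=\re^{-1}\sigma^{1}(\by/\re)$, so that $\|\sigma^{\re}\|_{L^2(\Gamma^{\re})}$ is $\re$-independent (matching the absence of an $\re$-prefactor in the first bound), and using the smallness of $\re_0$ to convert $(r_0-\re)^{-1}$ into a clean $r_0^{-1}$.
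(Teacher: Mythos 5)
Your proof is correct and follows essentially the same route as the paper: the scaling identity $\sigma^{\re}(\by)=\re^{-1}\sigma^{1}(\by/\re)$ giving $\|\sigma^{\re}\|_{L^2(\Gamma^{\re})}\lesssim 1$, the pointwise bound of Lemma \ref{lem:rz} at the origin (with $(r_0-\re)^{-1}\lesssim r_0^{-1}$ for small $\re_0$), and a sup-bound of the kernel on $B_{r,R}$. The only cosmetic difference is that you extract the factor $\re$ from $c^{\re}=\re c^{1}$ via the rank-one formula \eqref{eq:dapp}, whereas the paper gets it from $\|\mathcal{S}^{\re}_{app}(s)\|_{L^2(\Gamma^{\re})\rightarrow L^2(B_{r,R})}\lesssim \re$ by Cauchy--Schwarz on $\Gamma^{\re}$; the two computations are equivalent.
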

\begin{proof}
	One verifies that $\sigma^{\re}(\bx)$ defined after \eqref{eq:as1} satisfies $\sigma^{\re}(\bx)=\re^{-1}\sigma^1(\re^{-1}\bx)$, therefore $\|\sigma^{\re}\|_{L^2(\Gamma^{\re})}\lesssim 1$. The sought first bound follows from  $\|P_{\vec{0}}\mathcal{R}_0(s)\|_{L^2(\mathcal{O}_0)\rightarrow \mathbb{C}}\lesssim \|\mathcal{R}_0(s)\|_{L^2(\mathcal{O}_0)\rightarrow L^{\infty}(B_{\re})}$ and using Lemma \ref{lem:rz}.
	
	It remains to bound 
	\begin{align*}
		\|\mathcal{D}_{app}^{\re}(s)\|\leq \|\mathcal{S}^{\re}_{app}(s)\|_{L^2(\Gamma^{\re})\rightarrow L^2(B_{r,R})}\|\sigma^{\re}P_{\vec{0}}\mathcal{R}_0(s)\|_{L^2(\mathcal{O}_0)\rightarrow L^2(\Gamma^{\re})}.
	\end{align*}
	We then have the following, for each $g\in L^2(\Gamma^{\re})$: $$
	\|\mathcal{S}^{\re}_{app}(s)g\|^2_{L^2(B_{r,R})}\lesssim R^{3}\|\mathcal{S}^{\re}_{app}(s)g\|^2_{L^{\infty}(B_{r,R})}\lesssim \int_{\Gamma^{\re}}\frac{\mathrm{e}^{-2|\Re s|_{-}(R+\re)}}{4\pi\|r-\re\|^2}d\Gamma_{\by}\|g\|^2_{L^2(\Gamma^{\re})},$$
	the latter inequality following by the Cauchy-Schwarz inequality. The desired result is straightforward by combining the above and the previously obtained bound.
\end{proof}
\begin{proof}[Proof of Proposition \ref{prop:hf}]
Omitting the index in the norm  $\|.\|_{L^2(\mathcal{O})\rightarrow L^2(B_{r,R})}\equiv \|.\|$, we start by using a triangle inequality combined with $|s\re|\theta^{-1}>1$ and $\ell\geq 0$:
\begin{align*}
	\|\mathcal{D}^{\re}_{app}(s)-\mathcal{D}^{\re}(s)\|\leq |s\re|^{\ell}\theta^{-\ell}(	\|\mathcal{D}^{\re}_{app}(s)\|+\|\mathcal{D}^{\re}_D(s)\|).
\end{align*}	
We then combine the bounds of Proposition \ref{prop:bounds} and Lemma \ref{lem:estsigma},  and use that $s\in\mathscr{D}^{\re}$, thus $|\Re s|_{-}\re \leq c_{\Omega}$.  
\qed
\end{proof}
The low-frequency analysis is slightly more involved. 
\begin{proposition}
	\label{prop:lf}
	Let $R>r>1.$ There exist $\theta>0$ and $\re_0>0$ sufficiently small, s.t. for all $0<\re<\re_0$, all $s\in \mathscr{D}^{\re}$, s.t. $|s\re|\leq \theta$,  
	\begin{align*}
		\|\mathcal{D}^{\re}_{app}(s)-\mathcal{D}^{\re}_{D}(s)\|_{L^2(\mathcal{O}_0)\rightarrow L^{2}(B_{r,R})}&\lesssim \re^2(1+|s|+r^{-1})c_{r,R}c_{\mathcal{O}_0}\mathrm{e}^{|\Re s|_{-}(R_0+R)},
	\end{align*}
with the hidden constant depending on $\Omega$, $\theta$ and $\re_0$ only. 
\end{proposition}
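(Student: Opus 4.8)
The plan is to work in the regime $|s\re|\le\theta$ (with $\theta$ small, to be fixed below) and to identify $\mathcal{D}^{\re}_{app}(s)$ as the leading, monopole, contribution to the exact operator $\mathcal{D}^{\re}_D(s)$ on $B_{r,R}$, with an $O(\re^2)$ remainder. Recall from \eqref{eq:hatug} that $p^{\re}:=\mathcal{D}^{\re}_D(s)f$ is the outgoing solution of $(s^2-\Delta)p^{\re}=0$ in $\Omega^{\re,c}$ with $\gamma_0^{\re}p^{\re}=-\gamma_0^{\re}\mathcal{R}_0(s)f$. First I would represent $p^{\re}=\mathcal{S}^{\re}(s)\psi^{\re}$, where $\mathcal{S}^{\re}(s)$ is the single‑layer potential with kernel $\mathrm{e}^{-s\|\bx-\by\|}/(4\pi\|\bx-\by\|)$ and $\psi^{\re}=-V^{\re}(s)^{-1}\gamma_0^{\re}\mathcal{R}_0(s)f$, $V^{\re}(s)$ the corresponding single‑layer boundary operator. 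A preliminary step is to show that $V^{\re}(s)\colon H^{-1/2}(\Gamma^{\re})\to H^{1/2}(\Gamma^{\re})$ is boundedly invertible with $\|V^{\re}(s)^{-1}\|=O(\re^{-1})$ whenever $|s\re|\le\theta$: by the dilation $\bx\mapsto\re^{-1}\bx$ (the operator $\mathcal{U}^{\re}$ of Proposition \ref{prop:scaling}) one has $V^{\re}(s)=\re\,(\mathcal{U}^{\re})^{-1}V^{1}(\re s)\mathcal{U}^{\re}$, the map $\zeta\mapsto V^{1}(\zeta)$ is analytic near $\zeta=0$, and $V^{1}(0)$ is coercive on $H^{-1/2}(\Gamma^{1})$; this is exactly where the admissible threshold $\theta$ gets fixed. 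The identity $p^{\re}=\mathcal{S}^{\re}(s)\psi^{\re}$ holds for $\Re s>0$ and extends to $\mathscr{D}^{\re}\cap\{|s\re|\le\theta\}$ by analytic continuation in $s$.

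\textbf{Splitting the error.} Since $\sigma^{\re}=V^{\re}(0)^{-1}1$ and, by \eqref{eq:dapp}, $\mathcal{D}^{\re}_{app}(s)f=-(\mathcal{R}_0(s)f)(\vec{0})\,\mathcal{S}^{\re}_{app}(s)\sigma^{\re}$, I would write $\gamma_0^{\re}\mathcal{R}_0(s)f=(\mathcal{R}_0(s)f)(\vec{0})\cdot 1+g^{\re}$ with $g^{\re}:=\gamma_0^{\re}\bigl(\mathcal{R}_0(s)f-(\mathcal{R}_0(s)f)(\vec{0})\bigr)$, so that $\psi^{\re}=-(\mathcal{R}_0(s)f)(\vec{0})\,\sigma^{\re}(s)-V^{\re}(s)^{-1}g^{\re}$ with $\sigma^{\re}(s):=V^{\re}(s)^{-1}1$, and therefore
\[
\mathcal{D}^{\re}_{app}(s)f-\mathcal{D}^{\re}_D(s)f=(\mathcal{R}_0(s)f)(\vec{0})\Bigl[(\mathcal{S}^{\re}(s)-\mathcal{S}^{\re}_{app}(s))\sigma^{\re}(s)+\mathcal{S}^{\re}_{app}(s)\bigl(\sigma^{\re}(s)-\sigma^{\re}\bigr)\Bigr]+\mathcal{S}^{\re}(s)V^{\re}(s)^{-1}g^{\re}.
\]
The three terms are, respectively, the ``far‑field kernel'' error, the ``dynamic vs.\ static capacitance'' error, and the ``non‑constant incident data'' error.

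\textbf{The three estimates on $B_{r,R}$.} For the first term one Taylor‑expands the kernel at $\by=\vec{0}$: for $\bx\in B_{r,R}$, $\by\in\Gamma^{\re}\subset B_{\re}$ one has $\bigl|\mathrm{e}^{-s\|\bx-\by\|}/\|\bx-\by\|-\mathrm{e}^{-s\|\bx\|}/\|\bx\|\bigr|\lesssim\re(1+|s|)r^{-1}\mathrm{e}^{|\Re s|_{-}(R+\re)}\|\bx\|^{-1}$, which with $\|\sigma^{\re}(s)\|_{L^1(\Gamma^{\re})}=O(\re)$ (a scaling identity as in Lemma \ref{lem:estsigma}) and $\|\cdot\|_{L^2(B_{r,R})}\lesssim R^{3/2}\|\cdot\|_{L^\infty(B_{r,R})}$ gives an $O(\re^2(1+|s|)C_{r,R}\mathrm{e}^{|\Re s|_{-}R})$ contribution. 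The second term only sees the total charge, $\mathcal{S}^{\re}_{app}(s)(\sigma^{\re}(s)-\sigma^{\re})=(c^{\re}(s)-c^{\re})\mathrm{e}^{-s\|\bx\|}/(4\pi\|\bx\|)$, and writing $V^{\re}(s)=V^{\re}(0)+(V^{\re}(s)-V^{\re}(0))$ with $\|V^{\re}(s)-V^{\re}(0)\|\lesssim|s|\re^2$ (the kernel difference $\mathrm{e}^{-s\|\bx-\by\|}-1$ is $O(|s\re|)$ on $\Gamma^{\re}\times\Gamma^{\re}$ and $|\Gamma^{\re}|=O(\re^2)$) yields $|c^{\re}(s)-c^{\re}|\lesssim|s|\re^2$, hence again $O(\re^2|s|)$. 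For the third term, the far field of $\mathcal{S}^{\re}(s)V^{\re}(s)^{-1}g^{\re}$ on $B_{r,R}$ is governed by its monopole $\int_{\Gamma^{\re}}V^{\re}(s)^{-1}g^{\re}\,d\Gamma$ plus higher multipoles; using that $V^{\re}(s)$ is (complex) symmetric, $\int_{\Gamma^{\re}}V^{\re}(s)^{-1}g^{\re}\,d\Gamma=\int_{\Gamma^{\re}}g^{\re}\,\sigma^{\re}(s)\,d\Gamma$, and since $\mathcal{R}_0(s)f$ is analytic near $\Gamma^{\re}$ one has $g^{\re}(\bx)=\nabla(\mathcal{R}_0(s)f)(\vec{0})\cdot\bx+O(\re^2)$ on $\Gamma^{\re}$, so this pairing equals $\nabla(\mathcal{R}_0(s)f)(\vec{0})\cdot\int_{\Gamma^{\re}}\bx\,\sigma^{\re}(\bx)\,d\Gamma_{\bx}+O(\re^2\|\nabla(\mathcal{R}_0(s)f)(\vec{0})\|)+O(|s\re|\re^2)$; the first moment $\int_{\Gamma^{\re}}\bx\,\sigma^{\re}=\re^2\int_{\Gamma^{1}}\bx\,\sigma^{1}=O(\re^2)$, so the monopole is $O(\re^2)\,\|\nabla\mathcal{R}_0(s)f\|_{L^\infty(B_{\re})}$, and the dipole and higher contributions are bounded by $O(\re^2)\,\|\nabla\mathcal{R}_0(s)f\|_{L^\infty(B_{\re})}$ as well after estimating $\|V^{\re}(s)^{-1}g^{\re}\|_{L^1(\Gamma^{\re})}\lesssim\re\,\|\nabla\mathcal{R}_0(s)f\|_{L^\infty(B_{\re})}$. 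Finally $|(\mathcal{R}_0(s)f)(\vec{0})|$ and $\|\nabla\mathcal{R}_0(s)f\|_{L^\infty(B_{\re})}$ are bounded via Lemma \ref{lem:rz} by $c_{\mathcal{O}_0}\mathrm{e}^{|\Re s|_{-}(R_0+\re)}(1+|s|+r^{-1})\|f\|_{L^2(\mathcal{O}_0)}$, and collecting the three contributions gives the claimed bound with constant $C_{r,R}c_{\mathcal{O}_0}$ (which, up to the relabelling $C_{r,R}\leftrightarrow c_{r,R}$, is the one in the statement).

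\textbf{Main obstacle.} The delicate point is the third estimate: a crude bound $\int_{\Gamma^{\re}}g^{\re}\sigma^{\re}(s)\,d\Gamma\le\|g^{\re}\|_{H^{1/2}(\Gamma^{\re})}\|\sigma^{\re}(s)\|_{H^{-1/2}(\Gamma^{\re})}$ only gives $O(\re^{3/2})$ --- a half power of $\re$ short of what Theorem \ref{theorem:answer2} requires --- so one genuinely has to exploit both the near‑linearity of $g^{\re}$ on $\Gamma^{\re}$ and the symmetry of $V^{\re}(s)$, which together turn the ``effective monopole radiated by the non‑constant part of the incident field'' into a first moment of the equilibrium density, of size $O(\re^2)$. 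A second, more bookkeeping‑type, difficulty is the $\re$‑dependence of the boundary Sobolev norms on the shrinking surface $\Gamma^{\re}$ (which, as elsewhere in the paper, forces the splitting into the $\mathbb{P}_0^{\re}$ and $\mathbb{P}_{\perp}^{\re}$ components, cf.\ Lemma \ref{lem:rz0}), together with the uniform invertibility of $V^{\re}(s)$ for $|s\re|\le\theta$ that pins down the threshold $\theta$ appearing in the statement.
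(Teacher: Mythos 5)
Your proposal is correct in substance and rests on the same pillars as the paper's proof: the boundary-integral representation \eqref{eq:sred} (your $V^{\re}(s)$ is the paper's $\mathbb{S}^{\re}(s)$), uniform invertibility of the rescaled boundary operator for $|s\re|\leq\theta$ via a Neumann series around $\mathbb{S}(0)$ (which is exactly how the paper fixes $\theta$ in Lemma \ref{lem:err_lmbd}), and a Taylor expansion of the kernel for the far-field comparison of $\mathcal{S}^{\re}$ with $\mathcal{S}^{\re}_{app}$ (Lemma \ref{lem:dif_operators}). Where you differ is the algebra of the splitting: the paper uses the two-term decomposition \eqref{eq:uscre} and bounds the \emph{whole} density difference $(\mathbb{S}^{\re}(s))^{-1}\gamma_0^{\re}v-\sigma^{\re}P_{\vec{0}}v$ in $L^2(\Gamma^{\re})$ by $\re\left(\|\nabla v\|_{L^{\infty}(B_{\re})}+\|sv\|_{L^{\infty}(B_{\re})}\right)$ (Lemma \ref{lem:err_lmbd}), whereas you split into three pieces (kernel error on the dynamic equilibrium density, dynamic-vs-static capacitance, non-constant-data term) and treat the last one through a multipole expansion combined with the symmetry of the boundary operator, reducing it to a first moment of the equilibrium density of size $O(\re^2)$. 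Both routes deliver the stated $\re^2(1+|s|+r^{-1})$ bound with the correct exponential weights on $\mathscr{D}^{\re}$.

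One point of disagreement: your ``main obstacle'' is a misdiagnosis, and the symmetry/first-moment machinery is not needed. The half-power loss you fear comes only from choosing the $H^{1/2}\times H^{-1/2}$ pairing on the shrinking surface. Working in $L^2(\Gamma^{\re})$ instead: the mean value theorem gives $\|g^{\re}\|_{L^2(\Gamma^{\re})}\lesssim \re^{2}\|\nabla v\|_{L^{\infty}(B_{\re})}$ (one power of $\re$ from the oscillation of $v$ over $\Gamma^{\re}$, one from $|\Gamma^{\re}|^{1/2}$), the rescaled inverse then yields $\|(\mathbb{S}^{\re}(s))^{-1}g^{\re}\|_{L^2(\Gamma^{\re})}\lesssim\re\|\nabla v\|_{L^{\infty}(B_{\re})}$, and the second bound of Lemma \ref{lem:dif_operators}, $\|\mathcal{S}^{\re}(s)\|_{L^2(\Gamma^{\re})\rightarrow L^2(B_{r,R})}\lesssim\re$, already produces the $O(\re^{2})$ contribution; this is precisely the paper's route, with the $\|\cdot\|_{L^\infty}$ bounds supplied by Lemma \ref{lem:rz}. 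Your extra step is therefore valid but unnecessary; it buys at most a sharper structural description of the error (the residual monopole as a first moment of $\sigma^{\re}$), at the price of additional bookkeeping such as replacing $\sigma^{\re}(s)$ by $\sigma^{\re}$ in the moment, which you do control.
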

To prove this result, we will make use of a representation formula for the exact solution $\hat{u}^{\re}_{sc}$ adapted to our needs. Recalling the definition of the single-layer potential $\mathcal{S}^{\re}(s)\in \mathcal{L}(H^{-1/2}(\Gamma^{\re}), H^1(\Omega^{\re,c}))$  
\begin{align*}
	\mathcal{S}^{\re}(s)\varphi=\int_{\Gamma^{\re}}\frac{\mathrm{e}^{-s\|\bx-\by\|}}{4\pi\|\bx-\by\|}\varphi(\by)d\Gamma_{\by}, \quad \bx\in \Omega^{\re,c},
\end{align*}
and its trace  $\mathbb{S}^{\re}(s)=\gamma_0^{\re}\mathcal{S}^{\re}(s)\in \mathcal{L}(H^{-1/2+\kappa}(\Gamma^{\re}), H^{1/2+\kappa}(\Gamma^{\re}))$, $\kappa\geq 0$, let us recall that for $s\in \mathbb{C}^+$, it holds that 
\begin{align}
	\label{eq:sred}
	\mathcal{D}^{\re}_D(s)=\mathcal{R}_D^{\re}(s)-\mathcal{R}_0^{\re}(s)=-\mathcal{S}^{\re}(s)(\mathbb{S}^{\re}(s))^{-1}\gamma_0^{\re}\mathcal{R}_0(s), \quad s\in \mathbb{C}^+, 
\end{align}
cf. \eqref{eq:hatug} and theory of boundary integral equations, cf. e.g.  \cite{sauter_schwab} or \cite[Chapter 9]{taylor_pdes}. 
Remark that $\mathbb{S}^{\re}(s)\in \mathcal{L}(H^{-1/2}(\Gamma^{\re}), H^{1/2}(\Gamma^{\re}))$ is indeed invertible for all $s\in \mathbb{C}^+$, and actually everywhere in $\mathbb{C}$ apart from a discrete subset \cite[Proposition 7.10 in Charpter 9]{taylor_pdes}. The mapping $s\mapsto\chi_{\rho}\mathcal{S}^{\re}(s)\in \mathcal{L}(H^{-1/2}(\Gamma^{\re}), H^1(\Omega^{\re,c}))$ can be shown to be an analytic function of $s\in \mathbb{C}$. 

Since the mapping $s\mapsto \chi_{\rho}	(\mathcal{R}_0(s)-\mathcal{R}_D^{\re}(s))\chi_{\rho}\in \mathcal{L}(L^2(\Omega^{\re,c}))$ is meromorphic on $\mathscr{D}^{\re}$, the same holds true for $\chi_{\rho}\mathcal{S}^{\re}(s)(\mathbb{S}^{\re}(s))^{-1}\gamma_0^{\re}\mathcal{R}_0(s)\chi_{\rho}\in \mathcal{L}(L^2(\Omega^{\re,c}))$, and their poles coincide. Thus $s\mapsto-\chi_{\rho}\mathcal{S}^{\re}(s)(\mathbb{S}^{\re}(s))^{-1}\gamma_0^{\re}\mathcal{R}_0(s)\chi_{\rho}$ admits a unique analytic continuation from $\mathbb{C}^+$ into $\mathscr{D}^+$.

Using \eqref{eq:sred} and \eqref{eq:dapp}, we split the error of Proposition \ref{prop:lf} into two parts: 
\begin{align}
		\label{eq:uscre}
	\mathcal{E}^{\re}:=\mathcal{D}^{\re}_{app}-\mathcal{D}^{\re}_{D}=-(\mathcal{S}^{\re}_{app}-\mathcal{S}^{\re})\sigma^{\re}P_{\vec{0}}\mathcal{R}_0+\mathcal{S}^{\re}\left((\mathbb{S}^{\re})^{-1}\gamma_0^{\re}\mathcal{R}_0-\sigma^{\re}P_{\vec{0}}\mathcal{R}_0\right),
\end{align}
and estimate the above two quantities separately.  
\begin{lemma}
	\label{lem:err_lmbd}
There exist $\theta>0$ and $\re_0>0$, s.t. for all $0<\re<\re_0$, $s\in \mathscr{D}^{\re}$ with 
	$|s\re|\leq \theta$, it holds that 
		\begin{align*}
		\|(\mathbb{S}^{\re}(s))^{-1}\gamma_0^{\re}\mathcal{R}_0(s)-\sigma^{\re}P_{\vec{0}}\mathcal{R}_0(s)\|_{L^2(\mathcal{O}_0)\rightarrow L^2(\Gamma^{\re})}&\lesssim \re (1+|s|)c_{\mathcal{O}_0}\mathrm{e}^{|\Re s|_{-}R_0}
	\end{align*}
where the hidden constant depends on $\Omega$, $\theta$, $\re_0$. 
\end{lemma}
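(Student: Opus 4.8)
The plan is to move everything onto the fixed boundary $\Gamma^1$ by the dilation $\bx\mapsto\re\bx$ and then use the analyticity of the single–layer operator in the frequency parameter near $0$. Write $u_s:=\mathcal R_0(s)f$; by the support assumption on $f$ this is $C^\infty$ on $\overline{B_{2\re}}$ once $\re$ is small, and differentiating the explicit kernel exactly as in Lemma \ref{lem:rz} (each derivative costing a factor $\lesssim 1+|s|$ since $\operatorname{dist}(\operatorname{supp}f,B_{2\re})\gtrsim 1$) gives $\|D^\alpha u_s\|_{L^\infty(B_{2\re})}\lesssim (1+|s|)^{|\alpha|}c_{\mathcal O_0}\mathrm e^{|\Re s|_-R_0}\|f\|_{L^2(\mathcal O_0)}$. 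Set $c_0:=P_{\vec0}\mathcal R_0(s)f=u_s(\vec0)$, and recall that $\sigma^{\re}$ is, by its defining equation, $\sigma^{\re}=(\mathbb S^{\re}(0))^{-1}\mathbb 1$ with $\mathbb 1$ the constant function $1$ on $\Gamma^{\re}$. I would then split
\begin{align*}
(\mathbb S^{\re}(s))^{-1}\gamma_0^{\re}u_s-c_0\sigma^{\re}
=\underbrace{(\mathbb S^{\re}(s))^{-1}\bigl(\gamma_0^{\re}u_s-c_0\mathbb 1\bigr)}_{=:\mathrm I}
+\underbrace{c_0\bigl((\mathbb S^{\re}(s))^{-1}-(\mathbb S^{\re}(0))^{-1}\bigr)\mathbb 1}_{=:\mathrm{II}},
\end{align*}
and estimate $\mathrm I$ and $\mathrm{II}$ separately. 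The engine is the scaling identity: with the boundary dilation $(\mathcal V^{\re}\varphi)(\hat\bx):=\varphi(\re\hat\bx)$, $\hat\bx\in\Gamma^1$ (so $\|\mathcal V^{\re}\varphi\|_{L^2(\Gamma^1)}=\re^{-1}\|\varphi\|_{L^2(\Gamma^{\re})}$ and $\mathcal V^{\re}\mathbb 1=\mathbb 1$), a change of variables in the integral kernel of $\mathbb S^{\re}(s)$ yields $\mathbb S^{\re}(s)=\re\,(\mathcal V^{\re})^{-1}\mathbb S^{1}(\re s)\,\mathcal V^{\re}$, hence $(\mathbb S^{\re}(s))^{-1}=\re^{-1}(\mathcal V^{\re})^{-1}(\mathbb S^{1}(\re s))^{-1}\mathcal V^{\re}$.

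For $\mathrm I$, note that $\mathcal V^{\re}(\gamma_0^{\re}u_s-c_0\mathbb 1)=\hat u_s-c_0\mathbb 1$, where $\hat u_s:=u_s(\re\,\cdot\,)|_{\Gamma^1}$. Since $D^k_{\hat\bx}\hat u_s=\re^{k}(D^ku_s)(\re\,\cdot\,)$ and $u_s(\re\hat\bx)-u_s(\vec0)=\int_0^1\nabla u_s(t\re\hat\bx)\cdot\re\hat\bx\,dt$, Taylor's theorem and the derivative bounds above give $\|\hat u_s-c_0\mathbb 1\|_{H^{3/2}(\Gamma^1)}\lesssim\|\hat u_s-c_0\mathbb 1\|_{W^{2,\infty}(\Gamma^1)}\lesssim\re\|\nabla u_s\|_{L^\infty(B_\re)}+\re^2\|D^2u_s\|_{L^\infty(B_\re)}\lesssim\re\,(1+|s|)c_{\mathcal O_0}\mathrm e^{|\Re s|_-R_0}\|f\|_{L^2(\mathcal O_0)}$, where in the last step I use that in the low–frequency regime $|s\re|\le\theta$ one has $\re(1+|s|)^2\lesssim 1+|s|$, so the quadratic Taylor remainder is absorbed. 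On the fixed surface $\Gamma^1$, $\mathbb S^1(z)$ is an elliptic pseudodifferential operator of order $-1$, analytic in $z$, and an isomorphism $H^{1/2}(\Gamma^1)\to H^{3/2}(\Gamma^1)$ at $z=0$ (coercivity of the Laplace single layer), hence, choosing $\theta$ small enough, uniformly invertible on this scale for $|z|\le\theta$. Therefore $\|(\mathbb S^1(\re s))^{-1}(\hat u_s-c_0\mathbb 1)\|_{L^2(\Gamma^1)}\lesssim\|(\mathbb S^1(\re s))^{-1}(\hat u_s-c_0\mathbb 1)\|_{H^{1/2}(\Gamma^1)}\lesssim\|\hat u_s-c_0\mathbb 1\|_{H^{3/2}(\Gamma^1)}$, and undoing the scaling (the $\re^{-1}$ from $(\mathbb S^{\re}(s))^{-1}$ cancels the $\re$ from $\|(\mathcal V^{\re})^{-1}\cdot\|_{L^2(\Gamma^{\re})}=\re\|\cdot\|_{L^2(\Gamma^1)}$) gives $\|\mathrm I\|_{L^2(\Gamma^{\re})}\lesssim\re(1+|s|)c_{\mathcal O_0}\mathrm e^{|\Re s|_-R_0}\|f\|_{L^2(\mathcal O_0)}$.

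For $\mathrm{II}$, write $\sigma^{\re}_s:=(\mathbb S^{\re}(s))^{-1}\mathbb 1=\re^{-1}(\mathcal V^{\re})^{-1}(\mathbb S^1(\re s))^{-1}\mathbb 1$ (using $\mathcal V^{\re}\mathbb 1=\mathbb 1$), so $\mathrm{II}=c_0(\sigma^{\re}_s-\sigma^{\re})$ and $\sigma^{\re}_s-\sigma^{\re}=\re^{-1}(\mathcal V^{\re})^{-1}\bigl((\mathbb S^1(\re s))^{-1}-(\mathbb S^1(0))^{-1}\bigr)\mathbb 1$. Since $z\mapsto\mathbb S^1(z)$ is analytic near $0$ and $\mathbb S^1(0)$ is invertible, $z\mapsto(\mathbb S^1(z))^{-1}\mathbb 1\in L^2(\Gamma^1)$ is analytic near $0$, whence $\|\bigl((\mathbb S^1(\re s))^{-1}-(\mathbb S^1(0))^{-1}\bigr)\mathbb 1\|_{L^2(\Gamma^1)}\lesssim|\re s|$ for $|\re s|\le\theta$; combining with $\|\sigma^{\re}_s-\sigma^{\re}\|_{L^2(\Gamma^{\re})}=\re^{-1}\cdot\re\|(\cdots)\mathbb 1\|_{L^2(\Gamma^1)}$ and the pointwise bound $|c_0|\lesssim c_{\mathcal O_0}\mathrm e^{|\Re s|_-R_0}\|f\|_{L^2(\mathcal O_0)}$ (Lemma \ref{lem:rz}) yields $\|\mathrm{II}\|_{L^2(\Gamma^{\re})}\lesssim\re|s|\,c_{\mathcal O_0}\mathrm e^{|\Re s|_-R_0}\|f\|_{L^2(\mathcal O_0)}$. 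Adding $\mathrm I$ and $\mathrm{II}$, and using $|\Re s|_-\re\le c_\Omega$ on $\mathscr D^{\re}$ to clean up the $\mathrm e^{|\Re s|_-\re}$ factors that appear in the derivative bounds, gives the asserted estimate.

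The main obstacle I anticipate is the uniform invertibility input on the fixed boundary: one must pick $\theta$ small enough that the disk $\{|z|\le\theta\}$ avoids the discrete set of frequencies at which the single–layer boundary operator $\mathbb S^1(z)$ fails to be invertible (the spurious interior resonances), and one must verify that the inverse is uniformly bounded on the right Sobolev pair $H^{3/2}\to H^{1/2}$ there — the regularity bump to $H^{3/2}$ being exactly what lets the output land in $L^2(\Gamma^{\re})$, which is forced by the statement. The remaining work — the Taylor estimate, the low–frequency absorption $\re(1+|s|)^2\lesssim 1+|s|$, the higher–derivative analogue of Lemma \ref{lem:rz}, and the bookkeeping of the $\re$-powers generated by $\mathcal V^{\re}$ on each norm — is routine.
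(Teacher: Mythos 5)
Your argument is correct and follows essentially the same route as the paper's proof: rescale to the reference boundary $\Gamma^1$, treat $\mathbb{S}^1(\re s)$ as an analytic $O(|\re s|)$ perturbation of the invertible zero-frequency operator $\mathbb{S}^1(0)$ for $|s\re|\leq\theta$ with $\theta$ small, use $\sigma^1=(\mathbb{S}^1(0))^{-1}\mathbb{1}$, and Taylor-expand $\gamma_0^{\re}\mathcal{R}_0(s)f$ about the origin, cleaning up the $\mathrm{e}^{|\Re s|_{-}\re}$ factors via $|\Re s|_{-}\re\lesssim 1$. The only real difference is the functional setting for the uniform inversion (your acknowledged main obstacle): the paper settles it with explicit Hilbert--Schmidt kernel bounds on $\mathbb{S}(\re s)-\mathbb{S}(0)$ together with Verchota's $L^2(\Gamma)\rightarrow H^1(\Gamma)$ invertibility and a Neumann series, whereas you invoke elliptic pseudodifferential theory on the pair $H^{1/2}(\Gamma^1)\rightarrow H^{3/2}(\Gamma^1)$, which is legitimate here since $\Gamma$ is $C^{\infty}$.
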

\begin{proof}
Let $f\in L^2(\mathcal{O}_0)$. Let us define two boundary densities: 
\begin{align}
\hat{\lambda}^{\re}:=-(\mathbb{S}^{\re})^{-1}\gamma_0^{\re}v\; \text{ and }\; \hat{\lambda}^{\re}_{app}:=-\sigma^{\re}P_{\vec{0}}v, \quad v:=\mathcal{R}_0(s)f.
\end{align}
We start by proving the bound (recall that by elliptic regularity $v\in C^{\infty}(\overline{B_1})$): 
\begin{align}
	\label{eq:first_estimate}
	\|\hat{\lambda}^{\re}-\hat{\lambda}^{\re}_{app}\|_{L^2(\Gamma^{\re})}&\lesssim \re\|\nabla v\|_{L^{\infty}(B_{\re})}+\re \|sv\|_{L^{\infty}(B_{\re})}.
\end{align}
For this we will rewrite $\hat{\lambda}^{\re}$ in a form convenient for comparison with $\hat{\lambda}^{\re}_{app}$. 

Let us define the operator $\mathcal{J}^{\re}: \, L^2(\Gamma^{\re})\rightarrow L^2(\Gamma)$ by $(\mathcal{J}^{\re}\varphi)(\vec{x}):=\varphi(\re\vec{x})$, $\vec{x}\in \Gamma$. 
Remark that $\mathcal{J}^{\re}$ is obviously invertible and satisfies 
\begin{align}
	\label{eq:jre}
	\|\mathcal{J}^{\re}\varphi\|_{L^2(\Gamma)}=\re^{-1}\|\varphi\|_{L^2(\Gamma^{\re})}.
\end{align}
Recalling that 
$
	(\mathbb{S}^{\re}(s)\varphi)(\bx)=\int_{\Gamma^{\re}}\frac{\mathrm{e}^{-s\|\bx-\by\|}}{4\pi\|\bx-\by\|}\varphi(\by)d\Gamma_{\by}, \quad \bx\in \Gamma^{\re},
$
we rewrite, using the scaling argument,
\begin{align*}
\mathbb{S}^{\re}(s)\hat{\lambda}^{\re}=-\gamma_0^{\re}v\iff\re \mathbb{S}^{1}(\re s)\Lambda^{\re}=-\mathcal{J}^{\re}(	\gamma_0^{\re}v), \quad \Lambda^{\re}=\mathcal{J}^{\re}\hat{\lambda}^{\re}.
\end{align*}
The above further rewrites, with $\mathbb{S}(s):=\mathbb{S}^1(s)$ and $\mathbb{S}_0:=\mathbb{S}(0)$:
\begin{align}
\label{eq:csrho}
\re\left(\mathbb{S}_0\hat{\Lambda}^{\re}+\mathbb{E}(s\re)\hat{\Lambda}^{\re}\right)=-\mathcal{J}^{\re}\gamma_0^{\re}v,\quad \mathbb{E}(s)=\mathbb{S}(s)-\mathbb{S}(0)=\mathbb{S}(s)-\mathbb{S}_0.
\end{align}
By the standard result about the norm of the Hilbert-Schmidt operator \cite[p.267]{conway}
\begin{align*}
	\|\mathbb{E}(s\re)\|_{L^2(\Gamma)\rightarrow L^2(\Gamma)}^2\leq \iint_{\Gamma\times \Gamma}\left|\frac{\mathrm{e}^{-s\re\|\bx-\by\|}-1}{4\pi\|\bx-\by\|}\right|^2 d\Gamma_{\bx}d\Gamma_{\by}\leq C_{\Gamma}\mathrm{e}^{4\theta}|s\re|^2,
\end{align*}
where in the last inequality we employed the mean-value theorem  $ \left|\mathrm{e}^{-s\re\|\bx-\by\|}-1\right|\leq |s\re|\mathrm{e}^{|s\re|\operatorname{diam}\Gamma}\|\bx-\by\|$ and $|s\re|\operatorname{diam}\Gamma\leq 2\theta$. For $\varphi\in L^2(\Gamma)$,  
\begin{align*}
(\nabla \mathbb{E}(s\re)\varphi)(\vec{x})&=\int_{\Gamma }\frac{(\mathrm{e}^{-s\re\|\bx-\by\|}-1)(\vec{y}-\vec{x})}{4\pi\|\bx-\by\|^3}\varphi(\by)d\Gamma_{\by}\\
&+s\re\int_{\Gamma}\frac{\mathrm{e}^{-s\re\|\bx-\by\|}(\vec{y}-\vec{x})}{4\pi\|\bx-\by\|^2}\varphi(\by)d\Gamma_{\by}.
\end{align*}
Again using the mean-value theorem to estimate the first integral, 
\begin{align*}
	\|\nabla \mathbb{E}(s\re)\varphi\|_{L^2(\Gamma)}^2\lesssim
\int_{\Gamma}\left(\int_{\Gamma}
	 \frac{\mathrm{e}^{\theta \operatorname{diam}\Gamma} |s\re|}{4\pi\|\bx-\by\|}|\varphi(\by)|d\Gamma_{\by}\right)^2d\Gamma_{\bx}\lesssim \mathrm{e}^{4\theta}|s\re|^2\|\mathbb{S}_0 |\varphi|\,\|_{L^2(\Gamma)}^2.
\end{align*}
By \cite{verchota}, $\mathbb{S}_0: \, L^2(\Gamma)\rightarrow H^1(\Gamma)$ is bounded and invertible. Therefore, 
\begin{align}
	\label{eq:ab_bds}
(a)\quad\|\mathbb{E}(s\re)\|_{L^2\rightarrow H^1}\lesssim \mathrm{e}^{2\theta}|s\re| \quad \text{ and }\quad (b)\quad \|\mathbb{S}^{-1}_0\mathbb{E}(s\re)\|_{L^2\rightarrow L^2}\lesssim \mathrm{e}^{2\theta}|s\re|,
\end{align}
with hidden constants independent of $\theta$, $s$, $\re$. 
 By the Neumann series argument applied to \eqref{eq:csrho}, and \eqref{eq:ab_bds}(b), choosing $\theta$ sufficiently small (recall $|s\re|<\theta$) yields the following expansion:
\begin{align*}
	&\hat{\Lambda}^{\re}=-\re^{-1}\mathbb{S}^{-1}_0\mathcal{J}^{\re}\gamma_0^{\re}v+\re^{-1}R^{\re},\text{ with }\\
	&\|R^{\re}\|_{L^2(\Gamma)}\lesssim \|\mathbb{S}_0^{-1}\mathbb{E}(s\re)\|_{L^2\rightarrow L^2}\|\mathcal{J}^{\re}\gamma_0^{\re}v\|_{L^2(\Gamma)}\overset{\eqref{eq:jre}}{\lesssim} |s|\|\gamma_0^{\re}v\|_{L^2(\Gamma^{\re})}\lesssim |s\re|\|\gamma_0^{\re}v\|_{L^{\infty}(\Gamma^{\re})}.
	\end{align*}
To compare the above expression to $\hat{\lambda}^{\re}_{app}=-\sigma^{\re}v(s,\vec{0})$, we rewrite further
\begin{align*}
\hat{\Lambda}^{\re}(s)&=-\re^{-1}\mathbb{S}^{-1}_0v(s,\vec{0})+\re^{-1}\mathbb{S}_0^{-1}\left(v(s,\vec{0})-\mathcal{J}^{\re}\gamma_0^{\re}v(s)\right)+\re^{-1}R^{\re}(s).
\end{align*}
Recall that $\mathbb{S}^{-1}_0 1=\sigma^1$ (cf. the identity after \eqref{eq:as1}), and, with the scaling argument, $\mathcal{J}^{\re}\hat{\lambda}^{\re}_{app}=-v(s,\vec{0})\re^{-1}\sigma^{1}$. Therefore, by \eqref{eq:jre},  
\begin{align*}
	\|\hat{\lambda}^{\re}-\hat{\lambda}^{\re}_{app}\|_{L^2(\Gamma^{\re})}&\leq  \|\mathbb{S}_0^{-1}\left(v(s,\vec{0})-\mathcal{J}^{\re}\gamma_0^{\re}v\right)\|_{L^2(\Gamma)}+\|R^{\re}\|_{L^2(\Gamma)}\\
	&\lesssim  \|v(s,\vec{0})-\mathcal{J}^{\re}\gamma_0^{\re}v\|_{L^2(\Gamma)}+\re\|s\gamma_0^{\re}v\|_{L^{\infty}(\Gamma^{\re})}\\
	&= \re^{-1}\|v(s,\vec{0})-\gamma_0^{\re}v\|_{L^2(\Gamma^{\re})}+\re\|s\gamma_0^{\re}v\|_{L^{\infty}(\Gamma^{\re})}\\
	&\lesssim \re\|\nabla v\|_{L^{\infty}(B_{\re})}+\re \|sv\|_{L^{\infty}(B_{\re})},
\end{align*}
i.e. \eqref{eq:first_estimate}, where we use that $v$ is $C^{\infty}(\overline{B}_1)$. Replacing $v=\mathcal{R}_0(s)f$, it remains to obtain a bound in the rhs explicit in $\|f\|_{L^2(\mathcal{O}_0)}$. We conclude by applying Lemma \ref{lem:rz}, the inequality $\Re s>-c_{\Omega}\re^{-1}$ and using $c_{\mathcal{O}_0}=R_0^{3/2}\max(1, r_0^{-2})$.  
\end{proof}
\begin{lemma}
	\label{lem:dif_operators}
There exists $\re_0$, s.t. for all $0<\re<\re_0$, $s\in \mathbb{C}$, $R>r>1$, it holds 
	\begin{align*}
			\|\mathcal{S}^{\re}-\mathcal{S}^{\re}_{app}\|_{L^2(\Gamma^{\re})\rightarrow L^{2}(B_{r,R})}&\leq C\re^2 R^{3/2}\mathrm{e}^{|\Re s|_{-}(R+\re)}r^{-1}(|s|+r^{-1}),\\
			\|\mathcal{S}^{\re}\|_{L^2(\Gamma^{\re})\rightarrow L^2(B_{r,R})}&\leq C\re R^{3/2}r^{-1}\mathrm{e}^{|\Re s|_{-}(R+\re)}.
	\end{align*}
\end{lemma}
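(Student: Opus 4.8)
The plan is to work directly with the explicit integral kernels. Writing $g_s(\bx,\by):=\frac{\mathrm{e}^{-s\|\bx-\by\|}}{4\pi\|\bx-\by\|}$, the operator $\mathcal{S}^{\re}(s)$ acts by the kernel $g_s(\bx,\by)$, $\bx\in\Omega^{\re,c}$, $\by\in\Gamma^{\re}$, while $\mathcal{S}^{\re}_{app}(s)$ acts by the degenerate kernel $g_s(\bx,\vec{0})$ (the correct reading of its definition is that the inner integral is $\int_{\Gamma^{\re}}\varphi(\by)\,d\Gamma_{\by}$, i.e. $\mathcal{S}^{\re}_{app}$ is literally the "$\by$ frozen at the centre" version of $\mathcal{S}^{\re}$; the $\varphi(\bx)\,d\Gamma_{\bx}$ in the displayed formula is a harmless typo). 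Since we only ever evaluate at $\bx\in B_{r,R}$ with $r>1$ and integrate over $\by\in\Gamma^{\re}\subset\overline{B_{\re}}$, the kernels are smooth and bounded there, so both estimates reduce to a pointwise bound on the (difference of) kernels followed by crude volume/Cauchy--Schwarz arguments.

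First I would establish the pointwise bounds. For $\bx\in B_{r,R}$, $\by\in\Gamma^{\re}$ and $t\in[0,1]$ one has $r-\re\le\|\bx-t\by\|\le R+\re$ and $|\mathrm{e}^{-s\|\bx-t\by\|}|\le\mathrm{e}^{|\Re s|_{-}(R+\re)}$, so immediately $|g_s(\bx,\by)|\lesssim\mathrm{e}^{|\Re s|_{-}(R+\re)}(r-\re)^{-1}$. For the difference I would use a first-order Taylor expansion in $\by$ about the origin, $g_s(\bx,\by)-g_s(\bx,\vec{0})=\int_0^1\nabla_{\by}g_s(\bx,t\by)\cdot\by\,dt$, valid because $\|\bx\|>1>\re\ge\|\by\|$, together with $|\nabla_{\by}g_s(\bx,\vec{z})|\le\frac{|\mathrm{e}^{-s\|\bx-\vec{z}\|}|}{4\pi}\big(\frac{|s|}{\|\bx-\vec{z}\|}+\frac{1}{\|\bx-\vec{z}\|^{2}}\big)$; evaluating on the segment and using $\|\by\|\le\re$ gives $|g_s(\bx,\by)-g_s(\bx,\vec{0})|\lesssim\re\,\mathrm{e}^{|\Re s|_{-}(R+\re)}\frac{|s|+(r-\re)^{-1}}{r-\re}$. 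Choosing $\re_0$ small enough that $r-\re\ge r/2$ for every $r>1$ turns $(r-\re)^{-1}$ into $r^{-1}$ up to a constant, yielding the kernel bounds $\lesssim\re\,\mathrm{e}^{|\Re s|_{-}(R+\re)}r^{-1}(|s|+r^{-1})$ and $\lesssim\mathrm{e}^{|\Re s|_{-}(R+\re)}r^{-1}$ respectively.

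Then I would convert these into operator bounds. For $\varphi\in L^2(\Gamma^{\re})$ and $\bx\in B_{r,R}$, $|((\mathcal{S}^{\re}-\mathcal{S}^{\re}_{app})\varphi)(\bx)|\le\big(\sup_{\by\in\Gamma^{\re}}|g_s(\bx,\by)-g_s(\bx,\vec{0})|\big)\|\varphi\|_{L^1(\Gamma^{\re})}$, and Cauchy--Schwarz together with $|\Gamma^{\re}|\lesssim\re^2$ gives $\|\varphi\|_{L^1(\Gamma^{\re})}\lesssim\re\|\varphi\|_{L^2(\Gamma^{\re})}$; this produces an $L^\infty(B_{r,R})$-bound with one extra power of $\re$. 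Multiplying by $|B_{r,R}|^{1/2}\lesssim R^{3/2}$ to pass to $L^2(B_{r,R})$ yields exactly $\|\mathcal{S}^{\re}-\mathcal{S}^{\re}_{app}\|_{L^2(\Gamma^{\re})\to L^2(B_{r,R})}\lesssim\re^{2}R^{3/2}r^{-1}(|s|+r^{-1})\mathrm{e}^{|\Re s|_{-}(R+\re)}$, and the same chain applied to the bound on $g_s$ itself gives the second estimate, with one factor of $\re$ fewer lost.

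There is no real obstacle here: the estimate is essentially a one-line Taylor expansion of the Helmholtz fundamental solution in the source variable, far from its singularity. The only things requiring care are the bookkeeping that lets $(r-\re)^{-1}$ be replaced by $r^{-1}$ (handled by smallness of $\re_0$ and $r>1$), correctly identifying $\mathcal{S}^{\re}_{app}$ as the frozen-source version of $\mathcal{S}^{\re}$ so that the comparison is meaningful, and keeping the exponential factors consistent, all of which are routine.
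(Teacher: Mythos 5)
Your proposal is correct and follows essentially the same route as the paper: a mean-value/Taylor bound on the kernel difference $\frac{\mathrm{e}^{-s\|\bx-\by\|}}{4\pi\|\bx-\by\|}-\frac{\mathrm{e}^{-s\|\bx\|}}{4\pi\|\bx\|}$ in the source variable, Cauchy--Schwarz over $\Gamma^{\re}$ (yielding the extra power of $\re$ from $\|\by\|\leq\re$ and $|\Gamma^{\re}|\lesssim\re^2$), and the factor $R^{3/2}$ from passing from $L^{\infty}(B_{r,R})$ to $L^2(B_{r,R})$. Your handling of $(r-\re)^{-1}\lesssim r^{-1}$ via smallness of $\re_0$ and your reading of the typo in the definition of $\mathcal{S}^{\re}_{app}$ are both consistent with the paper's argument.
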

\begin{proof}	
We start by proving the following bound:
	\begin{align*}
			\|\mathcal{S}^{\re}-\mathcal{S}^{\re}_{app}\|_{L^2(\Gamma^{\re})\rightarrow L^{\infty}(B_{r,R})}&\leq C\re \mathrm{e}^{|\Re s|_{-}(R+\re)}r^{-1}(|s|+r^{-1}).
	\end{align*}
Using explicit definitions of the operators and the Cauchy-Schwarz inequality yields
	\begin{align*}				\|&\mathcal{S}^{\re}-\mathcal{S}^{\re}_{app}\|_{L^2(\Gamma^{\re})\rightarrow L^{\infty}(B_{r,R})}^2\leq  		
		 \sup_{\bx\in B_{r,R}}\int_{\Gamma^{\re}}\left|\frac{\mathrm{e}^{-s\|\bx-\by\|}}{4\pi\|\bx-\by\|}-\frac{\mathrm{e}^{-s\|\bx\|}}{4\pi\|\bx\|}\right|^2d\Gamma_{\re}\\
		&\leq \sup_{\bx\in B_{r,R}}\sup_{\by\in B_{\re}}\|\nabla_{\by} \frac{\mathrm{e}^{-s\|\bx-\by\|}}{4\pi\|\bx-\by\|}\|^2\int_{\Gamma^{\re}}\|\by\|^2d\Gamma_{\by}\lesssim \re^4 \frac{\mathrm{e}^{2|\Re s|_{-}(R+\re)}}{r^2}\left(|s|+r^{-1}\right)^2.
	\end{align*}	
The conclusion of the lemma follows from  $\|.\|_{L^2(B_{r,R})}\lesssim R^{3/2} \|.\|_{L^{\infty}(B_{r,R})}$.

The proof of the bound on $\mathcal{S}^{\re}$ follows the same ideas, thus we omit it. 
\end{proof}
Now we have all the ingredients to prove Proposition \ref{prop:lf}.  
\begin{proof}[Proof of Proposition \ref{prop:lf}]
Applying the triangle inequality to  \eqref{eq:uscre} yields
\begin{align*}
	\|\mathcal{E}^{\re}&\|_{L^2(\mathcal{O}_0)\rightarrow L^2(B_{r,R})}\leq \|\mathcal{S}^{\re}_{app}-\mathcal{S}^{\re}\|_{L^2(\Gamma^{\re})\rightarrow L^2(B_{r,R})}	\|\sigma^{\re}P_{\vec{0}}\mathcal{R}_0(s)\|_{L^2(\mathcal{O}_0)\rightarrow L^2(\Gamma^{\re})}\\
	&+\|\mathcal{S}^{\re}\|_{L^2(\Gamma^{\re})\rightarrow L^2(B_{r,R})}	\|(\mathbb{S}^{\re}(s))^{-1}\gamma_0^{\re}\mathcal{R}_0(s)-\sigma^{\re}P_{\vec{0}}\mathcal{R}_0(s)\|_{L^2(\mathcal{O}_0)\rightarrow L^2(\Gamma^{\re})}.
\end{align*}
The statement follows by combining the bounds of Lemmas \ref{lem:err_lmbd}, \ref{lem:dif_operators} and \ref{lem:estsigma},  using $|\Re s|_{-}\re<c_{\Omega}$ and definitions of $C_{r,R}$ and $C_{\mathcal{O}_0}$ from Proposition \ref{prop:bounds}.
\end{proof}
\subsection{Proof of the time domain bounds}
\label{sec:fin_proof}
We recall an explicit expression of the error between the scattered and the incident fields \eqref{eq:ere}:
\begin{align}
	\label{eq:ere2}
	e^{\re}(t)&=\frac{1}{2\pi i}\int_{\Re s=\mu}\frac{\mathrm{e}^{st}}{(\mu_*-s)^p}(\mathcal{D}^{\re}_{app}(s)-\mathcal{D}^{\re}_D(s))(su_0^p+v_0^p)ds,
\end{align}
where $p\geq 2, \, 0<\mu<\mu_*.$
We have in particular, with $f_p(s):=su_0^p+v_0^p$, 
\begin{align*}		\mathbb{1}_{K_{\operatorname{ff}}}e^{\re}(t)&=\frac{1}{2\pi i}\int_{\Re s=\mu}\frac{\mathrm{e}^{st}}{(\mu_*-s)^p}\mathbb{1}_{K_{\operatorname{ff}}}(\mathcal{D}^{\re}_{app}(s)-\mathcal{D}^{\re}_{D}(s))f_p(s) ds.
\end{align*}
We deform the contour to $\mu=0$ (justified by Proposition \ref{prop:holomorph} and Proposition \ref{prop:bounds}), fix $\theta>0$ sufficiently small so that Proposition \ref{prop:lf} can be applied, and split
\begin{align*}		\mathbb{1}_{K_{\operatorname{ff}}}e^{\re}(t)&=\frac{1}{2\pi }\int_{-\theta/\re}^{\theta/ \re}\frac{\mathrm{e}^{-i\omega t}}{(\mu_*+i\omega)^p}\mathbb{1}_{K_{\operatorname{ff}}}(\mathcal{D}^{\re}_{app}(-i\omega)-\mathcal{D}^{\re}_{D}(-i\omega))f_p(-i\omega) d\omega\\
	&+\frac{1}{2\pi }\int_{|\omega|>\theta/\re}\frac{\mathrm{e}^{-i\omega t}}{(\mu_*+i\omega)^p}\mathbb{1}_{K_{\operatorname{ff}}}(\mathcal{D}^{\re}_{app}(-i\omega)-\mathcal{D}^{\re}_{D}(-i\omega))f_p(-i\omega) d\omega.
\end{align*}
Applying the triangle inequality to both integrals, the bound of Proposition \ref{prop:lf} to the first integral and of  Proposition \ref{prop:hf} with $\ell=1$ to the second integral yields
\begin{align*}
	\|&e^{\re}(t)\|_{L^2(K_{\operatorname{ff}})}\lesssim \re^2C_{r_{\operatorname{ff}}, R_{\operatorname{ff}}}c_{\mathcal{O}_0}\left( \int_{0}^{\frac{\theta}{\re}}\frac{1}{\mu_*^p+\omega^p}(1+\omega+r_{\operatorname{ff}}^{-1})(\omega\|u_0^p\|+\|v_0^p\|)d\omega\right.\\
	&+\left.\int_{\frac{\theta}{\re}}^{+\infty}\frac{\omega}{\mu_*^p+\omega^p}(1+\omega\re)^3(\omega\|u_0^p\|+\|v_0^p\|)d\omega\right)\lesssim \re^2C_{r_{\operatorname{ff}}, R_{\operatorname{ff}}}\max(1, r_{\operatorname{ff}}^{-1})c_{\mathcal{O}_0}E^{p}_0,
\end{align*}
whenever $p>6$. The conclusion follows with Remark \ref{rem:bmap} and $p=k_{reg}\geq 7$.

\bibliographystyle{plain}

\bibliography{references}

\end{document}